\title{On o-minimal homotopy groups}
\author{El\'ias Baro \thanks{Partially supported by
GEOR MTM2005-02568}\\
Departamento de Matem\'aticas\\ Universidad Aut\'onoma de Madrid\\ 28049 Madrid, Spain \and Margarita Otero\thanks{Partially supported by
GEOR MTM2005-02568 and Grupos UCM 910444}\\ Departamento de Matem\'aticas\\ Universidad Aut\'onoma de Madrid\\28049 Madrid, Spain}
\newtheorem{deff}{Definition}[section]
\newtheorem{teo}[deff]{Theorem}
\newtheorem{lema}[deff]{Lemma}
\newtheorem{cor}[deff]{Corollary}
\newtheorem{prop}[deff]{Proposition}
\newtheorem{fact}[deff]{Fact}
\theoremstyle{definition}
\newtheorem{proper}[deff]{Properties}
\newtheorem{obs}[deff]{Remark}
\newcommand{\V}{\textrm{Vert}}
\newcommand{\Vc}{\emph{Vert}}
\newcommand{\St}{\textrm{St}}
\newcommand{\Stc}{\emph{St}}
\newcommand{\I}{\textrm{int}}
\newcommand{\co}{\textrm{co}}
\newcommand{\R}{\mbox{${\mathbb  R}$}}                  
\newcommand{\bt}{\begin{theorem}}
\newcommand{\et}{\end{theorem}}
\newcommand{\bl}{\begin{lemma}}
\newcommand{\el}{\end{lemma}}
\newcommand{\bexa}{\begin{example}}
\newcommand{\eexa}{\end{example}}
\newcommand{\bexe}{\begin{exercise}}
\newcommand{\eexe}{\end{exercise}}
\newcommand{\bprop}{\begin{proposition}}
\newcommand{\eprop}{\end{proposition}}
\newcommand{\bp}{\begin{proof}}
\newcommand{\ep}{\end{proof}}
\newcommand{\bc}{\begin{corollary}}
\newcommand{\ec}{\end{corollary}}
\newcommand{\bd}{\begin{definition}}
\newcommand{\ed}{\end{definition}}
\newcommand{\br}{\begin{remark}}
\newcommand{\er}{\end{remark}}
\begin{document}
\begin{center}ON O-MINIMAL HOMOTOPY GROUPS\end{center}
\begin{center}\begin{footnotesize}EL\'IAS BARO \footnote{Partially supported by
GEOR MTM2005-02568.}\begin{scriptsize} AND  \end{scriptsize}MARGARITA OTERO\footnote{Partially supported by
GEOR MTM2005-02568 and Grupos UCM 910444.

\textit{Date}: August 6, 2008

\textit{Mathematics Subject Classification 2000}: 03C64, 14P10, 55Q99.}
\end{footnotesize}\end{center}
\begin{quote}
\begin{footnotesize}\begin{scriptsize}ABSTRACT\end{scriptsize}. We work over an o-minimal expansion of a real closed field. The o-minimal homotopy
groups of a definable set are defined naturally using definable continuous maps. We prove that any two semialgebraic maps which are definably homotopic are also semialgebraically homotopic. This result together with the study of semialgebraic homotopy done by H. Delfs and M. Knebusch allows us to develop an o-minimal homotopy theory. In particular, we obtain o-minimal versions of the Hurewicz theorems and the Whitehead theorem.
\end{footnotesize} 
\end{quote}

\section{Introduction}\label{sintro}
Many  aspects of  o-minimal topology have been studied in the last years. However, there has been a lack of development of o-minimal homotopy \linebreak --only the first homotopy group was considered--. The first aim of this paper is to fill this gap. On the other hand, a much more complete development of semialgebraic homotopy theory was carried out by H. Delfs and  M. Knebusch in \cite{85DK}. 
 
Let $\mathcal{R}$ be  an o-minimal expansion of a real closed field.  We shall prove that  any two semialgebraic maps which are definably homotopic are also semialgebraically homotopic (see Theorem \ref{teo:princi} for the full statement). To do this, we will follow the scheme of the proofs of the results in \cite{85DK},  however the core of their proofs cannot be adapted to our context since they make use of both the polynomial description of semialgebraic sets and the Lebesgue number, which are not available in the o-minimal setting. Instead,  we use the results on normal triangulations in o-minimal structures obtained in \cite{07pB}.  
 
By applying both Theorem \ref{teo:princi} and semialgebraic homotopy, we are able to develop o-minimal homotopy. In section \ref{shomotgr}, the (higher) homotopy groups are defined and their usual propeties are  proved. We also prove the following transfer result (Corollary \ref{cor:homtopiadefigualtopo}):   if $X$ is a semialgebraic set defined without parameters  and $X(\R)$ is its realization over the reals, then for each $n\ge 1$, the $n$th  o-minimal homotopy group of $X$ is isomorphic to the (classical) $n$th homotopy group of $X(\R)$ --this was done for the case $n=1$ in \cite{02BeO}--. \linebreak In section \ref{shurewicz} we prove  the o-minimal versions of Hurewicz theorems and  Whitehead theorem, which were the motivation for this paper.
 
M. Shiota has announced some unpublished related results also linking the semialgebraic and the o-minimal topology (personal communication).

The results of this paper are part of the first author's Ph.D. dissertation.

\section{Preliminaries}\label{sprelim}For the rest of the paper we fix an o-minimal expansion $\mathcal{R}$ of a real closed field $R$. We always take 'definable' to mean 'definable in $\mathcal{R}$ with parameters'. We take the order topology on $R$ and the product topology on $R^n$ for $n>1$. All maps are assumed to be continuous. 

Given a definable set $S$ and some definable subsets $S_1,\ldots,S_l$ of $S$ we say that $(K,\phi)$ is a triangulation in $R^p$ of $S$ partitioning $S_1,\ldots,S_l$ if $K$ is a simplicial complex formed by a finite number of (open) simplices in $R^p$ and $\phi:|K|\rightarrow S$ is a definable homeomorphism  such that each $S_i$ is the union of the images by $\phi$ of some simplices of $K$. Recall that given a subset $A$ of $|K|$, the star of $A$ in $K$, denoted by $\St_{K}(A)$, is the union of all the simplices $\sigma\in K$ such that $\overline{\sigma}\cap A \neq \emptyset$. 
If a set $X$ is definable with parameters in some structure $\mathcal{M}$ we denote by $X(M)$ the realization of $X$ in $\mathcal{M}$.

\begin{lema}[\textbf{o-minimal homotopy extension lemma}]\label{prop:ext.homotopia}Let $X$, $Z$ and $A$ be definable sets with $A\subset X$ closed in $X$. Let
$f:X\rightarrow Z$ be a definable map and $H:A\times I \rightarrow Z$ a definable homotopy such that $H(x,0)=f(x)$, $x\in A$. Then there exists a definable homotopy $G:X\times I \rightarrow Z$ such that $G(x,0)=f(x)$, $x\in X$, and $G|_{A\times I}=H$.
\end{lema}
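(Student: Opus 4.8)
The plan is to reduce the statement, via triangulation, to the construction of a definable retraction of a cylinder onto an ``$L$-shaped'' subset, and then to pull everything back. First I would triangulate: by the o-minimal triangulation theorem there is a triangulation $(K,\phi)$ of $X$ partitioning $A$, so that $\phi\colon |K|\to X$ is a definable homeomorphism and $A=\phi(|L|)$ for the subfamily $L=\{\sigma\in K:\phi(\sigma)\subseteq A\}$ (if $X$ is unbounded I would first compose with a semialgebraic homeomorphism onto a bounded set). Since $A$ is closed in $X$ and $\phi$ is a homeomorphism, $|L|=\phi^{-1}(A)$ is closed in $|K|$; as $|L|$ is a union of simplices of $K$, this forces $L$ to contain every face, occurring in $K$, of any of its simplices, so $(|K|,|L|)$ is a genuine simplicial pair. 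Transporting the data through $\phi$, it then suffices to extend $\widetilde f=f\circ\phi\colon|K|\to Z$ along the homotopy $\widetilde H=H\circ(\phi|_{|L|}\times \mathrm{id})\colon|L|\times I\to Z$.

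The key reduction is the standard retraction argument. Put $N=(|K|\times\{0\})\cup(|L|\times I)$. The maps $\widetilde f$ (on the bottom) and $\widetilde H$ (on the side over $|L|$) agree on the overlap $|L|\times\{0\}$, because $H(x,0)=f(x)$ for $x\in A$; hence they glue to a single definable map $\widehat G\colon N\to Z$. If I can produce a definable retraction $r\colon |K|\times I\to N$, then $\widehat G\circ r$ extends $\widetilde f$ compatibly with $\widetilde H$, and $G=\widehat G\circ r\circ(\phi^{-1}\times\mathrm{id})$ is the desired homotopy: one checks directly that $G(\cdot,0)=f$ and that $G|_{A\times I}=H$, the latter because $r$ restricts to the identity on $|L|\times I\subseteq N$.

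The heart of the argument, and the step I expect to be the main obstacle, is the construction of the definable retraction $r$. I would build it by induction on the dimension of the simplices of $K$, treating each prism $\overline{\sigma}\times I$ separately: on simplices of $L$ the retraction is the identity, while on a simplex $\sigma\notin L$ I would use a radial projection of $\overline{\sigma}\times I$, from a suitable apex placed above $\sigma$, onto the union of its bottom face with the prisms over those faces of $\sigma$ lying in $L$, and then compose with the retraction already defined on that lower-dimensional part. Radial projections are semialgebraic, hence definable, and they restrict to the identity along the faces where the pieces must agree, so the local definitions assemble into one continuous definable map. The delicate points are the choice of apex ensuring that the projection lands inside $N$, so that the composition with the previously defined retraction is meaningful, and the fact that o-minimal triangulations consist of \emph{open} simplices which need not contain all of their faces; I would need to verify that the projection never sends a point of $|K|\times I$ onto a missing face, which is precisely what the relative face-closedness of $L$ together with the careful apex placement are designed to guarantee.
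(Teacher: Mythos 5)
Your overall scheme coincides with the paper's: triangulate $X$ partitioning $A$, glue $f$ and $H$ into a single definable map on the L-shaped set $N=(|K|\times\{0\})\cup(|K_A|\times I)$, and compose with a definable retraction $r\colon |K|\times I\to N$. The only real difference is how the retraction is obtained: the paper imports it from the semialgebraic retraction theorem (Theorem 5.1 of \cite{84DK}), which applies to an arbitrary semialgebraic set with a closed semialgebraic subset, whereas you propose to build it by hand, prism by prism, via radial projections.

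That hand construction is where the gap lies, and it sits exactly at the point you yourself flag as delicate. Since $X$ is an arbitrary definable set, the complex $K$ produced by the Triangulation Theorem is in general not closed: a simplex $\sigma\in K$ may have faces $\tau\subset\overline{\sigma}$ with $\tau\cap|K|=\emptyset$. The radial projection of $\overline{\sigma}\times I$ from an apex above $\hat{\sigma}$ onto $(\overline{\sigma}\times\{0\})\cup(\partial\sigma\times I)$ necessarily sends points $(x,1)$ with $x\in\sigma$ sufficiently close to such a $\tau$ into $\tau\times I$; already for $\sigma=(0,1)$ with missing vertex $1$ and apex $(1/2,2)$, every $(x,1)$ with $x>3/4$ lands on $\{1\}\times I$. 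No placement of the apex above $\sigma$ avoids this, and the relative closedness of $K_A$ in $K$ is irrelevant to it, since the problem concerns faces missing from $K$ altogether rather than faces of $K_A$. Consequently the image of your projection leaves $|K|\times I$, the retraction already constructed on the lower skeleton is undefined there, and the inductive composition breaks down. Your argument is complete only when $X$ is closed and bounded (so that $K$ may be taken to be a closed complex), but the lemma is needed --- and is used later in the paper, e.g.\ in the proof of Corollary \ref{cor:princi} --- precisely without that hypothesis. The repair is either to quote the Delfs--Knebusch retraction theorem, as the paper does, or to reproduce its genuinely more involved treatment of the non-closed case.
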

\begin{proof}Let $(K,\phi)$ be a triangulation of $X$ partitioning $A$ and let $K_{A}=\{\sigma \in K: \phi(\sigma)\subset A \}$.
Observe that $|K_{A}|$ is closed in $|K|$. By  Theorem 5.1 in \cite{84DK}, there exists a semialgebraic retract $r:|K| \times I
\rightarrow (|K_{A}|\times I) \cup (|K| \times \{0\})$. This retract naturally induces a definable retract $r':X \times I \rightarrow (A \times I) \cup (X \times
\{0\})$. Let $H':(A\times
I) \cup (X \times \{0\})\rightarrow Z$ be the following definable map 
\begin{displaymath}H'(x,t)=\left\{
\begin{array}{l}
H(x,t) \text{  \  \ for all  \ \ } (x,t)\in A\times I,\\
f(x) \text{  \  \ for all  \ \ } (x,0)\in X\times \{0\}.\\
\end{array}\right.
\end{displaymath}
Then $G=H'\circ r'$ is the required homotopy.
\end{proof}
Recall that given a definable map $f:|K|\rightarrow |L|$ between the realizations of two simplicial complexes $K$ and $L$, with $K$ closed, we say that a simplicial map $g:|K|\rightarrow |L|$ is \textit{a simplicial approximation to $f$} if $f(\Stc_{K}(w))\subset \Stc_{L}(g(w))$ for each $w\in \Vc(K)$. Note that if a simplicial complex $L$ is the first barycentric subdivision of another one then every simplex of $\overline{L}$ whose vertices lies in $L$ is a simplex of $L$. As in the classical case of closed simplices we obtain the following.
\begin{obs}\label{rmk:simplicialapprox}Let $K$ and $L$ be simplicial complexes, with $K$ closed, and let $f:|K|\rightarrow |L|$ be a definable map.\\
(i) If $g:|K| \rightarrow |L|$ is a simplicial approximation to $f$ then $f$ and $g$ are canonically definably homotopic via the map $(x,s)\mapsto (1-s)f(x)+sg(x)$ for all $(x,s)\in |K|\times I$.\\
(ii) If $f$ satisfies the \emph{star condition}, i.e, there is $\varphi:\Vc(K)\rightarrow \Vc(L)$ such that  $f(\Stc_{K}(v))\subset \Stc_{L}(\varphi(v))$ for every vertex $v\in \Vc(K)$ and moreover $L$ is the first barycentric subdivision of some simplicial complex, then $f$ has a simplicial approximation, namely, the simplicial map induced by $\varphi$.
\end{obs}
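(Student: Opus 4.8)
Both parts are proved by transferring the classical piecewise-linear arguments; the point to keep in mind throughout is that barycentric coordinates, convexity and the notion of the carrier (the unique open simplex containing a given point) are semialgebraic, and therefore behave over $R$ exactly as over $\R$. The engine of the whole proof is the following elementary observation, which the plan is to isolate first. Fix a point $x$ lying in the open simplex $\sigma=\langle w_0,\dots,w_k\rangle$ of $K$. Since each $w_i\in\overline\sigma$ we have $\sigma\subset\St_{K}(w_i)$, hence $x\in\St_{K}(w_i)$ for every $i$. Suppose now that $u_0,\dots,u_k\in\V(L)$ satisfy $f(\St_{K}(w_i))\subset\St_{L}(u_i)$. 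Then $f(x)\in\bigcap_i\St_{L}(u_i)$. Let $\rho$ be the carrier of $f(x)$ in $L$. For each $i$, the relation $f(x)\in\St_{L}(u_i)$ means $f(x)$ lies in some simplex of $L$ whose closure contains $u_i$; by uniqueness of the carrier this simplex is $\rho$, so $u_i\in\overline\rho$, and since $u_i$ is a vertex of $L$ this forces $u_i\in\V(\rho)$. Thus all the $u_i$ are vertices of one and the same simplex $\rho$ of $L$.

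For (i) I would apply this with $u_i=g(w_i)$, which is legitimate because $g$ is a simplicial approximation. Writing $x=\sum_i\lambda_i w_i$ with $\lambda_i>0$, and using that $g$ is affine on $\overline\sigma$, we get $g(x)=\sum_i\lambda_i g(w_i)\in\overline\rho$, while $f(x)\in\rho$. Because $f(x)$ lies in the open simplex $\rho$, every point $(1-s)f(x)+sg(x)$ with $s<1$ has all its $\rho$-barycentric coordinates strictly positive and hence lies in $\rho$, while the endpoint $g(x)$ lies in $|L|$ as $g$ is simplicial; so the straight segment stays inside $|L|$ and the prescribed formula defines a map $|K|\times I\to|L|$. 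It is continuous and, since $f$, $g$ and the field operations of $\mathcal R$ are definable, it is definable; as it restricts to $f$ at $s=0$ and to $g$ at $s=1$ it is the desired homotopy. Note that no hypothesis on $L$ is needed here.

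For (ii) I would first check that $\varphi$ induces a simplicial map. Given a simplex $\sigma=\langle w_0,\dots,w_k\rangle$ of $K$, choose any $x\in\sigma$ and apply the observation with $u_i=\varphi(w_i)$ (this is exactly the star condition); the carrier $\rho$ of $f(x)$ then has all the $\varphi(w_i)$ among its vertices. Hence the distinct vertices among $\varphi(w_0),\dots,\varphi(w_k)$ span a face of $\overline\rho$, that is, a simplex of $\overline L$ all of whose vertices are vertices of $L$. By the property of first barycentric subdivisions recalled just before the statement, such a simplex actually belongs to $L$. Therefore $\varphi$ sends simplices of $K$ to simplices of $L$ and, $K$ being closed, extends by barycentric interpolation to a genuine simplicial map $g\colon|K|\to|L|$; by construction $f(\St_{K}(v))\subset\St_{L}(\varphi(v))=\St_{L}(g(v))$, so $g$ is a simplicial approximation to $f$.

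The only genuine obstacle, as opposed to routine bookkeeping, is this last step of (ii): in the present setting a simplicial complex is a set of open simplices that need not be closed under taking faces, so a priori the face of $\rho$ spanned by the $\varphi(w_i)$ only lies in $\overline L$, and $g$ could fail to take values in $|L|$. The barycentric-subdivision hypothesis is precisely what rules this out, and recognising that this subtlety, rather than anything about the real closed field $R$, is the real content of the statement is the crux of the argument.
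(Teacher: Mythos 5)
Your proof is correct and follows exactly the route the paper intends: the paper gives no proof of this remark beyond the phrase ``as in the classical case of closed simplices'' together with the preliminary note that in a first barycentric subdivision every simplex of $\overline{L}$ with vertices in $L$ already belongs to $L$, and your argument is precisely that classical carrier/star argument, correctly adapted to complexes of open simplices and invoking the barycentric-subdivision property at exactly the point where it is needed.
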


We shall use the following notion introduced in \cite{07pB} (see Definition 1.3 therein). Given a closed simplicial complex $K$ in $R^{m}$ and $S_{1},\ldots,S_{l}$ definable subsets of $|K|$, $(K',\phi')$ is a \textit{normal triangulation of $K$ partitioning $S_{1},\ldots,S_{l}$}, if it satisfies the following conditions:\\
(i) $(K',\phi')$ is a triangulation of $|K|$, $\phi':|K'|\rightarrow |K|$, partitioning $S_{1},\ldots,S_{l}$ and all $\sigma\in K$,  \\
(ii) $K'$ is a subdivision of $K$ (in particular $|K'|=|K|$), and\\
(iii) for every $\tau \in K'$ and $\sigma \in K$, if $\tau \subset \sigma$ then $\phi'(\tau)\subset \sigma$.
\vspace{0.2cm}

\begin{fact}\label{fact:normal}(i)(Normal Triangulation Theorem) Let $K$ be a closed simplicial complex and let $S_{1},\ldots,S_{l}$ be definable subsets of $|K|$. Then there exists a normal triangulation $(K',\phi')$ of $K$ partitioning $S_{1},\ldots,S_{l}$.\\
(ii) Let $(K',\phi')$ be a normal triangulation of a closed simplicial complex $K$ partitioning the definable subsets $S_{1},\ldots,S_{l}$ of $|K|$. Then $\textrm{id}_{|K|}$ and $\phi'$ are canonically definably homotopic via the map $(x,s)\mapsto (1-s)x+s\phi'(x)$, for all $(x,s)\in |K'|\times I$.
\end{fact}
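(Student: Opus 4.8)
The two parts are of quite different character, so the plan is to treat them separately, disposing of (ii) quickly and concentrating the work on (i).

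Part (ii) is a direct consequence of the normality condition (iii). Fix $x\in|K'|=|K|$ and let $\tau\in K'$ be the open simplex containing $x$; since $K'$ is a subdivision of $K$ partitioning every $\sigma\in K$, there is a unique $\sigma\in K$ with $\tau\subset\sigma$. By (iii) we have $\phi'(x)\in\phi'(\tau)\subset\sigma$, and since $x\in\tau\subset\sigma$ as well, convexity of the open simplex $\sigma$ gives that the whole segment $\{(1-s)x+s\phi'(x):s\in I\}$ lies in $\sigma\subset|K|$. Hence $H(x,s)=(1-s)x+s\phi'(x)$ is well defined as a map $|K'|\times I\to|K|$; it is visibly continuous and definable, with $H(\cdot,0)=\mathrm{id}$ and $H(\cdot,1)=\phi'$, so $\mathrm{id}_{|K|}$ and $\phi'$ are canonically definably homotopic. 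This is the easy half, and it uses (iii) in an essential way.

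For part (i) I would argue by induction on $d=\dim K$, the case $d=0$ being trivial (take $K'=K$, $\phi'=\mathrm{id}$). For the inductive step, let $K^{(d-1)}$ be the closed $(d-1)$-skeleton of $K$; it is a closed simplicial complex of dimension $<d$, so the induction hypothesis applied to $K^{(d-1)}$ and the traces $S_i\cap|K^{(d-1)}|$ yields a normal triangulation $(L,\psi)$ of $K^{(d-1)}$ partitioning these traces, with $\psi:|K^{(d-1)}|\to|K^{(d-1)}|$ simplex-preserving in the sense of (iii) and $L$ a subdivision of $K^{(d-1)}$. It then remains to extend $(L,\psi)$ across each top-dimensional simplex $\sigma\in K$: for every such $\sigma$ I must produce a subdivision of $\sigma$ which, together with $L|_{\partial\sigma}$, forms a subdivision of $\overline\sigma$, and a homeomorphism $\phi_\sigma:\overline\sigma\to\overline\sigma$ extending $\psi|_{\partial\sigma}$ with $\phi_\sigma(\sigma)=\sigma$, such that the resulting triangulation partitions $S_i\cap\sigma$ and satisfies (iii) inside $\overline\sigma$. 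Since distinct $\phi_\sigma$ all restrict to $\psi$ on the common faces in $K^{(d-1)}$, and the induced subdivisions agree there, the pasting lemma glues them to a global homeomorphism $\phi'$ of $|K|$ together with a subdivision $K'$ of $K$ enjoying all the required properties.

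The whole difficulty is concentrated in this extension over a single simplex $\sigma$. I would first cone: picking the barycenter $b$ of $\sigma$ and extending $\psi|_{\partial\sigma}$ radially by the Alexander trick gives a simplex-preserving homeomorphism $c:\overline\sigma\to\overline\sigma$ fixing $b$, which already matches $\psi$ on $\partial\sigma$ and respects (iii). Pulling the definable sets $S_i\cap\sigma$ back through $c$, one is reduced to subdividing $\sigma$ so as to partition the $c^{-1}(S_i)$ by means of a further simplex-preserving homeomorphism $h$ that is the identity on $\partial\sigma$, whereupon $\phi_\sigma=c\circ h$ does the job. This last move — triangulating the interior of a simplex partitioning prescribed definable sets while keeping a given triangulation fixed on the boundary — is exactly the step for which the semialgebraic machinery of Delfs–Knebusch (polynomial bounds and the Lebesgue number) is unavailable, and it must instead be carried out purely o-minimally, through a cell decomposition of $\overline\sigma$ compatible with $L|_{\partial\sigma}$ together with definable choice to straighten the cells into simplices and to interpolate the boundary data inward. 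I expect this relative triangulation to be the main obstacle; it is the technical heart of \cite{07pB}, and it is what allows the normal triangulation to play, in the o-minimal setting, the role that the Lebesgue number plays in the semialgebraic one.
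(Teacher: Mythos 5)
Your argument for part (ii) is correct and complete: condition (iii) of normality gives $\phi'(\tau)\subset\sigma$ for the unique $\sigma\in K$ containing the open simplex $\tau\in K'$ through $x$, so both $x$ and $\phi'(x)$ lie in the convex set $\sigma$ and the linear homotopy stays inside $|K|$. This is exactly the content of Theorem~1.1 of \cite{07pB}, which the paper cites for this part, and your derivation of it is sound.

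Part (i), however, is not proved. The paper offers no proof either --- it cites Theorem~1.4 of \cite{07pB}, a fifteen-page preprint whose main result this is --- and your induction on skeleta merely relocates the entire difficulty into the step you yourself flag as unresolved: producing, for a single top-dimensional simplex $\sigma$, a definable homeomorphism $\phi_\sigma:\overline{\sigma}\rightarrow\overline{\sigma}$ with $\phi_\sigma(\sigma)=\sigma$ that (a) restricts to the prescribed $\psi$ on $\partial\sigma$, (b) carries a subdivision of $\overline{\sigma}$ extending $L|_{\partial\sigma}$, and (c) partitions the sets $S_i\cap\sigma$. This is a relative triangulation theorem, and the phrase ``a cell decomposition of $\overline{\sigma}$ compatible with $L|_{\partial\sigma}$ together with definable choice to straighten the cells into simplices and to interpolate the boundary data inward'' does not constitute an argument: the o-minimal cell decomposition theorem produces cells adapted to a coordinate system, not to a prescribed triangulation of the boundary sphere, and ``straightening'' a cell onto a simplex while matching given boundary data is precisely the point at which the semialgebraic proofs invoke the tools unavailable here. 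Nothing you have written explains how to overcome this, so the inductive step does not close. Your outer framework (reduce to one simplex, cone from the barycenter, observe that $\phi_\sigma(\sigma)=\sigma$ forces condition (iii) inside $\sigma$) is a reasonable skeleton, but as it stands the proof of (i) is a restatement of the theorem, not a proof of it; the honest course, which is also what the paper does, is to quote \cite{07pB}.
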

For the proof of (i) and (ii) see Theorem 1.4 and Theorem 1.1 in \cite{07pB}, respectively.

Extending a given triangulation is a technical tool used in the construction of triangulations (see Lemma II.4.3 in \cite{85DK}). We next prove that the extension process can be done preserving normality. We will make use of this tool in the proof of Theorem \ref{teo:princi}.
\begin{lema}\label{prop:extnormal}Let $K$ be a closed simplicial complex and $K_Z$ a closed simplicial subcomplex of $K$. Let $(K_0,\phi_0)$ be a normal triangulation of $K_Z$. Then there exists a normal triangulation $(K',\phi')$ of $K$ such that $K_0\subset K'$ and $\phi'|_{|K_0|}=\phi_0$.
\end{lema}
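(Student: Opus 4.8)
The plan is to extend $(K_0,\phi_0)$ over the simplices of $K$ that do not lie in $K_Z$, processing them by increasing dimension and using a coning construction for both the subdivision and the map, in the spirit of the (non-normal) extension of Lemma II.4.3 in \cite{85DK} but keeping track of condition (iii) throughout. First I would record a property of $\phi_0$ that makes the boundary data glue: since $(K_0,\phi_0)$ is normal and $K_0$ is a subdivision of $K_Z$, each open $\tau\in K_0$ lies in a unique open simplex of $K_Z$, and for every $\rho\in K_Z$ one has $\phi_0(\overline{\rho})=\overline{\rho}$ with $\phi_0$ restricting to a definable homeomorphism of $\overline{\rho}$ onto itself. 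Indeed $\phi_0(\overline{\rho})\subseteq\overline{\rho}$ is immediate from (iii); conversely, if $y\in\rho$ and $\phi_0^{-1}(y)\in\tau$, then by (iii) $\phi_0(\tau)$ lies in the unique open simplex of $K_Z$ containing $\tau$, which must then be $\rho$, so $\phi_0^{-1}(\rho)\subseteq\rho$, whence equality. Note also that $|K_Z|$ is closed in $|K|$.

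For the induction, suppose that over the closed subcomplex generated by $K_Z$ together with all simplices of $K\setminus K_Z$ of dimension $<d$ I have already built a normal subdivision together with a definable homeomorphism $\psi$ extending $\phi_0$ and carrying each closed simplex of that subcomplex onto itself. Let $\sigma\in K\setminus K_Z$ have dimension $d$. Every proper face of $\sigma$ is either in $K_Z$ or is a processed simplex of dimension $<d$, so $\partial\overline{\sigma}$ already carries a subdivision and $\psi$ restricts to a definable homeomorphism $\partial\overline{\sigma}\to\partial\overline{\sigma}$ sending each closed proper face onto itself. Choosing the barycentre $b_\sigma\in\sigma$, I subdivide $\overline{\sigma}$ by coning the subdivision of $\partial\overline{\sigma}$ from $b_\sigma$, and I extend the map by $\phi'(b_\sigma)=b_\sigma$ and $\phi'((1-s)x+sb_\sigma)=(1-s)\psi(x)+sb_\sigma$ for $x\in\partial\overline{\sigma}$, $s\in I$. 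Since the radial parametrization of $\overline{\sigma}$ from $b_\sigma$ is definable, $\phi'$ is definable; it is continuous at $b_\sigma$ and, being the cone of a homeomorphism of $\partial\overline{\sigma}$ onto itself, is a definable homeomorphism of $\overline{\sigma}$ onto $\overline{\sigma}$. Because faces are processed before $\sigma$, the subdivisions agree on shared faces, so the pieces glue to a subdivision $K'$ of $K$ with $K_0\subset K'$ and a definable homeomorphism $\phi':|K'|\to|K|$ with $\phi'|_{|K_Z|}=\phi_0$.

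The step needing the most care—and the one where normality could fail—is condition (iii) for the newly created simplices, which I would verify by barycentric coordinates. A subdivision simplex contained in the open simplex $\sigma$ has the form $\tau=b_\sigma* \tau'$ with $\tau'$ in the subdivision of some face $\rho$ of $\sigma$, and a point of $\tau$ reads $(1-s)x+sb_\sigma$ with $x\in\overline{\rho}$ and $s\in(0,1]$; its image $(1-s)\psi(x)+sb_\sigma$ has all barycentric coordinates with respect to $\sigma$ strictly positive, since $b_\sigma$ does and $s>0$, and therefore lies in the open simplex $\sigma$. For a $\tau$ contained in a proper face this is exactly the inductive hypothesis on $\psi$. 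This yields (iii); condition (ii) holds by construction, and (i)—that $(K',\phi')$ is a triangulation partitioning all $\sigma\in K$—follows since $\phi'$ is a definable homeomorphism carrying each subdivision simplex into a single open simplex of $K$. Hence $(K',\phi')$ is a normal triangulation of $K$ with $K_0\subset K'$ and $\phi'|_{|K_0|}=\phi_0$, as required.
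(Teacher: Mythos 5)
Your proof is correct and follows essentially the same route as the paper: induction over the skeleta of $K$ relative to $K_Z$, subdividing each new simplex by coning its already-subdivided boundary from the barycentre, and extending the map by the formula $(1-s)x+s b_\sigma\mapsto (1-s)\psi(x)+s b_\sigma$, with normality condition (iii) checked because everything stays inside the closed simplex $\overline{\sigma}$. The only differences are presentational (you cone the whole subdivided boundary at once and verify (iii) via barycentric coordinates, while the paper enumerates the boundary pieces $\tau_j^{\sigma}$ and treats each cone separately), so there is nothing substantive to change.
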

\begin{proof}Note that $|K_{0}|=|K_{Z}|$, since $K_0$ is a subdivision of $K_Z$. For every $m\geq
0$ we denote by $SK^{m}$ the closed complex which is the union of $K_{Z}$ and all the simplices of $K$ of dimension $\leq m$. We will show that there exists a normal triangulation $(K^{m},\phi^{m})$ of $SK^{m}$ such that
$K_{0} \subset K^{m}$ and $\phi^{m}|_{|K_{0}|}=\phi_{0}$. Hence for $m=dim(K)$ we will obtain the required normal triangulation.

For $m=0$ let $K^{0}$ be the union of $K_{0}$ and all vertices of $K$. Let $\phi^{0}$ be equal to $\phi_{0}$ on
$|K_{0}|$ and the identity on the vertices of $K$ that does not lie in $|K_{0}|$. Clearly $(K^{0},\phi^{0})$ is a normal triangulation of $SK^0$, $K_{0}\subset K^{0}$ and $\phi^{0}|_{|K_{0}|}=\phi_{0}$.

Suppose we have constructed $(K^{m},\phi^{m})$. Let $\Sigma_{m+1}$ be the collection of simplices
in $K\setminus K_{0}$ of dimension $m+1$. Hence, for every
$\sigma\in \Sigma_{m+1}$, $\partial \sigma$ is contained in $SK^{m}$. On the other hand, $K^m$ is a subdivision of $SK^m$ and so, for each $\sigma\in \Sigma_{m+1}$, there exists a finite collection of 
indices $J_{\sigma}$ and simplices $\tau^{\sigma}_j$ of
$K^{m}$, $j \in J_{\sigma}$, such that $\partial \sigma=
\dot{\bigcup}_{j\in J_{\sigma}}\tau^{\sigma}_j$. For each $j\in J_{\sigma}$ denote by $[\tau^{\sigma}_j,\hat{\sigma}]$ the cone over $\tau^{\sigma}_j$ with
vertex the barycenter $\hat{\sigma}$ of $\sigma$, that is, $[\tau^{\sigma}_j,\hat{\sigma}]=\{(1-t)u+t\hat{\sigma}:u\in\tau_{j}^{\sigma},t\in [0,1] \}$. 
For each $\sigma\in \Sigma_{m+1}$ and $j\in J_{\sigma}$ we define
$$\begin{array}{crcl}
h^{\sigma}_j:& [\tau^{\sigma}_j,\hat{\sigma}]& \rightarrow & \overline{\sigma} \\
 & (1-t)u+t\hat{\sigma}& \rightarrow & (1-t)\phi^{m}(u)+t\hat{\sigma}.
\end{array}$$
Note that $h^{\sigma}_j$ is well-defined because given $u\in
\tau^{\sigma}_j$ there exists a proper face $\sigma_{0}\in K$ of $\sigma$ such that
$\tau^{\sigma}_j\subset \sigma_{0}$ and therefore, since $\sigma_0\in SK^m$ and $(K^m,\phi^m)$ is a normal triangulation, we have that $\phi^{m}(u)\in \phi^{m}(\tau^{\sigma}_j)\subset \sigma_{0} \subset
\partial \sigma$. Hence $h^{\sigma}_j((1-t)u+t\hat{\sigma})\in \overline{\sigma}$ for all $t\in [0,1]$ and $u\in \tau_j^{\sigma}$. Note that the map $h^{\sigma}_j$ is injective and it is indeed continuous. Let $K^{m+1}$ be the collection of simplices
in $K^{m}$ further with the collection of simplices $(\tau^{\sigma}_j,\hat{\sigma})=\{(1-t)u+t\hat{\sigma}:u\in\tau^{\sigma}_j,t\in (0,1)\}$
and all their faces for $\sigma\in \Sigma_{m+1}$ and $\tau_j^{\sigma}$ as described above. Finally, let $\phi^{m+1}$ be the extension  of $\phi^m$ to $K^{m+1}$ such that $\phi^{m+1}|_{[\tau^{\sigma}_j,\hat{\sigma}]}=h^{\sigma}_j$. We show that $\phi^{m+1}$ is well-defined. It is enough to prove that for a fixed $\sigma\in \Sigma_{m+1}$,  the sets $h^{\sigma}_j((\tau^{\sigma}_j,\hat{\sigma}))$ , $j\in J_{\sigma}$, are pairwise disjoint. Indeed, $h^{\sigma}_j((\tau^{\sigma}_j,\hat{\sigma}))=(\phi^{m}(\tau^{\sigma}_j),\hat{\sigma})$,
where
$(\phi^{m}(\tau^{\sigma}_j),\hat{\sigma})=\{(1-t)x+t\hat{\sigma}:x\in
\phi^{m}(\tau^{\sigma}_j), t\in (0,1)\}$ and  since the sets
$\phi^{m}(\tau^{\sigma}_j)$ are pairwise disjoint, the sets $(\phi^{m}(\tau^{\sigma}_j),\hat{\sigma})$ are also pairwise disjoint. Note that $\phi^{m+1}$ is continuous.

We now show that $(K^{m+1},\phi^{m+1})$ is a normal triangulation of $SK^{m+1}$. To prove that $(K^{m+1},\phi^{m+1})$ partitions the simplices of $SK^{m+1}$ it is enough to consider each $\sigma \in \Sigma_{m+1}$ (since $K^m\subset K^{m+1}$, $\phi^{m+1}|_{|K^m|}=\phi^m$, and $(K^m,\phi^m)$ is normal). Now, for each of these $\sigma\in \Sigma_{m+1}$, the image of $h_j^{\sigma}$ is contained in $\overline{\sigma}$ and, since $\partial \sigma=\bigcup_{j\in J_{\sigma}}\phi^{m}(\tau^{\sigma}_j)$, then we have that $\sigma=\bigcup_{j\in J_{\sigma}}(\phi^{m}(\tau^{\sigma}_j),\hat{\sigma})\cup  \{\hat{\sigma}\}$. Clearly $K^{m+1}$ is a subdivision of $SK^{m+1}$ because for the relevant simplices of $SK^{m+1}$, i.e, those $\sigma\in \Sigma_{m+1}$, the cones
$(\tau^{\sigma}_j, \hat{\sigma})$ and their faces form a triangulation
 of $\overline{\sigma}$. Also property $(iii)$ of normality holds, since we have always worked inside each simplex $\sigma \in
\Sigma_{m+1}$.
\end{proof}

\section{The o-minimal homotopy sets}\label{shomotset}
Let $(X,A)$ and $(Y,B)$ be two pairs of definable sets. Let $C$ be a relative closed definable subset of $X$ and let $h:C\rightarrow Y$ be a definable map such that $h(A\cap
C )\subset B$. We say that two definable maps $f,g:(X,A)\rightarrow
(Y,B)$ with $f|_{C}=g|_{C}=h$, are \textbf{definably homotopic relative to $h$}, denoted by $f\thicksim_h g$, if there exists a definable map $H:(X \times I,A \times I)\rightarrow (Y,B)$ such that $H(x,0)=f(x)$, $H(x,1)=g(x)$ for all $x\in X$ and $H(x,t)=h(x)$ for all $x\in C$ and $t\in I$. The \textbf{o-minimal homotopy set of $(X,A)$ and $(Y,B)$ relative to $h$} is the set
$$[(X,A),(Y,B)]_{h}^{\mathcal{R}}=\{f: f:(X,A)\rightarrow (Y,B) \textrm{ definable in } \mathcal{R}, f|_{C}=h \}/\thicksim_h.$$
If $C=\emptyset$ we omit all references to $h$. We shall denote by $\mathcal{R}_0$ the field structure of the real closed field $R$ of our o-minimal structure $\mathcal{R}$. Note that if we take $\mathcal{R}$ to be $\mathcal{R}_0$  above, then we obtain the definition of a semialgebraic homotopy set (see Section 2 of Chapter 3 in \cite{85DK}).

Our main result is the following theorem.
\begin{teo}\label{teo:princi}Let $(X,A)$ and $(Y,B)$ be two pairs of semialgebraic sets with $X$ closed and bounded. Let $C$ be a closed semialgebraic subset of $X$ and
$h:C\rightarrow Y$ a semialgebraic map such that $h(A \cap
C )\subset B$. Then, if $A$ is closed in $X$, the map
$$\begin{array}{rcl}
\rho:[(X,A),(Y,B)]_{h}^{\mathcal{R}_{0}} & \rightarrow &
[(X,A),(Y,B)]_{h}^{\mathcal{R}} \\

[f] & \mapsto & [f]
\end{array}$$
is a bijection.
\end{teo}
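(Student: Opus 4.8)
The plan is to reduce the whole statement to the surjectivity of $\rho$ and then to prove surjectivity by a relative, semialgebraic simplicial approximation in which normal triangulations play the role of the (unavailable) Lebesgue number. For the reduction, suppose $f,g\in[(X,A),(Y,B)]_h^{\mathcal R_0}$ satisfy $\rho([f])=\rho([g])$, i.e.\ there is a definable homotopy of pairs $H:(X\times I,A\times I)\to(Y,B)$ with $H(\cdot,0)=f$, $H(\cdot,1)=g$ and $H(x,t)=h(x)$ on $C\times I$. Put $\bar X=X\times I$, $\bar A=A\times I$, $\bar C=(X\times\{0,1\})\cup(C\times I)$, and let $\bar h:\bar C\to Y$ be $f$ on $X\times\{0\}$, $g$ on $X\times\{1\}$ and $(x,t)\mapsto h(x)$ on $C\times I$; this is a well-defined semialgebraic map with $\bar h(\bar A\cap\bar C)\subset B$. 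Since $X$ closed and bounded and $A$ closed in $X$ force $\bar X$ closed and bounded, $\bar A$ closed in $\bar X$ and $\bar C$ closed, surjectivity of $\rho$ for the data $(\bar X,\bar A)$, $(Y,B)$, $\bar C$, $\bar h$ furnishes a \emph{semialgebraic} map agreeing with $\bar h$ on $\bar C$ and sending $\bar A$ into $B$; this is exactly a semialgebraic homotopy of pairs relative to $h$ from $f$ to $g$, so $[f]=[g]$ and $\rho$ is injective. Thus it suffices to prove that $\rho$ is surjective.

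For surjectivity, fix a definable $f:(X,A)\to(Y,B)$ with $f|_C=h$. First I would triangulate the \emph{map} $h$ semialgebraically (\cite{85DK}), obtaining compatible semialgebraic triangulations of $C$ and of $Y$ with respect to which $h$ is simplicial; passing to a barycentric subdivision of the target I get a semialgebraic triangulation $(L,\psi)$ of $Y$ that is a first barycentric subdivision (so that Remark \ref{rmk:simplicialapprox}(ii) applies and the subcomplex $L_B$ over $B$ is full), still with $h$ simplicial. Next I would extend the triangulation of $C$ to a semialgebraic triangulation $(K,\phi)$ of $X$ partitioning $A$ and $C$ (\cite{85DK}); as $X$ is closed and bounded, $K$ is a closed complex, and I write $K_C$, $K_A$ for the subcomplexes over $C$, $A$. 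Identifying along $\phi,\psi$, we obtain a definable map $f:|K|\to|L|$, simplicial on $K_C$ (where $f=h$) and with $f(|K_A|)\subset|L_B|$.

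Now comes the heart. Using Lemma \ref{prop:extnormal} together with the Normal Triangulation Theorem (Fact \ref{fact:normal}(i)), I would produce a normal triangulation $(K',\phi')$ of $K$ that is the identity over $C$ and fine enough off $C$ to resolve the definable open cover $\{f^{-1}(\St_L(w)):w\in\V(L)\}$, so that $f\circ\phi'$ satisfies the star condition with respect to $L$; since the triangulation is the identity over $C$ we have $\phi'|_{|K_C|}=\mathrm{id}$, whence $f\circ\phi'=h$ stays simplicial there. By Remark \ref{rmk:simplicialapprox}(ii), $f\circ\phi'$ then admits a simplicial approximation $g:|K'|\to|L|$, whose vertex map I choose to restrict to that of $h$ on $\V(K_C)$ and to send each vertex of $K_A$ into $L_B$ (possible since $L_B$ is full and $f(|K_A|)\subset|L_B|$); hence $g|_C=h$ and $g(|K_A|)\subset|L_B|=B$. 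The decisive point is that $g$ is affine on each of the finitely many simplices of $K'$, all of which are semialgebraic subsets of $R^m$, with affine formulas whose coefficients lie in $R$; hence, whatever definable data went into choosing $K'$, the map $g$ is \emph{semialgebraic} (definable in $\mathcal R_0$ with parameters). Finally, Fact \ref{fact:normal}(ii) gives a definable homotopy $\phi'\simeq\mathrm{id}$ and Remark \ref{rmk:simplicialapprox}(i) a straight-line homotopy $f\circ\phi'\simeq g$ in $|L|$; concatenating yields $f\simeq g$ definably. Over $C$ both homotopies are constant (there $\phi'=\mathrm{id}$ and $f=g=h$), and over $A$ the straight-line homotopy stays inside the full subcomplex $|L_B|=B$ (the carrier of each $f(x)$ with $x\in A$ lies in $L_B$ and contains $g(x)$); thus the homotopy is one of pairs relative to $h$, giving $f\thicksim_h g$ and proving $\rho$ surjective.

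The main obstacle is the construction in the preceding step: producing the subdivision that forces the star condition. In a non-archimedean real closed field there is no Lebesgue number to guarantee that some iterated barycentric subdivision refines the cover $\{f^{-1}(\St_L(w))\}$, and indeed iterated barycentric subdivisions need not shrink at all; this is precisely what the normal triangulations of \cite{07pB} are designed to overcome, and Lemma \ref{prop:extnormal} is what lets me carry this out while keeping the triangulation fixed over $C$, so that the resulting homotopy is relative to $h$. Reconciling these two demands --- fine enough off $C$ to resolve the cover, yet the identity over $C$ --- together with the elementary but essential observation that piecewise linear maps are semialgebraic irrespective of the parameters used to build them, is what makes the argument go through. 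The o-minimal homotopy extension lemma (Lemma \ref{prop:ext.homotopia}) remains available should one prefer to install the relative condition by extending homotopies rather than by triangulating $h$ at the outset.
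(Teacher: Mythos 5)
Your reduction of injectivity to surjectivity is fine and is essentially the paper's first reduction, but the surjectivity argument has two genuine gaps, both located exactly where the paper does its real work. First, you begin by ``triangulating the map $h$'' so that $h$ becomes simplicial with respect to triangulations of $C$ and $Y$. No such tool is available: the semialgebraic triangulation theorem triangulates sets and finite families of subsets, not maps, and triangulability of an arbitrary continuous semialgebraic map is not a known result (nothing of the sort is used in \cite{85DK} or here). Even granting it, once you replace the target by its barycentric subdivision $L$ the map $h$ is no longer simplicial, so the device by which you force $g|_{C}=h$ (extending the vertex map of $h$) collapses. Second, and more seriously, you never justify the existence of a normal triangulation $(K',\phi')$ that is simultaneously the identity over $|K_C|$ and partitions $f^{-1}(\sigma)$ for every $\sigma\in L$ on \emph{all} of $|K|$: the Normal Triangulation Theorem lets you partition prescribed sets, and Lemma \ref{prop:extnormal} lets you extend a normal triangulation of a subcomplex by coning, but the coned extension has no reason to partition the preimages $f^{-1}(\sigma)$ in the region it fills in, and for an arbitrary definable $f$ you cannot both freeze the triangulation on $|K_C|$ and refine it arbitrarily close to $|K_C|$. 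You name this tension yourself but do not resolve it.

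The paper resolves it as follows, and this is the content your proposal is missing. One first reduces to $A=B=\emptyset$ (so your separate bookkeeping for the pair $(A,B)$ is unnecessary; it is also problematic when $B$ is not closed, since the carrier of $g(v)$ may be a simplex of $L_B$ none of whose vertices lies in $L_B$, so the vertex map cannot be forced into $\V(L_B)$). Then, using the o-minimal homotopy extension lemma (Lemma \ref{prop:ext.homotopia}), $f$ is replaced, relative to $h$, by a map agreeing on a whole closed neighbourhood $|K_D|\supset|K_E|\supset|K_C|$ with a semialgebraic $\widetilde h$ that carries each simplex of $K_D$ into a simplex of $L$. The normal triangulation is then only required to partition $f^{-1}(\sigma)\cap|K_Z|$ on $|K_Z|=|K|\setminus \I_{|K|}(|K_D|)$, is the identity on $|K_E|$ (legitimate because $|K_E|\cap|K_Z|=\emptyset$), and its extension by Lemma \ref{prop:extnormal} automatically partitions the preimages over $|K_D|$ thanks to normality together with the simplex-into-simplex property of $\widetilde h$. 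Finally, the paper never makes $h$ simplicial: the output map is not the simplicial approximation $\mu$ itself but the splice $(1-\lambda)\widetilde h+\lambda\mu$ built from a semialgebraic Urysohn function $\lambda$ vanishing exactly on $|K_C|$; this is semialgebraic and equals $h$ on $|K_C|$ for free. Your observation that simplicial maps are semialgebraic irrespective of the parameters used to construct the subdivision is correct and is indeed why the method works, but the two steps above are where the proof actually lives.
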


We are specially interested in the case $C=\emptyset$. However, in order to reduce Theorem \ref{teo:princi} to the following proposition, we will need to consider the general case.
\begin{prop}\label{prop:reduc4}Let $K$, $K_C$ and $L$ be simplicial complexes with $K$ closed. Let $h:|K_C| \rightarrow |L|$ be a semialgebraic map, where $K_C$ is a closed subcomplex of $K$. Then the map
$$\rho:[|K|,|L|]_{h}^{\mathcal{R}_{0}}\rightarrow [|K|,|L|]_{h}^{\mathcal{R}}$$
is surjective.
\end{prop}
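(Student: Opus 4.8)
\section*{Proof proposal}

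The plan is to prove surjectivity directly: given a definable map $f\colon|K|\to|L|$ with $f|_{|K_{C}|}=h$, I will construct a semialgebraic map $g\colon|K|\to|L|$ with $g|_{|K_{C}|}=h$ that is definably homotopic to $f$ relative to $h$, so that $\rho([g])=[f]$. Throughout I replace $L$ by its first barycentric subdivision (which does not change $|L|$) so that Remark~\ref{rmk:simplicialapprox}(ii) becomes available.

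The core step is an o-minimal simplicial approximation of $f$, and this is the step I expect to be the main obstacle. Classically one invokes the Lebesgue number to subdivide $K$ until the open stars of its vertices refine the definable open cover $\{f^{-1}(\St_{L}(w)):w\in\V(L)\}$ of $|K|$; since no Lebesgue number is available in $\mathcal{R}$, I would instead use the Normal Triangulation Theorem (Fact~\ref{fact:normal}(i)) to produce a normal triangulation $(K',\phi')$ of $K$ adapted to the definable preimages $f^{-1}(\St_{L}(w))$ and fine enough that, after transporting $f$ through the canonical homotopy $\mathrm{id}_{|K|}\simeq\phi'$ of Fact~\ref{fact:normal}(ii), the star condition of Remark~\ref{rmk:simplicialapprox}(ii) holds. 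That remark then provides a simplicial —hence semialgebraic— map $g_{1}\colon|K|\to|L|$ together with the straight-line homotopy $H_{0}(x,s)=(1-s)f(x)+sg_{1}(x)$ of Remark~\ref{rmk:simplicialapprox}(i), so $g_{1}\thicksim f$ definably. The observation that drives the relative part is that $f|_{|K_{C}|}=h$ is already semialgebraic, so the restriction $\eta:=H_{0}|_{|K_{C}|\times I}$, a homotopy from $h$ to the simplicial map $g_{1}|_{|K_{C}|}$, is semialgebraic even though $H_{0}$ itself is only definable.

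It remains to correct $g_{1}$ on $|K_{C}|$ without leaving the semialgebraic category. Applying the homotopy extension lemma (Lemma~\ref{prop:ext.homotopia}) to $g_{1}$ and the reversed homotopy $\bar\eta(x,s)=\eta(x,1-s)$, which starts at $g_{1}|_{|K_{C}|}$ and ends at $h$, I obtain $G\colon|K|\times I\to|L|$ with $G(\cdot,0)=g_{1}$ and $G|_{|K_{C}|\times I}=\bar\eta$; since the retract furnished by that lemma is semialgebraic and both $g_{1}$ and $\bar\eta$ are semialgebraic, $G$ is semialgebraic, and hence so is $g:=G(\cdot,1)$, which satisfies $g|_{|K_{C}|}=h$. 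Concatenating $H_{0}$ with $G$ yields a definable homotopy $F$ from $f$ to $g$ whose restriction to $|K_{C}|\times I$ is $\eta$ followed by its reverse, and therefore a homotopy from $h$ to $h$ that is null-homotopic relative to its endpoints.

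Finally I would promote $F$ to a homotopy that is constant on $|K_{C}|$. Applying Lemma~\ref{prop:ext.homotopia} a second time, now with $X=|K|\times I$, the closed subset $A=(|K|\times\{0,1\})\cup(|K_{C}|\times I)$, the map $F$, and the standard null-homotopy of $F|_{|K_{C}|\times I}$ taken constant on $|K|\times\{0,1\}$, I obtain a definable homotopy $F'\colon|K|\times I\to|L|$ from $f$ to $g$ with $F'(x,s)=h(x)$ for all $x\in|K_{C}|$; that is, $g\thicksim_{h}f$. Since $g$ is semialgebraic with $g|_{|K_{C}|}=h$, this gives $\rho([g])=[f]$ and proves surjectivity. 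The one genuinely new ingredient, and the crux on which everything rests, is the middle paragraph: arranging the star condition through a normal triangulation in place of the Lebesgue number, which is exactly where the o-minimal argument departs from \cite{85DK} and leans on \cite{07pB}.
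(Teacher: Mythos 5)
Your overall architecture (normal triangulation to get the star condition, simplicial approximation, then a correction on $|K_C|$) matches the paper's, and you correctly identify the crux. But there is a genuine gap in the step on which everything else depends: the claim that the track of your homotopy on $|K_C|\times I$ is semialgebraic. The straight-line homotopy of Remark \ref{rmk:simplicialapprox}(i) is between $g:=f\circ\phi'$ and its simplicial approximation $g_1$, not between $f$ and $g_1$ (the segment from $f(x)$ to $g_1(x)$ need not lie in $|L|$), so the homotopy from $f$ to $g_1$ necessarily passes through the canonical homotopy $(x,s)\mapsto f((1-s)x+s\phi'(x))$ of Fact \ref{fact:normal}(ii). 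Restricted to $|K_C|\times I$ this equals $h((1-s)x+s\phi'(x))$, and the second leg restricted there equals $(1-s)\,h(\phi'(x))+s\,g_1(x)$. Both involve $\phi'$, which comes from the Normal Triangulation Theorem applied to the merely definable sets $f^{-1}(\Stc_L(w))$ and is therefore only definable, not semialgebraic; moreover $\phi'$ does not fix $|K_C|$ pointwise, so $g|_{|K_C|}=h\circ\phi'|_{|K_C|}\neq h$. Hence your $\eta$ is neither the map you wrote down nor semialgebraic, your first application of Lemma \ref{prop:ext.homotopia} only yields a definable $G$, and the construction produces no semialgebraic representative.

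The paper closes exactly this gap with a preliminary reduction you omit: following \cite{85DK} (via the o-minimal homotopy extension lemma) one first replaces $f$, up to definable homotopy rel $h$, by a map that on a closed polyhedral neighbourhood $|K_D|$ of $|K_C|$ coincides with a \emph{semialgebraic} $\widetilde h$ carrying each simplex of $K_D$ into a simplex of $L$, with an intermediate barrier $|K_C|\subset\I_{|K|}(|K_E|)\subset|K_E|\subset\I_{|K|}(|K_D|)$. The normal triangulation is then taken to be the identity on $|K_E|$ (possible because the partitioning data need only be imposed on $|K|\setminus\I_{|K|}(|K_D|)$; near $|K_C|$ the simplex-compatibility of $\widetilde h$ does the work), so the homotopy $f\thicksim f\circ\phi'$ is constant on $|K_C|$. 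Finally, instead of the plain straight-line homotopy to the simplicial approximation $\mu$, one uses the damped homotopy $(1-s\lambda(x))g(x)+s\lambda(x)\mu(x)$ with a semialgebraic Urysohn function $\lambda$ vanishing exactly on $|K_C|$ and equal to $1$ off $\I_{|K|}(|K_E|)$; the end map is semialgebraic because on $|K_E|$ it equals $(1-\lambda)\widetilde h+\lambda\mu$ and equals $\mu$ elsewhere, and the homotopy is already constant on $|K_C|$, making your final two correction steps unnecessary. Without some version of this barrier construction your middle paragraph cannot be carried out.
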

Granted the proposition, Theorem 3.1 is proved as follows. We first make two reductions: (i) it suffices to prove that $\rho$ is onto and (ii) it suffices to consider the case $A=B=\emptyset$. These two reductions are done in the proof of Theorem 4.2 of Chapter III in \cite{85DK}, pp. 250. Though the statement of Theorem 4.2 in \cite{85DK} differs from our Theorem \ref{teo:princi} (because a real closed field $S$ extension of $R_0$ is considered there instead of our o-minimal $\mathcal{R}$), the proof of the mentioned reductions apply to our context except that at some point they use the semialgebraic homotopy extension lemma and we have to use our o-minimal homotopy extension lemma (see Lemma \ref{prop:ext.homotopia}). Finally, applying the semialgebraic Triangulation Theorem we can reduce to realizations of simplicial complexes.

\begin{proof}[Proof of Proposition \ref{prop:reduc4}]First note that we can assume that $L$ is the first barycentric subdivision of some simplicial complex. Let $[f]\in [|K|,|L|]_{h}^{\mathcal{R}}$. We will find a semialgebraic map definably homotopic to $f$ with relative to $h$. By the proof of Theorem 4.2 of Chapter III in \cite{85DK}, p.254, we can assume that 

(a) there exist two closed subcomplexes
$K_{D}$ and $K_{E}$ of $K$ such that $$|K_{C}|\subset \I_{|K|}(|K_{E}|)\subset|K_{E}|\subset \I_{|K|}(|K_{D}|),\textrm{ and}$$

(b) the map $f$ satisfies $f|_{|K_{D}|}=\widetilde{h}$, where
$\widetilde{h}:|K_{D}|\rightarrow |L|$ is a semialgebraic map such that $\widetilde{h}|_{|K_{C}|}=h$ and for each simplex $\sigma \in K_{D}$ there is a simplex of $L$ containing $\widetilde{h}(\sigma)$.

As above, even though the statement of Theorem 4.2 in \cite{85DK} differs from ours, since extensions of real closed fields are considered there, the proof of the fact that we can assume (a) and (b) apply to our context using the o-minimal homotopy extension lemma (see Lemma \ref{prop:ext.homotopia}) instead of the semialgebraic one. These assumptions allow us to protect $|K_C|$ with two successives ''barriers'', $|K_D|$ and $|K_E|$. We shall use these barriers in two different places in the following proof to transform the map $f$ without modifying it on $|K_C|$.

We divide the proof in two steps. In Step 1 we will make use of the Normal triangulation theorem (see Fact \ref{fact:normal}) to show that there exists a definable map $g$ satisfying the star condition such that $f\thicksim_h g$. In Step 2 we will use a simplicial approximation to $g$ (whose existence is ensured by the star condition) to find a semialgebraic map definably homotopic to $f$ relative to $h$.

\emph{Step 1: }Let $K_{Z}$ be the closed subcomplex of $K$ whose polyhedron is $|K_{Z}|=|K|\setminus \I_{|K|}(|K_{D}|)$. By the Normal Triangulation Theorem (see Fact \ref{fact:normal}) there exists a normal triangulation $(K_{0},\phi_{0})$ of $K_Z \cup K_E$ partitioning $f^{-1}(\sigma)\cap |K_{Z}|$, $\sigma\in L$. Moreover, since $|K_{E}|\cap |K_{Z}|= \emptyset$ and $(K_E,\text{id})$ is a normal triangulation of $K_E$, we can assume that $\phi_{0}|_{|K_{E}|}=\text{id}$. Next we extend $(K_0,\phi_0)$ to a triangulation of the whole of $|K|$. By Lemma \ref{prop:extnormal} there exists a normal triangulation $(K',\phi')$ of $K$ such that $K_{0}\subset K'$ and
$\phi'|_{K_{0}}=\phi_{0}$. In particular, $\phi'|_{|K_{E}|}=\text{id}.$
Note that $(K',\phi')$ partitions the sets $f^{-1}(\sigma)$,
$\sigma \in L$. Indeed, it suffices to show that for each $\sigma'\in K'$, $\phi'(\sigma')$ is contained in the preimage by $f$ of some simplex of $L$. If $\sigma'\subset |K_Z|$ this is clear since $\phi'$ extends $\phi_0$, which in turn partitions the subsets $f^{-1}(\sigma)\cap |K_{Z}|$ for $\sigma\in L$. On the other hand, if $\sigma'\subset |K|\setminus|K_{Z}|\subset |K_D|$ then $\phi'(\sigma')$ is contained in some simplex of $K_{D}$ because $(K',\phi')$ partitions the simplices of $K$ and, by (b), each simplex of $K_D$ is contained in the preimage by $f$ of some simplex of $L$.

By Fact \ref{fact:normal}, $\phi'$ and $id_{|K'|}$ are definably homotopic via the canonical homotopy $H_1:|K'|\times I\rightarrow |K|:(x,s)\mapsto (1-s)x+s\phi'(x)$. Let $H_2=f\circ H_1$. Since  $H_1(x,s)=x$ for all $x\in |K_{E}|$ and $t\in I$, $H_2$ is a homotopy between $f$ and $g:=f\circ \phi'$ relative to $\widetilde{h}|_{|K_{E}|}$. Note also that since $(K',\phi')$ partitions $f^{-1}(\tau)$ for $\tau\in L$ we have that for every $\sigma\in K'$ there exists $\tau\in L$ such that $g(\sigma)\subset \tau$. This implies that for every $v\in Vert(K')$ there exist $w\in Vert(L)$ with $w\in L$ such that $g(\Stc_{K'}(v))\subset \Stc_{L}(w)$. Indeed, take $v\in \V(K')$ and $\tau \in L$ such that $g(v)\in \tau$. Since $L$ is the first barycentric subdivision of some simplicial complex, there exists a vertex $w$ of $\tau$ with $w\in L$. Since $g^{-1}(\Stc_{L}(w))$ is the realization of a subcomplex of $K'$, it is open in $|K'|$ and contains the vertex $v$, we deduce that $\Stc_{K'}(v)\subset g^{-1}(\Stc_{L}(w))$.

\emph{Step 2: }Consider the map $\mu_{\textrm{vert}}:\V(K') \rightarrow \V(L):v\mapsto \mu_{\textrm{vert}}(v)$, where (as in Step 1) $\mu_{\textrm{vert}}(v)$ is such that $\mu_{\textrm{vert}}(v)\in L$ and $g(\St_{K'}(v))\subset \St_{L}(\mu_{\textrm{vert}}(v))$. By Remark \ref{rmk:simplicialapprox}(ii) the map $\mu_{\textrm{vert}}$ induces a simplicial approximation $\mu$ to $g$. However neither $\mu$ nor the canonical homotopy between $\mu$ and $g$ (see Remark \ref{rmk:simplicialapprox}) are good enough for us  since we need a map definably homotopic  to $f$ relative to $h$. We do as follows. Since $|K_{C}|$ and $|K|\setminus \I_{|K|}(|K_{E}|)$ are closed and disjoint, by Theorem 1.6 in \cite{84DK}, there exists a semialgebraic function $\lambda:|K| \rightarrow [0,1]$ such that
$\lambda^{-1}(0)=|K_{C}|$ and $\lambda^{-1}(1)=|K|\setminus \I_{|K|}(|K_{E}|)$.
Consider the map $H:|K|\times I \rightarrow  |L|:(x,s) \mapsto (1-s\lambda(x))g(x)+s\lambda(x)\mu(x)$.
The definable map $H$ is indeed continuous and is well-defined.
Note that
\begin{displaymath}\left\{
\begin{array}{l}
H(x,0)=g(x)\text{  \  \ for all  \ \ } x\in |K|,\\
H(x,s)=g(x)=h(x)\text{  \  \ for all  \ \ } x\in |K_{C}| \textrm{ and }s\in I.\\
\end{array}\right.
\end{displaymath}
Furthermore, observe that
\begin{displaymath}H(x,1)=\left\{
\begin{array}{l}
\mu(x)\text{  \  \ for all  \ \ } x\in |K|\setminus \I_{|K|}(|K_{E}|),\\
(1-\lambda(x))\widetilde{h}(x)+\lambda(x)\mu(x)\text{  \  \ for all  \ \ } x\in |K_{E}|,\\
\end{array}\right.
\end{displaymath}
is semialgebraic. Hence $f \thicksim_h g \thicksim_h H(x,1)$, as required.
\end{proof}

As an immediate consequence of Theorem \ref{teo:princi} we prove a more general result. 
\begin{cor}\label{cor:princi}Let $(X,A_{1},\ldots,A_{k})$ and $(Y,B_{1},\ldots,B_{k})$ be two systems of semialgebraic sets.  Let $C$ be a relative closed semialgebraic subset of $X$ and $h:C\rightarrow Y$ a semialgebraic map such that $h(C\cap A_{i})\subset B_{i}$, $i=1,\ldots,k$. Then, if the subsets $A_{1},\ldots,A_{k}$ are relative closed in $X$, the map
\begin{small}$$\begin{array}{rcl}
\rho:[(X,A_{1},\ldots,A_{k}),(Y,B_{1},\ldots,B_{k})]_{h}^{\mathcal{R}_{0}}&
 \rightarrow &
[(X,A_{1},\ldots,A_{k}),(Y,B_{1},\ldots,B_{k})]_{h}^{\mathcal{R}} \\

 [f] & \mapsto & [f]
\end{array}$$
\end{small}
is a bijection.
\end{cor}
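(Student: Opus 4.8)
The plan is to deduce Corollary \ref{cor:princi} from Theorem \ref{teo:princi} by reducing the case of systems of subsets to the case of a single subset. The key observation is that a map respecting several pairs $(A_i, B_i)$ simultaneously can be encoded as a map respecting a single pair built from the $A_i$ and $B_i$. First I would recall that by the two reductions quoted after Theorem \ref{teo:princi} (surjectivity suffices, and reduction to the absolute case via the o-minimal homotopy extension lemma), it is enough to establish surjectivity of $\rho$; injectivity will follow from the same argument applied to the system $(X\times I, A_1\times I,\ldots,A_k\times I)$ relative to the homotopy data at the endpoints, exactly as injectivity is derived from surjectivity in the single-pair setting.

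The main step is the reduction from $k$ subsets to one. The natural approach is induction on $k$. Suppose a definable map $f:(X,A_1,\ldots,A_k)\to(Y,B_1,\ldots,B_k)$ with $f|_C=h$ is given; I want to produce a semialgebraic map definably homotopic to $f$ \emph{through maps of systems} relative to $h$. The idea is to view the target condition as landing in an appropriate subspace: a definable homotopy of the system $(X,A_1,\ldots,A_k)\to(Y,B_1,\ldots,B_k)$ is the same as a definable homotopy $X\times I\to Y$ carrying each $A_i\times I$ into $B_i$ while fixing $C$. I would enlarge the relative-closed set on which the map is prescribed: set $C'=C\cup A_1\cup\cdots\cup A_k$ is not closed in general, so instead I would keep $C$ fixed and handle the constraints $f(A_i)\subset B_i$ one at a time, applying Theorem \ref{teo:princi} to the single pair $(X,A_k)$ and target $(Y,B_k)$ with the prescribed map now being the restriction of $f$ to $C\cup A_1\cup\cdots\cup A_{k-1}$ together with the inductive data --- provided that set is closed. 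Since the $A_i$ are only assumed relative closed in $X$ (i.e. closed in $X$ when $X$ itself need not be closed), some care is needed; but the quoted reductions already pass to the case where $X$ is the realization of a closed simplicial complex, in which a relative closed subcomplex is genuinely closed, so the hypothesis of Theorem \ref{teo:princi} that the relevant sets be closed is met.

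Concretely, I expect the cleanest route is as follows. After triangulating so that $X=|K|$ is closed and bounded, each $A_i$ becomes the realization of a closed subcomplex $K_{A_i}$, and likewise $C=|K_C|$. I would then apply Theorem \ref{teo:princi} with the single pair $A=A_1\cup\cdots\cup A_k$ (a closed subcomplex, hence closed in $X$) and $B=B_1\cup\cdots\cup B_k$, but with the map $h$ replaced by the extended prescribed map $h':C\cup\text{(lower-index constraints)}\to Y$. The homotopy extension lemma (Lemma \ref{prop:ext.homotopia}) guarantees that the homotopies respecting the individual $B_i$ can be assembled so that each constraint $A_i\to B_i$ is preserved, because the target restriction $H(A_i\times I)\subset B_i$ can be imposed stratum by stratum on the triangulated homotopy $X\times I$. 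The semialgebraic map produced by Theorem \ref{teo:princi} for the single amalgamated pair, together with the bookkeeping that it maps each $A_i$ into $B_i$, then lies in the source homotopy set and maps to $[f]$, giving surjectivity.

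The hard part will be verifying that the single-pair homotopy produced by Theorem \ref{teo:princi} can be arranged to respect \emph{each} $B_i$ separately, rather than merely their union $B=\bigcup B_i$. A map into $B$ need not preserve the finer partition into the $B_i$, so the naive amalgamation loses information. The remedy is to perform the construction inside the proof of Proposition \ref{prop:reduc4}, where the key geometric move is a normal triangulation: I would triangulate $Y$ (or rather $|L|$) partitioning all the $B_i$ simultaneously, so that the star condition and the subsequent simplicial approximation automatically carry $A_i$ into $B_i$ for every $i$. Since a normal triangulation partitioning finitely many definable sets always exists (Fact \ref{fact:normal}(i)), this refinement costs nothing, and the whole argument of Proposition \ref{prop:reduc4} goes through verbatim with $L$ chosen to partition $B_1,\ldots,B_k$. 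I therefore expect Corollary \ref{cor:princi} to follow immediately once one observes that the proof of the underlying proposition is insensitive to the number of target subsets, so that the genuinely new content is only the verification that normality can be demanded to partition all the $B_i$ at once.
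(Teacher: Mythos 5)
There is a genuine gap, and it concerns the hypothesis you pass over most quickly: in Corollary \ref{cor:princi} the set $X$ is \emph{not} assumed closed and bounded, whereas Theorem \ref{teo:princi} requires it to be. Your claim that ``the quoted reductions already pass to the case where $X$ is the realization of a closed simplicial complex'' is false: triangulating an arbitrary semialgebraic $X$ produces a complex $K$ with $|K|$ semialgebraically homeomorphic to $X$, and if $X$ is not closed and bounded then $K$ is not a closed complex, so Theorem \ref{teo:princi} cannot be applied to $|K|$ directly. The paper's proof spends essentially all of its effort on exactly this point: after the standard reductions one triangulates $X$ partitioning $C$, passes to a first barycentric subdivision so that the cores $\co(K)$ and $\co(K_C)$ are nonempty and satisfy $\co(K_C)=\co(K)\cap K_C$, invokes Proposition III.1.6 of \cite{85DK} to obtain a semialgebraic strong deformation retract of $(|K|,|K_C|)$ onto $(\co(K),\co(K_C))$, and only then applies Theorem \ref{teo:princi} to the closed and bounded polyhedron $|\co(K)|$. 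Without this step (or some substitute for it) your argument never reaches a situation in which Theorem \ref{teo:princi} is applicable, so the proof is incomplete.

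By contrast, the issue you identify as ``the hard part'' --- making a single-pair homotopy respect each $B_i$ separately --- is not where the difficulty lies. The reduction to the absolute case $A_i=B_i=\emptyset$ (the second of the two reductions you yourself invoke, carried out as in the proof of Theorem III.4.2 of \cite{85DK} using Lemma \ref{prop:ext.homotopia}) removes all of the subsets $A_1,\ldots,A_k$ and $B_1,\ldots,B_k$ at once; after it one only has to prove surjectivity of $[X,Y]_h^{\mathcal{R}_0}\rightarrow [X,Y]_h^{\mathcal{R}}$, with no system of subsets left. Your proposed amalgamation $A=\bigcup_i A_i$, $B=\bigcup_i B_i$ and the re-run of Proposition \ref{prop:reduc4} with $L$ partitioning all the $B_i$ are therefore unnecessary (and the amalgamation, as you note yourself, would lose information if it were actually needed). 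The genuinely new content of the corollary is the removal of the closedness and boundedness hypothesis on $X$, and that is the step your proposal does not supply.
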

\begin{proof}First note that the hypotheses of Lemma \ref{prop:ext.homotopia} do not include $X$ closed. Using that lemma (instead of its semialgebraic analogue) it can be shown (as in the proof of Theorem 3.1) that is enough to prove that $\rho_1:[X,Y]_{h}^{\mathcal{R}_{0}}\rightarrow [X,Y]_{h}^{\mathcal{R}}:[f]\mapsto [f]$
is surjective. Let $(K,\phi)$ be a semialgebraic triangulation of $X$ partitioning $C$. Now, let $K_{C}=\{\sigma \in K:\phi(\sigma)\subset C\}$. We may assume that $K$ is the first barycentric subdivision of some simplicial complex. Since $\phi$ is a semialgebraic homeomorphism, it is enough to prove that the map $\rho_2:[|K|,Y]_{\widetilde{h}}^{\mathcal{R}_{0}} \rightarrow
[|K|,Y]_{\widetilde{h}}^{\mathcal{R}}:[f]\mapsto [f]$ is surjective, where
$\widetilde{h}=h\circ \phi|_{|K_{C}|}$. Observe that since $K_{C}$
is a relative closed subcomplex of $K$ then $\co(K_{C})=\co(K)\cap K_C$ (recall that the core $\co(K)$ of a simplicial complex $K$ is the unique maximal subcomplex of $K$ whose realization is closed in the ambient space). Since $K$ is the first barycentric subdivision of some simplicial complex, $\co(K)$ and $\co(K_{C})$ are not empty. By Proposition III.1.6 in \cite{85DK}, there exists a semialgebraic strong deformation retract of $(|K|,|K_{C}|)$ to $\co(K,K_{C}):=(\co(K),\co(K_C))$. Now the proof of Theorem 4.2 of Chapter III in \cite{85DK}, pp.253, applies in our context (using Lemma \ref{prop:ext.homotopia} instead of the semialgebraic homotopy extension lemma) and therefore, it is enough to prove that $\rho_3:[|\co(K)|,Y]_{\hat{h}}^{\mathcal{R}_{0}} \rightarrow
[|\co(K)|,Y]_{\hat{h}}^{\mathcal{R}}:[f]\mapsto [f]$ is surjective, where $\hat{h}=\widetilde{h}|_{|\co(K_C)|}$. Finally, since $|\co(K)|$ is closed and bounded, $\rho_3$ is surjective by Theorem \ref{teo:princi}.
\end{proof}
\begin{cor}\label{cor:ominreal}Let $X$ and $Y$ be two pairs of semialgebraic sets defined without parameters. Then there exist a bijection
$$\rho:[X(\mathbb{R}),Y(\mathbb{R})] \rightarrow [X,Y]^{\mathcal{R}},$$
where $[X(\mathbb{R}),Y(\mathbb{R})]$ denotes the classical homotopy set. Moreover, if $R$ contains the real field, then the result remains true for all semialgebraic sets $X$ and $Y$  defined with parameters in $\mathbb{R}$.
\end{cor}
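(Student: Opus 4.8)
The plan is to reduce the transfer statement for homotopy \emph{sets} of semialgebraic sets over the reals to the already-established bijection $\rho\colon[X,Y]^{\mathcal{R}_0}\to[X,Y]^{\mathcal{R}}$ of Corollary \ref{cor:princi}. The key observation is that over the \emph{specific} o-minimal structure given by the real field $\mathbb{R}$, the notion of a definable (continuous) map and of a definable homotopy coincides with the classical notion of a continuous map and a classical homotopy, simply because every semialgebraic map is in particular continuous and every continuous semialgebraic map defined over $\mathbb{R}$ \emph{is} definable in the real semialgebraic structure. Thus $[X(\mathbb{R}),Y(\mathbb{R})]^{\mathbb{R}_{\mathrm{sa}}}=[X(\mathbb{R}),Y(\mathbb{R})]$, where on the right we mean the \emph{classical} homotopy set of the topological spaces $X(\mathbb{R})$ and $Y(\mathbb{R})$. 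The content to be supplied is precisely the verification that semialgebraic homotopy agrees with topological homotopy over the reals.

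First I would fix a semialgebraic triangulation of $X$ and $Y$ defined without parameters, so that by the semialgebraic Triangulation Theorem we may replace $X,Y$ by realizations of simplicial complexes $|K|,|L|$; this is legitimate because triangulations defined without parameters specialize simultaneously over $\mathbb{R}$ and over any $R$. Next I would set up a square of maps: Corollary \ref{cor:princi} (with $C=\emptyset$) gives a bijection $[X,Y]^{\mathcal{R}_0}\to[X,Y]^{\mathcal{R}}$ relating the semialgebraic homotopy set of the abstract real closed field $R_0$ to the o-minimal one. Applying this with $\mathcal{R}$ instantiated as the real semialgebraic structure $\mathbb{R}_{\mathrm{sa}}$, whose underlying real closed field is $\mathbb{R}$ itself, identifies $[X,Y]^{\mathbb{R}_{\mathrm{sa}}}$ with $[X(\mathbb{R}),Y(\mathbb{R})]^{\mathbb{R}_{\mathrm{sa}}}$. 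It then remains to match the o-minimal homotopy set $[X,Y]^{\mathcal{R}}$ over our given structure $\mathcal{R}$ with the semialgebraic set $[X,Y]^{\mathcal{R}_0}$, which is exactly Corollary \ref{cor:princi} again, together with the identification of the semialgebraic homotopy classes over $\mathbb{R}$ with classical homotopy classes.

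The crucial step, and the one I expect to carry the real weight, is the identification $[X(\mathbb{R}),Y(\mathbb{R})]^{\mathbb{R}_{\mathrm{sa}}}\cong[X(\mathbb{R}),Y(\mathbb{R})]$ between semialgebraic and topological homotopy classes over the real field. For \emph{surjectivity} of the natural map (every classical homotopy class contains a semialgebraic representative, and every classical homotopy can be replaced by a semialgebraic one), I would invoke the standard simplicial/semialgebraic approximation machinery of Delfs--Knebusch: any continuous map $|K(\mathbb{R})|\to|L(\mathbb{R})|$ is homotopic to a simplicial one, which is automatically semialgebraic, and the approximating homotopy can likewise be taken simplicial hence semialgebraic. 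For \emph{injectivity}, two semialgebraic maps that are topologically homotopic must be shown to be semialgebraically homotopic; again one replaces the topological homotopy by a simplicial approximation rel the endpoints, using the homotopy extension property. This is precisely the content of the comparison results in Chapter III of \cite{85DK}, so the argument amounts to citing that semialgebraic homotopy over $\mathbb{R}$ computes the classical homotopy type.

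Finally, for the ''moreover'' clause, when $R\supseteq\mathbb{R}$ and $X,Y$ are defined with parameters in $\mathbb{R}$, I would note that such $X,Y$ are realizations over $R$ of semialgebraic sets defined over $\mathbb{R}$, so $X(\mathbb{R})$ and $Y(\mathbb{R})$ make sense and the previous chain of bijections applies verbatim: Corollary \ref{cor:princi} gives $[X,Y]^{\mathcal{R}_0}\cong[X,Y]^{\mathcal{R}}$, and the real--real comparison gives $[X,Y]^{\mathcal{R}_0}\cong[X(\mathbb{R}),Y(\mathbb{R})]$. The one point requiring care here is that the parameters used to define $X$ and $Y$ lie in $\mathbb{R}\subseteq R$, so that realizing over $\mathbb{R}$ is meaningful and the triangulations transfer; this is exactly why the hypothesis $R\supseteq\mathbb{R}$ is imposed. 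Composing the bijections yields the desired $\rho\colon[X(\mathbb{R}),Y(\mathbb{R})]\to[X,Y]^{\mathcal{R}}$.
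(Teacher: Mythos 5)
There is a genuine gap in the middle of your chain of bijections. You correctly isolate two comparisons: (1) classical homotopy versus semialgebraic homotopy over $\mathbb{R}$, and (2) semialgebraic homotopy over $R_0$ versus o-minimal homotopy over $\mathcal{R}$ (which is Theorem \ref{teo:princi}/Corollary \ref{cor:princi}). But you never supply the link between them, namely a bijection between the semialgebraic homotopy set over $\mathbb{R}$ and the semialgebraic homotopy set over $R_0$. These are two \emph{different} real closed fields which in the first part of the statement are in general incomparable ($R$ need not contain $\mathbb{R}$, and $\mathbb{R}$ need not embed in $R$). Your attempt to extract this from Corollary \ref{cor:princi} ``instantiated at $\mathbb{R}_{\mathrm{sa}}$'' produces only a tautology: that corollary compares an o-minimal structure with its own field reduct over the \emph{same} underlying field, so taking $\mathcal{R}=\mathbb{R}_{\mathrm{sa}}$ gives the identity map $[X(\mathbb{R}),Y(\mathbb{R})]^{\mathbb{R}_{\mathrm{sa}}}\to[X(\mathbb{R}),Y(\mathbb{R})]^{\mathbb{R}_{\mathrm{sa}}}$ and carries no information about $R_0$.

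The missing ingredient is the invariance of semialgebraic homotopy sets under extension of real closed fields (Delfs--Knebusch, Theorem III.4.1), and the reason the hypothesis ``defined without parameters'' is imposed is precisely to make this applicable: $X$ and $Y$ are then defined over the real algebraic numbers $\overline{\mathbb{Q}}$, which is the common prime real closed subfield of both $\mathbb{R}$ and $R_0$. The paper's proof runs
$[X(\mathbb{R}),Y(\mathbb{R})]\cong[X(\overline{\mathbb{Q}}),Y(\overline{\mathbb{Q}})]^{\overline{\mathbb{Q}}}$ (Theorem III.5.1 of \cite{85DK}, which is the rigorous form of your simplicial-approximation comparison), then $[X(\overline{\mathbb{Q}}),Y(\overline{\mathbb{Q}})]^{\overline{\mathbb{Q}}}\cong[X,Y]^{\mathcal{R}_0}$ (Theorem III.4.1, base change along $\overline{\mathbb{Q}}\subseteq R_0$), and finally $[X,Y]^{\mathcal{R}_0}\cong[X,Y]^{\mathcal{R}}$ by Theorem \ref{teo:princi}. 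The same issue affects your ``moreover'' clause: when $R\supseteq\mathbb{R}$ you still need Theorem III.4.1 for the extension $\mathbb{R}\subseteq R_0$ to relate $[X(\mathbb{R}),Y(\mathbb{R})]^{\mathbb{R}_{\mathrm{sa}}}$ to $[X,Y]^{\mathcal{R}_0}$; calling it ``the real--real comparison'' does not justify it. Your first comparison (classical versus semialgebraic over $\mathbb{R}$) is essentially right in spirit, but the field base-change step cannot be omitted or replaced by Corollary \ref{cor:princi}.
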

\begin{proof}By Theorem III.5.1 in \cite{85DK}, there exits a canonical bijection between $[X(\mathbb{R}),Y(\mathbb{R})]$ and the semialgebraic homotopy set over the real algebraic numbers $[X(\overline{\mathbb{Q}}),Y(\overline{\mathbb{Q}})]^{\overline{\mathbb{Q}}}$. By  Theorem III.4.1 in \cite{85DK}, there exists a canonical bijection  between $[X(\overline{\mathbb{Q}}),Y(\overline{\mathbb{Q}})]^{\overline{\mathbb{Q}}}$ and $[(X,A),(Y,B)]^{\mathcal{R}_0}$. The result then follows by Theorem \ref{teo:princi}. The proof of the second part is similar.
\end{proof}
\begin{obs}This corollary remains true for systems of semialgebraic sets satisfying the hypotheses of Corollary \ref{cor:princi}.
\end{obs}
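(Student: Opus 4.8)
The plan is to repeat, for systems, the three-step chain of canonical bijections used to prove Corollary~\ref{cor:ominreal}, replacing its last ingredient by the systems analogue already at our disposal. Write $\underline{X}=(X,A_1,\dots,A_k)$ and $\underline{Y}=(Y,B_1,\dots,B_k)$ for the two systems of semialgebraic sets, all defined without parameters and with each $A_i$ relative closed in $X$ (so that the hypotheses of Corollary~\ref{cor:princi} hold with $C=\emptyset$); for a real closed field $K$ put $\underline{X}(K)=(X(K),A_1(K),\dots,A_k(K))$, and similarly for $\underline{Y}(K)$. The goal is to produce a bijection
$$\rho\colon[\underline{X}(\mathbb{R}),\underline{Y}(\mathbb{R})]\longrightarrow[\underline{X},\underline{Y}]^{\mathcal{R}}.$$

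First I would invoke the system versions of the two Delfs--Knebusch comparison theorems. The homotopy theory of \cite{85DK} is developed for systems of affine semialgebraic sets, so Theorems III.5.1 and III.4.1 there yield, exactly as in the proof of Corollary~\ref{cor:ominreal}, canonical bijections
$$[\underline{X}(\mathbb{R}),\underline{Y}(\mathbb{R})]\;\cong\;[\underline{X}(\overline{\mathbb{Q}}),\underline{Y}(\overline{\mathbb{Q}})]^{\overline{\mathbb{Q}}}\;\cong\;[\underline{X},\underline{Y}]^{\mathcal{R}_0},$$
the first comparing classical homotopy over $\mathbb{R}$ with semialgebraic homotopy over the real algebraic numbers, and the second being invariance of semialgebraic homotopy under the real closed field extension $\overline{\mathbb{Q}}\subseteq R$.

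The last step is the transfer from $\mathcal{R}_0$ to $\mathcal{R}$. Here I would simply replace the appeal to Theorem~\ref{teo:princi} in the proof of Corollary~\ref{cor:ominreal} by Corollary~\ref{cor:princi}, which is precisely the systems version of that theorem and provides the bijection $[\underline{X},\underline{Y}]^{\mathcal{R}_0}\cong[\underline{X},\underline{Y}]^{\mathcal{R}}$. Composing the three bijections gives the required $\rho$. For the parameter version, when $R$ contains the real field, the same chain applies verbatim to systems defined with parameters in $\mathbb{R}$, again with Corollary~\ref{cor:princi} in the role of Theorem~\ref{teo:princi}.

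The one point needing care --- and the only place I expect any obstacle --- is the compatibility of hypotheses along the chain: one must check that the relative closedness of the $A_i$ required by Corollary~\ref{cor:princi} is preserved under the base changes from $\mathbb{R}$ to $\overline{\mathbb{Q}}$ to $R$, and that the Delfs--Knebusch comparison theorems are indeed available for systems under only this relative (rather than absolute) closedness assumption. The first is immediate: since $X$ and the $A_i$ are defined without parameters, ``$A_i$ is relative closed in $X$'' is a first-order condition in the language of ordered fields, so by completeness of the theory of real closed fields it holds over one such field iff it holds over all of them. The second is where I would read the statements of III.5.1 and III.4.1 most carefully, but I anticipate no genuine difficulty, since their proofs in \cite{85DK} are carried out in the generality of systems.
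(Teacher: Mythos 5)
Your proposal is correct and follows exactly the route the paper intends: the paper leaves this remark without proof precisely because the argument is the proof of Corollary \ref{cor:ominreal} repeated verbatim for systems, with the Delfs--Knebusch comparison theorems (III.5.1 and III.4.1 of \cite{85DK}, which are stated there for systems) supplying the first two bijections and Corollary \ref{cor:princi} replacing Theorem \ref{teo:princi} in the final transfer step. Your added check that relative closedness of the $A_i$ transfers between real closed fields is a sensible precaution, though it is not an issue the paper dwells on.
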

\begin{cor}\label{cor:sinpar}Let $X$ and $Y$ be two definable sets defined without parameters. Then any definable map $f:X\rightarrow Y$ is definably homotopic to a definable map $g:X\rightarrow Y$ defined without parameters. If moreover $X$ and $Y$ are semialgebraic then g can also be taken semialgebraic.
\end{cor}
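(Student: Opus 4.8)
The plan is to reduce the general definable case to the case where $X$ and $Y$ are semialgebraic sets defined without parameters, and then to invoke the transfer results already established. The key observation is that the relation ``definably homotopic'' is preserved under pre- and post-composition by homeomorphisms defined without parameters: if $u\colon X'\to X$ and $v\colon Y\to Y'$ are $\emptyset$-definable homeomorphisms and $f\sim g$, then $v\circ f\circ u\sim v\circ g\circ u$, and $v\circ g\circ u$ is defined without parameters whenever $g$ is. Hence it suffices to prove the statement after replacing the pair $(X,Y)$ by any pair obtained from it through $\emptyset$-definable homeomorphisms; I would choose such a pair consisting of semialgebraic polyhedra defined without parameters.

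To build this reduction I would first apply the o-minimal triangulation theorem to $X$ and to $Y$. Since $X$ is definable without parameters, the triangulation can be carried out over $\emptyset$, yielding an $\emptyset$-definable homeomorphism $\phi\colon|K|\to X$ with $K$ a simplicial complex whose (finite) vertex set is $\emptyset$-definable. Because in an o-minimal structure $\mathrm{acl}(\emptyset)=\mathrm{dcl}(\emptyset)$, each vertex of $K$ lies in $\mathrm{dcl}(\emptyset)$, so it is itself defined without parameters. Consequently the canonical piecewise-linear homeomorphism $\pi_K\colon|K|\to|K_{\mathrm{std}}|$ onto the standard geometric realization $K_{\mathrm{std}}$ (whose vertices are the standard basis vectors, so with integer coordinates) is affine with $\mathrm{dcl}(\emptyset)$-coefficients on each simplex, hence $\emptyset$-definable, while $|K_{\mathrm{std}}|$ is a semialgebraic set defined without parameters. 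Doing the same for $Y$ via $\psi\colon|L|\to Y$ and $\pi_L$, and transporting $f$ to $f^{\sharp}:=\pi_L\circ\psi^{-1}\circ f\circ\phi\circ\pi_K^{-1}\colon|K_{\mathrm{std}}|\to|L_{\mathrm{std}}|$, reduces everything to finding an $\emptyset$-definable representative in the homotopy class of $f^{\sharp}$, where now domain and target are semialgebraic and defined without parameters.

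For this semialgebraic case I would argue as in Corollary \ref{cor:ominreal}. By Corollary \ref{cor:princi} the natural map $[\,|K_{\mathrm{std}}|,|L_{\mathrm{std}}|\,]^{\mathcal{R}_0}\to[\,|K_{\mathrm{std}}|,|L_{\mathrm{std}}|\,]^{\mathcal{R}}$ is a bijection, so $f^{\sharp}$ is definably homotopic to a semialgebraic map $f'$ (a priori defined with parameters in $R$). Applying the Delfs--Knebusch base-change theorem (Theorem III.4.1 in \cite{85DK}), which gives a bijection between the semialgebraic homotopy set over $\overline{\Q}$ and the one over $R$, the class of $f'$ contains the base change of a semialgebraic map $g$ defined over $\overline{\Q}$; since algebraic numbers are $\emptyset$-definable in the field language, $g$ is defined without parameters, and $f^{\sharp}\sim f'\sim g$. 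Transporting $g$ back through the $\emptyset$-definable homeomorphisms produces the required $g\colon X\to Y$ defined without parameters with $f\sim g$. For the final assertion, if $X$ and $Y$ are themselves semialgebraic one runs the same argument using semialgebraic (rather than merely definable) triangulations, so that $\phi,\psi,\pi_K,\pi_L$ are all semialgebraic and the transported map is semialgebraic as well.

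The main obstacle I anticipate is the first reduction, namely making the triangulation and the passage to the standard realization genuinely parameter-free: one must check that the o-minimal triangulation of an $\emptyset$-definable set can be produced over $\emptyset$ (so that its vertices form an $\emptyset$-definable finite set, and hence lie in $\mathrm{dcl}(\emptyset)$), and that the resulting combinatorial identification with an integer-coordinate model is realized by an $\emptyset$-definable homeomorphism. Once this is secured, the remaining descent of parameters down to $\overline{\Q}$ is furnished by the semialgebraic machinery of \cite{85DK} together with Corollary \ref{cor:princi}, and no further o-minimal input is required.
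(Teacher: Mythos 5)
Your proposal is correct and follows essentially the same route as the paper: reduce to the semialgebraic case via triangulations defined without parameters, obtain a semialgebraic representative of the homotopy class by Theorem \ref{teo:princi}, and then descend the parameters to $\overline{\mathbb{Q}}$ by the Delfs--Knebusch base-change results for semialgebraic homotopy sets. The extra care you take (using $\mathrm{acl}(\emptyset)=\mathrm{dcl}(\emptyset)$ and passing to a standard integer-coordinate realization) is just a more explicit justification of the paper's one-line claim that the triangulations can be taken without parameters.
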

\begin{proof}By the Triangulation Theorem there are triangulations of $X$ and $Y$ defined without parameters and therefore it suffices to prove the case in which both $X$ and $Y$ are semialgebraic. By Theorem \ref{teo:princi}, $f$ is definably homotopic to a semialgebraic map $g_1$. Finally, by Theorem III.3.1 in \cite{85DK} applied to the real algebraic numbers $\overline{\mathbb{Q}}$ and $\mathcal{R}_0$, $g_1$ is semialgebraically homotopic to a semialgebraic map $g$ defined without parameters.
\end{proof}

\section{The o-minimal homotopy groups}\label{shomotgr}

We begin this section with a general discussion of homotopy groups in the o-minimal setting. Then we will relate the semialgebraic and the o-minimal homotopy groups via our Theorem \ref{teo:princi}. Finally, we will prove the usual properties related to homotopy in the o-minimal framework.

We will work with the category whose objects are the definable pointed sets, i.e., $(X,x_{0})$, where $X$ is a definable set with $x_{0}\in X$, and whose morphisms are the definable continuous maps between definable pointed sets. In a similar way, we define the categories of definable pointed pairs, i.e., $(X,A,x_0)$, where $X$ is a definable set, $A$ is a definable subset of $X$ and $x_0\in A$.

Let $(X,x_{0})$ be a definable pointed set. The \textbf{o-minimal homotopy group} of dimension $n$, $n\geq 1$, is the set
$\pi_{n}(X,x_{0})^{\mathcal{R}}=[(I^{n},\partial
I^{n}),(X,x_{0})]^{\mathcal{R}}$. We define
$\pi_{0}(X,x_{0})$ as the set of definably connected components of $X$. The \textbf{o-minimal relative homotopy group} of dimension $n$, $n\geq 1$, of a definable pointed pair $(X,A,x_{0})$ is the homotopy set
$\pi_{n}(X,A,x_{0})^{\mathcal{R}}=[(I^{n},I^{n-1},J^{n-1}),(X,A,x_{0})]^{\mathcal{R}}$,
where $I^{n-1}=\{(t_{1},\ldots,t_{n})\in I^{n}:t_{n}=0 \}$ and
$J^{n-1}=\overline{\partial I^{n}\setminus I^{n-1}}$. 

As in the classical case, we can define a group operation in the o-minimal homotopy groups $\pi_{n}(X,x_{0})^{\mathcal{R}}$ and 
$\pi_{m}(X,A,x_{0})^{\mathcal{R}}$ via the usual sum of maps for $n\geq 1$ and $m\geq 2$. Moreover, these groups are abelian for $n\geq 2$ and $m\geq 3$ (see pp. 340 and pp. 343 in \cite{02H}). Also, given a definable map between definable pointed sets (or pairs), we define the induced map in homotopy by the usual composing, which will be a homomorphism in the case we have a group structure.
It is easy to check that with these definitions of o-minimal homotopy group and induced map, both the absolute and relative o-minimal homotopy groups $\pi_n(-)$ are covariant functors (see pp. 342 in \cite{02H}).

As a consequence of Theorem \ref{teo:princi}, we deduce the following relation between the semialgebraic and the o-minimal homotopy groups.
\begin{teo}\label{rhoesismorfis}For every semialgebraic pointed set $(X,x_0)$ and every $n\geq 1$, the map $\rho:\pi_{n}(X,x_0)^{\mathcal{R}_{0}}\rightarrow \pi_{n}(X,x_0)^{\mathcal{R}}:[f]\mapsto [f]$, is a natural isomorphism. 
\end{teo}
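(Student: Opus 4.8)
The plan is to reduce everything to Theorem~\ref{teo:princi}, which already does all the substantive work, and then check the two remaining formal points: that $\rho$ respects the group operation and that it is natural. First I would establish that $\rho$ is a bijection of sets. By definition $\pi_{n}(X,x_{0})^{\mathcal{R}}=[(I^{n},\partial I^{n}),(X,\{x_{0}\})]^{\mathcal{R}}$, and similarly over $\mathcal{R}_{0}$. I would apply Theorem~\ref{teo:princi} with the source pair $(I^{n},\partial I^{n})$, the target pair $(X,\{x_{0}\})$, and $C=\emptyset$ (so that no reference to $h$ is needed, the hypothesis $h(A\cap C)\subset B$ being vacuous). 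The hypotheses are met: $I^{n}$ is closed and bounded, $\partial I^{n}$ is closed in $I^{n}$, and both $X$ and the singleton $\{x_{0}\}$ are semialgebraic since $(X,x_{0})$ is a semialgebraic pointed set. Hence the map $\rho$ of Theorem~\ref{teo:princi}, which sends $[f]\mapsto[f]$, is exactly the map of the statement and is a bijection.

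Next I would verify that $\rho$ is a group homomorphism for every $n\geq 1$. The group operation on $\pi_{n}(X,x_{0})^{\mathcal{R}_{0}}$ and on $\pi_{n}(X,x_{0})^{\mathcal{R}}$ is given by the very same concatenation formula in the first coordinate, $[f]+[g]=[f+g]$. The key observation is that the sum $f+g$ of two semialgebraic representatives $f,g$ is again semialgebraic, being defined piecewise by $f$ and $g$ after an affine reparametrization and glued along the semialgebraic hyperplane $\{t_{1}=1/2\}$. Consequently $\rho([f]+[g])=\rho([f+g])=[f+g]=[f]+[g]=\rho([f])+\rho([g])$, where in each equality $\rho$ acts as the identity on the common representative. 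Since $\rho$ is a bijective homomorphism, it is a group isomorphism.

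Finally I would check naturality. The relevant functors go from the category of semialgebraic pointed sets (with semialgebraic base-point-preserving maps) to groups, the induced map of a morphism $\alpha$ being $\alpha_{*}[f]=[\alpha\circ f]$ in each setting. For a semialgebraic $\alpha\colon(X,x_{0})\rightarrow(X',x_{0}')$ the composite $\alpha\circ f$ is semialgebraic whenever $f$ is, so both $\rho\circ\alpha_{*}$ and $\alpha_{*}\circ\rho$ send $[f]$ to $[\alpha\circ f]$; the naturality square therefore commutes on the nose. I do not expect a genuine obstacle here: all the difficulty is concentrated in Theorem~\ref{teo:princi}, and the only things to confirm are that its hypotheses hold in the present situation and that the explicit semialgebraic formulas defining the group law and the induced maps keep one inside the semialgebraic world, so that $\rho$ transports them identically.
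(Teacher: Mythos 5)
Your proposal is correct and follows essentially the same route as the paper: bijectivity comes from Theorem~\ref{teo:princi} applied to the pairs $(I^{n},\partial I^{n})$ and $(X,\{x_{0}\})$ with $C=\emptyset$, while the homomorphism and naturality properties are immediate because $\rho$ acts as the identity on representatives and the group law and induced maps are given by the same semialgebraic formulas in both settings. You merely spell out in more detail the steps the paper dismisses as ``clearly a homomorphism'' and ``by definition the diagram commutes.''
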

\begin{proof}By Theorem \ref{teo:princi} $\rho$ is a bijection and its clearly a homomorphism. For the naturality condition, just observe that by definition the following diagram
\vspace{-0.3cm}
\begin{displaymath}
\xymatrix{ \pi_{n}(X,x_{0})^{\mathcal{R}_{0}} \ar[r]^{\psi_{*}}
\ar[d]_{\rho} &
 \pi_{n}(Y,y_{0})^{\mathcal{R}_{0}}     \ar[d]^{\rho}\\
\pi_{n}(X,x_{0})^{\mathcal{R}} \ar[r]_{\psi_{*}} &
\pi_{n}(Y,y_{0})^{\mathcal{R}}}
\end{displaymath}
commutes,  for every semialgebraic map $\psi:(X,x_{0})\rightarrow (Y,y_{0})$.
\end{proof}
\begin{obs}This last result remains true in the relative case and its proof is similar. 
\end{obs}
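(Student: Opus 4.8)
The plan is to mirror the proof of Theorem \ref{rhoesismorfis}, replacing Theorem \ref{teo:princi} by its systems version, Corollary \ref{cor:princi}, which is exactly what the extra distinguished subset in the relative setting requires. The first step is to observe that the relative homotopy group is, by definition, a homotopy set of systems: writing $\pi_n(X,A,x_0)^{\mathcal{R}}=[(I^n,I^{n-1},J^{n-1}),(X,A,x_0)]^{\mathcal{R}}$, the domain is the system of semialgebraic sets $(I^n,I^{n-1},J^{n-1})$ and the target is the system $(X,A,\{x_0\})$, a map of triples being precisely a map sending $I^{n-1}$ into $A$ and $J^{n-1}$ into $\{x_0\}$.

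To conclude that $\rho$ is a bijection I would apply Corollary \ref{cor:princi} with $C=\emptyset$, so that no relative datum $h$ intervenes. Its hypotheses reduce here to checking that the distinguished subsets of the domain are relative closed, which is immediate: $I^{n-1}=\{t_n=0\}$ and $J^{n-1}=\overline{\partial I^n\setminus I^{n-1}}$ are both closed in $I^n$. This gives at once that $\rho:\pi_n(X,A,x_0)^{\mathcal{R}_0}\rightarrow\pi_n(X,A,x_0)^{\mathcal{R}}$ is a bijection for every $n\geq 1$.

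It then remains to handle the algebraic structure. For $n\geq 2$ the relative homotopy group carries the group operation given by the usual sum of maps, and this operation is defined by the same semialgebraic concatenation formula over $\mathcal{R}_0$ and over $\mathcal{R}$; since $\rho$ acts as the identity on representatives, $\rho([f]+[g])=[f+g]=[f]+[g]$, so $\rho$ is a group isomorphism (for $n=1$ one obtains only a natural bijection of pointed sets). Naturality is verified exactly as in the absolute case: for a semialgebraic map $\psi:(X,A,x_0)\rightarrow(Y,B,y_0)$ the induced map $\psi_*$ is given by composition, so both ways around the commuting square send $[f]$ to $[\psi\circ f]$.

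The only point requiring genuine care, and the reason for passing from Theorem \ref{teo:princi} to Corollary \ref{cor:princi}, is the bookkeeping involved in fitting the relative homotopy set into the systems framework: one must match the two distinguished subsets $I^{n-1},J^{n-1}$ of the domain with $A,\{x_0\}$ of the target and confirm they satisfy the relative-closedness hypothesis. Once this verification is in place, the homomorphism and naturality arguments are purely formal and identical to those of Theorem \ref{rhoesismorfis}.
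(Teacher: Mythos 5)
Your proof is correct and follows the route the paper intends: the relative homotopy set is by definition a homotopy set of systems, so the bijectivity of $\rho$ comes from Corollary \ref{cor:princi} (applied with $C=\emptyset$, after noting that the distinguished subsets $I^{n-1}$ and $J^{n-1}$ are closed in $I^n$), and the homomorphism and naturality checks are formal exactly as in Theorem \ref{rhoesismorfis}. The paper gives no details beyond ``the proof is similar,'' and your write-up supplies precisely the bookkeeping that justifies that claim.
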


Moreover, the following result,  which follows from Corollary \ref{cor:ominreal}, is an extension of Theorem 1.1 in \cite{02BeO} (where the case $n=1$ is treated) and gives us a relation between the classical and the o-minimal homotopy groups.
\begin{cor}\label{cor:homtopiadefigualtopo}Let $(X,x_{0})$ be a semialgebraic pointed set defined without parameters. Then there exists a natural isomorphism between the classical homotopy group
$\pi_{n}(X(\mathbb{R}),x_{0})$ and the o-minimal homotopy group \linebreak
$\pi_{n}(X(R),x_{0})^{\mathcal{R}}$ for every $n\geq 1$.
\end{cor}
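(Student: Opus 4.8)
The plan is to deduce the statement from Corollary \ref{cor:ominreal} together with Theorem \ref{rhoesismorfis}; the only work beyond invoking these is to upgrade the bijection of homotopy \emph{sets} coming from Corollary \ref{cor:ominreal} to an \emph{isomorphism} of homotopy groups, and to check naturality.

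First I would unwind the definitions. By definition $\pi_n(X(R),x_0)^{\mathcal{R}}=[(I^n,\partial I^n),(X,\{x_0\})]^{\mathcal{R}}$, where $I^n$ is closed and bounded, $\partial I^n$ is closed in $I^n$, and—since $(X,x_0)$ is defined without parameters—all of $I^n$, $\partial I^n$, $X$ and $\{x_0\}$ are semialgebraic sets defined without parameters. Likewise the classical group $\pi_n(X(\mathbb{R}),x_0)$ is the classical homotopy set $[(I^n(\mathbb{R}),\partial I^n(\mathbb{R})),(X(\mathbb{R}),\{x_0\})]$. These are precisely homotopy sets of systems of semialgebraic sets satisfying the hypotheses of Corollary \ref{cor:princi} (here $C=\emptyset$, so there is no relative map $h$, and the only closedness requirement, on $\partial I^n$, is met). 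Hence, by the Remark following Corollary \ref{cor:ominreal} (the systems version of the statement), the induced map
$$\rho:\pi_n(X(\mathbb{R}),x_0)\longrightarrow \pi_n(X(R),x_0)^{\mathcal{R}}$$
is a bijection.

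Next I would show that $\rho$ is a group homomorphism. By its very construction in Corollary \ref{cor:ominreal}, $\rho$ factors as
$$\pi_n(X(\mathbb{R}),x_0)\xrightarrow{\ \cong\ }\pi_n(X(R),x_0)^{\mathcal{R}_0}\xrightarrow{\ \cong\ }\pi_n(X(R),x_0)^{\mathcal{R}},$$
where the first arrow is the composite of the two base-change bijections of \cite{85DK} (Theorems III.5.1 and III.4.1, passing through the real algebraic numbers $\overline{\mathbb{Q}}$) and the second is the map of Theorem \ref{teo:princi}. The second arrow is already a natural group isomorphism by Theorem \ref{rhoesismorfis}. For the first arrow I would use that the group operation on $\pi_n$ is given, in every setting, by the same parameter-free concatenation formula, and that the semialgebraic base-change isomorphisms of \cite{85DK} respect this operation; thus the first arrow is a group isomorphism as well, and $\rho$ is an isomorphism of groups for every $n\geq 1$.

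Finally, naturality is proved exactly as in Theorem \ref{rhoesismorfis}: for a base-point-preserving semialgebraic map $\psi:(X,x_0)\to(Y,y_0)$ defined without parameters, the induced homomorphisms $\psi_*$ commute with $\rho$ because, on representatives, $\rho$ is the identity reinterpretation $[f]\mapsto[f]$, so the relevant square commutes. I expect the only genuinely delicate point to be the verification that $\rho$ is compatible with the group operation: Corollary \ref{cor:ominreal} by itself yields merely a bijection of sets, and it is the factorization above—where Theorem \ref{rhoesismorfis} handles the passage $\mathcal{R}_0\to\mathcal{R}$ and the comparison theory of \cite{85DK} handles $\mathbb{R}\to\mathcal{R}_0$—that secures the homomorphism property. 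Once this is in place, the result follows for every $n\geq 1$, recovering the case $n=1$ of \cite{02BeO}.
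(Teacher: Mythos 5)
Your proposal is correct and takes essentially the same route as the paper, which gives no separate proof of this corollary but simply asserts that it follows from Corollary \ref{cor:ominreal} (in its systems version), with Theorem \ref{rhoesismorfis} and the Delfs--Knebusch comparison theorems supplying the compatibility with the group structure and naturality. Your write-up just makes explicit the factorization through $\pi_{n}(X,x_{0})^{\mathcal{R}_{0}}$ that the paper leaves implicit.
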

\begin{cor}\label{cinvextexp}The o-minimal homotopy groups are invariants under elementary extensions and o-minimal expansions.
\end{cor}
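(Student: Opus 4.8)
The plan is to reduce both forms of invariance to the semialgebraic case, which is already settled by Theorem \ref{rhoesismorfis} together with the base-change results of Delfs and Knebusch. The first move is triangulation: given a definable pointed set $(X,x_{0})$, I would fix a triangulation $(K,\phi)$ of $X$ partitioning $\{x_{0}\}$, so that $x_{0}=\phi(v_{0})$ for a vertex $v_{0}$ of $K$ and the induced map $\phi_{*}\colon\pi_{n}(|K|,v_{0})\to\pi_{n}(X,x_{0})$ is an isomorphism by functoriality. Since $|K|$ is a polyhedron, hence semialgebraic, this replaces the arbitrary definable $X$ by a semialgebraic set, while $I^{n}$ and $\partial I^{n}$ are semialgebraic to begin with; thus $\pi_{n}(|K|,v_{0})$ is a relative semialgebraic homotopy set to which Theorem \ref{rhoesismorfis} (in its relative form, noted in the remark following it) applies directly. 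It suffices to prove the invariance for such a polyhedron.

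For an o-minimal expansion $\mathcal{R}'$ of $\mathcal{R}$ over the same real closed field $R$, the homeomorphism $\phi$, being definable in $\mathcal{R}$, is also definable in $\mathcal{R}'$, so the two triangulation isomorphisms are compatible with the inclusion-induced map $[f]\mapsto[f]$. Both $\mathcal{R}$ and $\mathcal{R}'$ are o-minimal expansions of the field $\mathcal{R}_{0}$, so applying Theorem \ref{rhoesismorfis} to each yields isomorphisms $\rho\colon\pi_{n}(|K|,v_{0})^{\mathcal{R}_{0}}\to\pi_{n}(|K|,v_{0})^{\mathcal{R}}$ and $\rho\colon\pi_{n}(|K|,v_{0})^{\mathcal{R}_{0}}\to\pi_{n}(|K|,v_{0})^{\mathcal{R}'}$, both given by inclusion of representatives. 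These fit into a commutative triangle whose remaining side is the inclusion $\pi_{n}(|K|,v_{0})^{\mathcal{R}}\to\pi_{n}(|K|,v_{0})^{\mathcal{R}'}$, which is therefore an isomorphism; transporting along $\phi_{*}$ gives the result for $X$.

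For an elementary extension $\mathcal{R}\prec\mathcal{R}'$, I would first note that, since $\phi$ is defined by a formula over parameters in $R$ and ``being a homeomorphism onto $X$'' is first-order, the same formula defines in $\mathcal{R}'$ a homeomorphism $\phi'\colon|K|(R')\to X(R')$; hence $\pi_{n}(X(R'),x_{0})^{\mathcal{R}'}\cong\pi_{n}(|K|(R'),v_{0})^{\mathcal{R}'}$. Applying Theorem \ref{rhoesismorfis} on both sides reduces the problem to comparing the purely semialgebraic groups $\pi_{n}(|K|(R),v_{0})^{\mathcal{R}_{0}}$ and $\pi_{n}(|K|(R'),v_{0})^{\mathcal{R}'_{0}}$. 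Since $\mathcal{R}\prec\mathcal{R}'$ restricts to an elementary extension $R\prec R'$ of real closed fields, these two are canonically isomorphic by the invariance of semialgebraic homotopy under real closed base field extension (Theorem III.4.1 in \cite{85DK}, already invoked in Corollary \ref{cor:ominreal}). Composing the three isomorphisms yields the claim.

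The bulk of the work is packaged in the cited results, so no genuinely hard computation remains. The only points demanding care are the transfer of the triangulation across the elementary extension, where I rely on elementarity to move $\phi$ to $\phi'$, and the verification that each isomorphism in sight is either the inclusion $[f]\mapsto[f]$ or commutes with it, so that the composites are group isomorphisms and not merely bijections of sets. The group structure is preserved throughout because both Theorem \ref{rhoesismorfis} and the Delfs--Knebusch base-change map are isomorphisms of homotopy groups and $\phi_{*}$ is a homomorphism; the case $n=0$ follows identically, with ``group'' replaced by ``pointed set''.
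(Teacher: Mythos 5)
Your proposal is correct and follows essentially the same route as the paper, whose proof is the one-line reduction via the Triangulation Theorem and Theorem \ref{rhoesismorfis}; you simply make explicit the two commutative triangles (through $\pi_{n}(|K|,v_{0})^{\mathcal{R}_{0}}$ for expansions, and through the Delfs--Knebusch base-change isomorphism of Theorem III.4.1 in \cite{85DK} for elementary extensions) that the paper leaves implicit. No gaps.
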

\begin{proof}It follows from the Triangulation Theorem and Theorem \ref{rhoesismorfis}. 
\end{proof}

\begin{obs}These last results remain true in the relative case and their proofs are similar. Moreover, the analogue of Corollary \ref{cinvextexp} is true for homotopy sets of definable systems satisfying the hypotheses of Corollary \ref{cor:princi}. 
\end{obs}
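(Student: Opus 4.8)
The plan is to treat the three assertions in turn, reducing each to machinery already in place. The key observation is that a relative homotopy group is a homotopy set of a system: by definition $\pi_{n}(X,A,x_{0})^{\mathcal{R}}=[(I^{n},I^{n-1},J^{n-1}),(X,A,x_{0})]^{\mathcal{R}}$ has empty relative part ($C=\emptyset$), and its domain subsets $I^{n-1},J^{n-1}$ are closed in $I^{n}$; hence for a semialgebraic pair this system satisfies the hypotheses of Corollary \ref{cor:princi} with $A_{1}=I^{n-1}$, $A_{2}=J^{n-1}$, $B_{1}=A$, $B_{2}=\{x_{0}\}$. Consequently the relative analogue of Theorem \ref{rhoesismorfis} is immediate from Corollary \ref{cor:princi} (it is a bijection, a homomorphism for $n\geq 2$ since the sum of maps is defined exactly as in the absolute case, and natural by the same diagram), the relative analogue of Corollary \ref{cor:homtopiadefigualtopo} follows from the extension of Corollary \ref{cor:ominreal} to systems stated in the Remark following it, and the relative analogue of Corollary \ref{cinvextexp} follows, as in its absolute proof, by first triangulating $(X,A,x_{0})$ in $\mathcal{R}$ so that $X$ becomes a polyhedron, $A$ a subcomplex and $x_{0}$ a vertex: since $C=\emptyset$ the resulting data are semialgebraic over the ground field, so for an o-minimal expansion one factors through the common field reduct $\mathcal{R}_{0}$ using the relative analogue of Theorem \ref{rhoesismorfis}, and for an elementary extension one combines it with the semialgebraic base change.

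The substance of the Remark is the last assertion, invariance of $[(X,A_{1},\ldots,A_{k}),(Y,B_{1},\ldots,B_{k})]_{h}^{\mathcal{R}}$ for definable systems with a genuinely definable relative map $h$. Here the reduction-to-semialgebraic route breaks down: after triangulating $X$ and $Y$ in $\mathcal{R}$ the subcomplexes are semialgebraic, but $h$ cannot in general be made simplicial (hence semialgebraic) by any triangulation, so one can neither factor through $\mathcal{R}_{0}$ nor invoke Corollary \ref{cor:princi} verbatim. For an o-minimal expansion $\mathcal{R}\subseteq\mathcal{R}^{*}$ I would instead re-run the proof of Theorem \ref{teo:princi}/Proposition \ref{prop:reduc4} with $\mathcal{R}$ as the base structure and $\mathcal{R}^{*}$ as the expansion. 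All of its geometric inputs --- the o-minimal homotopy extension Lemma \ref{prop:ext.homotopia}, the Normal Triangulation Theorem and its extension (Fact \ref{fact:normal} and Lemma \ref{prop:extnormal}), the simplicial approximation of Remark \ref{rmk:simplicialapprox}, and the definable separating function (Theorem 1.6 in \cite{84DK}) --- are available in every o-minimal structure, and the polyhedra and subcomplexes are semialgebraic over the shared real closed field. In the resulting argument the normal triangulation of the preimages $f^{-1}(\sigma)$ is performed in $\mathcal{R}^{*}$, while the simplicial approximation $\mu$ and the separating function $\lambda$ may be taken semialgebraic and the only genuinely $\mathcal{R}$-definable ingredient is the base map $\widetilde{h}$ extending $h$; the map finally produced, $(1-\lambda)\widetilde{h}+\lambda\mu$, is therefore $\mathcal{R}$-definable. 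This yields, for every $\mathcal{R}^{*}$-definable map, an $\mathcal{R}^{*}$-homotopic $\mathcal{R}$-definable one, i.e.\ surjectivity of $\rho$; bijectivity follows from surjectivity as in the proof of Theorem \ref{teo:princi}, again via Lemma \ref{prop:ext.homotopia}. Carrying out this re-running faithfully is where I expect the real work to lie, and it is the main obstacle.

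For an elementary extension $\mathcal{R}\preceq\mathcal{R}'$ the definable $h$ causes no difficulty, and I would argue model-theoretically. Any $R'$-definable map $f'$ (resp.\ homotopy $H'$) is a member $f_{a'}$ (resp.\ $H_{c'}$) of an $R$-definable family obtained by promoting its parameters to variables, and the conditions ``$f_{b}$ is a system map with $f_{b}|_{C}=h$'' and ``$H_{c}$ is such a homotopy rel $h$'' are $R$-definable. By o-minimality the definably connected components of these parameter sets are $R$-definable, and by $\mathcal{R}\preceq\mathcal{R}'$ any component meeting $R'$ already meets $R$; moving along a definable path inside the component containing $a'$ gives surjectivity of $\rho$, and an $R$-point of the set of valid homotopy parameters $c$ gives injectivity. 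The same family argument in fact reproves the elementary-extension part of all the absolute and relative statements above, so the only genuinely new ingredient demanded by the Remark is the o-minimal-expansion comparison of the previous paragraph.
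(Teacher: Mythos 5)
Your first paragraph is correct and is exactly the intended argument: a relative homotopy group is the homotopy set of a system with $C=\emptyset$ and with $I^{n-1},J^{n-1}$ closed in $I^n$, so Corollary \ref{cor:princi}, the remark following Corollary \ref{cor:ominreal}, and the triangulate-then-transfer proof of Corollary \ref{cinvextexp} carry over verbatim. The trouble starts with your diagnosis that for a genuinely definable base map $h$ the reduction-to-semialgebraic route ``breaks down''. It does not, and the idea you are missing is that the set $[(X,A_1,\ldots,A_k),(Y,B_1,\ldots,B_k)]_h$ depends on $h$ only through its definable homotopy class, naturally with respect to $\rho$. Concretely: after triangulating in $\mathcal{R}$ (so all the spaces are polyhedra), apply Corollary \ref{cor:princi} with empty relative datum to $h:(C,C\cap A_1,\ldots,C\cap A_k)\rightarrow (Y,B_1,\ldots,B_k)$ itself, obtaining a semialgebraic system map $h_1$ and an $\mathcal{R}$-definable system homotopy $G$ from $h$ to $h_1$; the o-minimal homotopy extension lemma (Lemma \ref{prop:ext.homotopia}, in the systems form already invoked in the proof of Corollary \ref{cor:princi}) then yields bijections $[\,\cdot\,]_h^{\mathcal{S}}\rightarrow [\,\cdot\,]_{h_1}^{\mathcal{S}}$ for $\mathcal{S}=\mathcal{R},\mathcal{R}^{*},\mathcal{R}'$ (using $G$ or its realization), all commuting with $\rho$. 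Once $h$ is replaced by the semialgebraic $h_1$, Corollary \ref{cor:princi} applied in $\mathcal{R}$ and in $\mathcal{R}^{*}$ (which share the field reduct $\mathcal{R}_0$) gives invariance under expansions, and Corollary \ref{cor:princi} applied in $\mathcal{R}$ and in $\mathcal{R}'$ together with the Delfs--Knebusch invariance of semialgebraic homotopy sets under real closed field extension (Chapter III in \cite{85DK}) gives invariance under elementary extensions. So no re-run of Theorem \ref{teo:princi}/Proposition \ref{prop:reduc4} over the base $\mathcal{R}$ is needed; and note that you do not actually carry out that re-run --- you yourself flag it as ``the main obstacle'' --- so even on your own terms the expansion case is left unproved.

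The model-theoretic argument you substitute for the elementary-extension case has a genuine error. A definable path $\gamma$ in the parameter set of a definable family $\{f_b\}$ of continuous maps does not produce a definable homotopy $(x,t)\mapsto f_{\gamma(t)}(x)$: continuity of each $f_b$ is a fiberwise condition, and o-minimality does not upgrade it to joint continuity in $(x,b)$. For instance, $f_b(x)=\max(0,1-|x-b|/|b|)$ for $b\neq 0$ and $f_0\equiv 0$ is a semialgebraic family of continuous maps on $[-1,1]$, with definably connected parameter set $[-1,1]$, whose associated map $(x,b)\mapsto f_b(x)$ is discontinuous at $(0,0)$; a path through $b=0$ gives no homotopy. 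Hence ``moving along a definable path inside the component containing $a'$'' does not establish surjectivity of $\rho$. (Your injectivity step is fine: for fixed $b_1,b_2$ over $R$, the existence of a parameter $c$ defining a homotopy rel $h$ between $f_{b_1}$ and $f_{b_2}$ is first-order and transfers.) Since you also claim this family argument ``reproves'' the elementary-extension parts of all the earlier statements, the gap propagates there as well; the paper's route avoids it precisely by pushing everything into the semialgebraic category and quoting \cite{85DK}, rather than arguing with definable families of maps.
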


\begin{proper}\label{propert}The following properties of the o-minimal homotopy groups can be proved just adapting the proofs of their classical analogues and therefore we have not included them here. However, we give precise references of those proofs in the classical literature that can be adapted in an easy way (and, in particular, those that avoid the use of the path spaces with the compact-open topology).\\

\vspace*{-0.3cm}
\hspace{-0.6cm}1) \textit{The homotopy property:} It is immediate that given two definably homotopic maps $\psi,\phi:(X,A,x_{0})\rightarrow
(Y,B,y_{0})$, their induced homomorphisms $\psi_{*},\phi_{*}:\pi_{n}(X,A,x_{0})^{\mathcal{R}} \rightarrow
\pi_{n}(Y,B,y_{0})^{\mathcal{R}}$ are equal for every $n\geq 1$. Note that for $A=\{x_0\}$ and $B=\{y_0\}$ we have the absolute case.\\

\vspace{-0.3cm}
\hspace{-0.6cm}2) \textit{The exactness property:} Let $(X,A,x_0)$ be a pointed pair. For every $n\geq 2$ we define the \textbf{boundary operator} 
$$\begin{array}{rcl}
\partial: \pi_n(X,A,x_0)^{\mathcal{R}} & \rightarrow &
 \pi_{n-1}(A,x_0)^{\mathcal{R}}\\

[f] & \mapsto & [f|_{I^{n-1}}]
\end{array}$$
For $n=1$, we define $\partial([u])$, $[u]\in \pi_1(X,A,x_0)^{\mathcal{R}}$, as the definably connected component of $A$ which contains $u(0)$. It is easy to prove that the boundary operator is a natural well-defined homomorphism for $n>1$. Moreover, if we denote by $i:(A,x_0)\rightarrow (X,x_0)$ and $j:(X,x_0,x_0)\rightarrow (X,A,x_0)$ the inclusion maps, then the following sequence is exact
\begin{small}$$ \cdots \rightarrow \pi_n(A,x_0)\stackrel{i_*}{\rightarrow} \pi_n(X,x_0)\stackrel{j_*}{\rightarrow} \pi_n(X,A,x_0)\stackrel{\partial}{\rightarrow} \pi_{n-1}(A,x_0) \rightarrow \cdots \rightarrow\pi_0(A,x_0),$$
\end{small}
\hspace{-0.2cm}where the superscript $\mathcal{R}$ has been omitted. Indeed, by the triangulation theorem we can assume that $(X,A,x_0)$ is the realization of a simplicial complex with vertices in the real algebraic numbers. Then the exactness property follows from Corollary \ref{cor:homtopiadefigualtopo}, the obvious fact that $\partial$ commutes with the isomorphism defined there and the classical exactness property.\\

\vspace{-0.3cm}
\hspace{-0.6cm}3) \textit{The action of $\pi_1$ on $\pi_n$:} We can also define the usual action of $\pi_1(-)^{\mathcal{R}}$ on $\pi_n(-)^{\mathcal{R}}$. That is, given a pointed set $(X,x_0)$ and $[u]\in \pi_1(X,x_0)^{\mathcal{R}}$ there is a well-defined isomorphism $\beta_{[u]}:\pi_{n}(X,x_{0})^{\mathcal{R}} \rightarrow  \pi_{n}(X,x_{0})^{\mathcal{R}}$ which only depends on $[u]$. In a similar way, given a pointed pair $(X,A,x_0)$ and $[u]\in \pi_1(A,x_0)^{\mathcal{R}}$ there is a well-defined isomorphism $\beta_{[u]}:\pi_{n}(X,A,x_{0})^{\mathcal{R}} \rightarrow  \pi_{n}(X,A,x_{0})^{\mathcal{R}}$ which only depends on $[u]$. The existence of both actions can be proved just adapting what is done in pp. 341 and pp. 345 in \cite{02H} to the o-minimal setting. We briefly recall the construction of this action in the absolute case: given $[f]\in \pi_n(X,x_0)^{\mathcal{R}}$, we define $\beta_{[u]}([f]):=[H(t,1)]$, where $H:I^n \times I\rightarrow X$ is a definable homotopy such that $H(t,0)=f(t)$ for all $t\in I^{n}$ and $H(t,s)=u(s)$ for all $t\in \partial I^{n}$ and $s\in [0,1]$ (note that Lemma \ref{prop:ext.homotopia} ensures the existence of this homotopy).

We will need the following technical lemma in the proof of the o-minimal Hurewicz theorem (in Section \ref{shurewicz}). We have included here its easy proof because of its cumbersome notation.
\begin{lema}\label{lema:empujacurvas}Let $\psi:(X,x_{0}) \rightarrow (Y,y_{0})$ be a definable map between definable pointed sets and let $[u]\in \pi_{1}(X,x_{0})^{\mathcal{R}}$. Then for all $[f]\in \pi_{n}(X,x_0)^{\mathcal{R}}$, $\psi_{*}(\beta_{[u]}([f]))=\beta_{\psi_{*}([u])}(\psi_{*}([f]))$.
\end{lema}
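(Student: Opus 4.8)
The plan is to exploit the fact that a single definable homotopy can do double duty: the homotopy $H$ used to compute $\beta_{[u]}([f])$ becomes, after postcomposition with $\psi$, a homotopy of exactly the shape required to compute $\beta_{\psi_{*}([u])}(\psi_{*}([f]))$. So the whole argument reduces to composing with $\psi$ and reading off the boundary data, with no genuine computation.

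Concretely, I would first fix representatives $u$ and $f$ of $[u]$ and $[f]$ and choose a definable homotopy $H:I^{n}\times I\rightarrow X$ with $H(t,0)=f(t)$ for all $t\in I^{n}$ and $H(t,s)=u(s)$ for all $t\in \partial I^{n}$, $s\in I$; such an $H$ exists by the o-minimal homotopy extension lemma (Lemma \ref{prop:ext.homotopia}), exactly as recalled in the construction of the action in Properties \ref{propert}. By definition $\beta_{[u]}([f])=[H(\cdot,1)]$, and hence $\psi_{*}(\beta_{[u]}([f]))=[\psi\circ H(\cdot,1)]$.

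Next I would set $G:=\psi\circ H:I^{n}\times I\rightarrow Y$, which is definable and continuous as a composition of such maps. I then check the two boundary conditions. On the one hand $G(t,0)=\psi(f(t))$, so $G(\cdot,0)=\psi\circ f$ is a representative of $\psi_{*}([f])$. On the other hand, for $t\in \partial I^{n}$ one has $G(t,s)=\psi(u(s))$, so along $\partial I^{n}$ the homotopy $G$ traces the loop $\psi\circ u$, which represents $\psi_{*}([u])$. Thus $G$ is precisely a homotopy of the type used to define the action of $\psi_{*}([u])$ on $\psi_{*}([f])$.

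Finally, since the action $\beta_{(-)}$ depends only on the homotopy class of the loop and not on the particular representing homotopy (this well-definedness is part of Properties \ref{propert}(3)), I may evaluate it on $G$ to obtain $\beta_{\psi_{*}([u])}(\psi_{*}([f]))=[G(\cdot,1)]=[\psi\circ H(\cdot,1)]=\psi_{*}(\beta_{[u]}([f]))$, which is the desired equality. There is no serious obstacle here; the only point requiring care is the bookkeeping of the boundary conditions of $G$ together with the appeal to well-definedness of the action, which is exactly what licenses the use of the specific homotopy $G$ in the computation.
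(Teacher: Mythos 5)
Your proposal is correct and is essentially identical to the paper's own proof: both compose the defining homotopy $H$ with $\psi$ and observe that $\psi\circ H$ has exactly the boundary data needed to compute $\beta_{\psi_{*}([u])}(\psi_{*}([f]))$. The only difference is that you spell out the appeal to well-definedness of the action, which the paper leaves implicit.
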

\begin{proof}It is enough to observe that if $H:I^{n}\times I \rightarrow X$ is a definable homotopy such that
$H(t,0)=f(t)$ for all $t\in I^{n}$ and $H(t,s)=u(s)$ for all $t\in \partial I^{n}$ and $s\in I$, then $\psi\circ H:I^{n}\times I \rightarrow Y$ is a definable homotopy such that $\psi \circ H(t,0)=\psi \circ f(t)$ for all $t\in I^{n}$ and $\psi \circ H(t,s)=\psi \circ u(s)$ for all $t\in \partial I^{n}$ and $s\in I$. 
\end{proof}

\hspace{-0.6cm}4) \textit{The fibration property:} We say that a definable map $p:E\rightarrow B$ is a \textbf{definable Serre fibration} if it has the definable homotopy lifting property for every closed and bounded definable set $X$, i.e., if for each definable homotopy $H:X\times I\rightarrow B$ and each definable map $\widetilde{f}:X\rightarrow E$ with $p\circ \widetilde{f}(x)=H(x,0)$ for all $x\in X$, there exists a definable homotopy $\widetilde{H}:X\times I\rightarrow E$ with $p\circ \widetilde{H}=H$ and $\widetilde{H}(x,0)=\widetilde{f}(x)$ for all $x\in X$.
\begin{obs}\label{obs:Serre fibration}The definable homotopy lifting property for a closed simplex $\sigma$ is equivalent to the definable homotopy lifting property for $\sigma$ relative to $\partial \sigma$, i.e., if a definable homotopy $H:\sigma\times I\rightarrow B$ lifts to a definable homotopy $\widetilde{H}:\sigma\times I\rightarrow E$ starting with a given lift $\widetilde{H}_0$ ($\widetilde{f}$ above) of $H_0$, then $H$ also lifts to an $\widetilde{H}$ extending a given definable homotopy $\widetilde{H}:\partial \sigma \times I \rightarrow E$. Indeed, there is a semialgebraic homeomorphism of $\sigma\times I$ onto itself which carries $\sigma\times \{0\}$ homeomorphically onto $(\sigma \times \{0\})\cup (\partial \sigma \times I)$ (see the proof of Theorem III.3.1 in \cite{59Hu}). Therefore, the homotopy lifting property for closed simplices is equivalent to the homotopy lifting property for closed and bounded definable sets $X$ relative to closed subsets $A$ of $X$. For, by the triangulation theorem we can assume that $X$ is the realization of a (closed) simplicial complex and $A$ is the realization of a (closed) subcomplex of $X$. By induction over the skeleta of $X$ it suffices to construct a lifting over the closure of each open simplex contained in $X\setminus A$ at a time (and relative to the lifting constructed previously over its frontier).
\end{obs}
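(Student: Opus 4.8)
The plan is to prove the two asserted equivalences in turn, arranging every map to be semialgebraic (and hence definable over any o-minimal $\mathcal{R}$) so that no feature of the real field is used. For a single closed simplex $\sigma$ the whole argument rests on producing a \emph{semialgebraic} self-homeomorphism $\Phi\colon \sigma\times I\to\sigma\times I$ that carries $\sigma\times\{0\}$ onto $A_{0}:=(\sigma\times\{0\})\cup(\partial\sigma\times I)$. First I would write down such a $\Phi$ explicitly --- the classical radial homeomorphism behind Theorem III.3.1 in \cite{59Hu} is piecewise linear, so it is defined by formulas valid over an arbitrary real closed field --- and record that $\Phi^{-1}$ is semialgebraic as well.

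Granted $\Phi$, I would deduce the relative lifting property from the absolute one by transporting the lifting problem along $\Phi$. Given $H\colon\sigma\times I\to B$ together with a lift $g$ on $A_{0}=\Phi(\sigma\times\{0\})$, set $H':=H\circ\Phi$ and take $g\circ\Phi|_{\sigma\times\{0\}}$ as initial lift of $H'_{0}$; the absolute property yields a lift $\widetilde{H}'$ of $H'$ extending it, and $\widetilde{H}:=\widetilde{H}'\circ\Phi^{-1}$ is then a definable lift of $H$ restricting to $g$ on $A_{0}$, as a direct check of $p\circ\widetilde{H}=H$ and $\widetilde{H}|_{A_{0}}=g$ confirms. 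The reverse implication is symmetric: $\Phi$ sets up a bijection between absolute and relative lifting problems over $\sigma\times I$, so running the same transport with $\Phi^{-1}$ recovers the absolute property from the relative one.

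For the second equivalence one direction is immediate, a closed simplex relative to $\partial\sigma$ being the special case $X=\overline{\sigma}$, $A=\partial\sigma$ of a closed and bounded $X$ relative to a closed $A$ (and the absolute case is $A=\emptyset$). For the substantive direction I would apply the o-minimal triangulation theorem to reduce to $X=|K|$ and $A=|K_{A}|$ with $K$ a closed simplicial complex and $K_{A}$ a subcomplex, and then lift a given $H\colon|K|\times I\to B$ by induction on the skeleta of $K$ relative to $K_{A}$. At each step I would extend the lift already built across each open simplex $\sigma$ of the next dimension lying in $|K|\setminus|K_{A}|$ separately: over such a $\sigma$ one already has a lift on $(\sigma\times\{0\})\cup(\partial\sigma\times I)$, so the relative lifting property for $\sigma$ from the first part produces a lift over $\overline{\sigma}\times I$. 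Distinct simplices of the same dimension meet only along common faces, where the partial lifts agree, so the definable pasting lemma assembles the finitely many closed pieces into a single definable continuous lift.

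The only genuine obstacle I anticipate is the first step: the transfer to the o-minimal setting stands or falls on the homeomorphism $\Phi$ being \emph{semialgebraic}, i.e.\ expressible over an arbitrary real closed field rather than merely over $\R$. Once that explicit description is in hand, everything else is routine bookkeeping, the only o-minimal inputs being the triangulation theorem and the elementary fact that a map which is definable and continuous on each of finitely many closed definable pieces, and agrees on overlaps, is definable and continuous on their union.
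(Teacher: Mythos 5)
Your proposal is correct and follows essentially the same route as the paper: the semialgebraic self-homeomorphism of $\sigma\times I$ carrying $\sigma\times\{0\}$ onto $(\sigma\times\{0\})\cup(\partial\sigma\times I)$ (from the proof of Theorem III.3.1 in \cite{59Hu}, which is indeed given by formulas valid over any real closed field), followed by triangulation and induction over the skeleta with pasting of the lifts over the closed simplices. You merely spell out the transport of the lifting problem along $\Phi$ and $\Phi^{-1}$, which the paper leaves implicit.
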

With the above remark it is easy to adapt  to the o-minimal setting the corresponding classical proof of the following fact (see Theorem 4.41 in \cite{02H}).
\begin{teo}[The fibration property]\label{teo:fibrationproperty} For every definable Serre fibration $p:E\rightarrow B$, the induced map $p_*:\pi_n(E,F,e_0)\rightarrow \pi_n(B,b_0)$ is a bijection for $n=1$ and an isomorphism for all $n\geq 2$, where $e_0\in F=p^{-1}(b_0)$. 
\end{teo}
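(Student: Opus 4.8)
The plan is to transcribe the classical argument (Theorem 4.41 in \cite{02H}) into the definable category, using Remark \ref{obs:Serre fibration} to replace every application of the topological homotopy lifting property by the \emph{definable} homotopy lifting property for closed and bounded sets relative to closed subsets, and checking at each stage that the resulting lifts and homotopies are definable. I would first record that $p_*$ is well defined: if $f:(I^n,I^{n-1},J^{n-1})\rightarrow(E,F,e_0)$ is a representative then $p\circ f(\partial I^n)=\{b_0\}$ because $F=p^{-1}(b_0)$, so $p\circ f$ represents a class of $\pi_n(B,b_0)$, and a relative definable homotopy of representatives maps under $p$ to a definable homotopy rel $\partial I^n$. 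Since $p$ commutes with the concatenation of cubes, $p_*$ is a homomorphism for $n\ge 2$; for $n=1$ the relative set $\pi_1(E,F,e_0)$ carries no group structure, which is exactly why only a bijection is asserted in that case.

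For surjectivity, given $g:(I^n,\partial I^n)\rightarrow (B,b_0)$ I would write $I^n=I^{n-1}\times I$ with last coordinate $t_n$ and view $g$ as a definable homotopy with parameter $1-t_n$. On the top face $\{t_n=1\}$ and on the side faces $\partial I^{n-1}\times I$ the map $g$ is constant $b_0$, so the constant map at $e_0$ is a lift there; applying the definable relative homotopy lifting property of Remark \ref{obs:Serre fibration} with $X=I^{n-1}$ and $A=\partial I^{n-1}$ extends this to a definable $f:I^n\rightarrow E$ with $p\circ f=g$. By construction $f(J^{n-1})=\{e_0\}$ and, since $g(I^{n-1})=\{b_0\}$, also $f(I^{n-1})\subset p^{-1}(b_0)=F$. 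Thus $f$ represents a class of $\pi_n(E,F,e_0)$ with $p_*[f]=[g]$.

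For injectivity, suppose $p\circ f_0\simeq p\circ f_1$ rel $\partial I^n$ via a definable homotopy $G:I^n\times I\rightarrow B$. I want a definable $\widetilde{G}:I^n\times I\rightarrow E$ lifting $G$ with $\widetilde{G}(\cdot,0)=f_0$, $\widetilde{G}(\cdot,1)=f_1$, $\widetilde{G}(J^{n-1}\times I)=\{e_0\}$ and $\widetilde{G}(I^{n-1}\times I)\subset F$, for then $\widetilde{G}$ is a homotopy through relative maps and $[f_0]=[f_1]$. The prescribed data $f_0,f_1,e_0$ define a definable lift of $G$ over the closed set $P=(I^n\times\{0\})\cup(I^n\times\{1\})\cup(J^{n-1}\times I)$, and these agree on overlaps because $f_0(J^{n-1})=f_1(J^{n-1})=\{e_0\}$. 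The point is that $P$ is precisely all faces of the cube $I^n\times I$ except the face $I^{n-1}\times I$; reslicing the cube so that its homotopy direction is taken to be the coordinate $t_n$ (with base $Y=I^{n-1}\times I$, and after flipping $t_n\mapsto 1-t_n$) exhibits $P$ in the form $(Y\times\{0\})\cup(\partial Y\times I)$, so the relative lifting property of Remark \ref{obs:Serre fibration} applies with $A=\partial Y$ and produces the desired $\widetilde{G}$; the condition $\widetilde{G}(I^{n-1}\times I)\subset F$ then holds automatically because $G(I^{n-1}\times I)=\{b_0\}$. Together with surjectivity this shows $p_*$ is bijective for all $n\ge 1$ and, being a homomorphism for $n\ge 2$, an isomorphism there.

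The main obstacle is the injectivity step: one must arrange the cube $I^n\times I$ so that the prescribed boundary data at both ends $f_0,f_1$ together with the basepoint condition on $J^{n-1}\times I$ fit the shape $(Y\times\{0\})\cup(A\times I)$ required by the definable relative lifting property, which is exactly the role of the reslicing and of the homeomorphism of $\sigma\times I$ supplied in Remark \ref{obs:Serre fibration}. The remaining points, namely the definability of all the lifts and the compatibility of $p_*$ with the sum of cubes, are routine.
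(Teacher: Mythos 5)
Your proposal is correct and follows exactly the route the paper intends: the paper's ``proof'' consists of the single sentence that, with Remark \ref{obs:Serre fibration} in hand, one adapts the classical argument of Theorem 4.41 in \cite{02H}, and your write-up is precisely that adaptation (lifting over $J^{n-1}$ for surjectivity, and repermuting the coordinates of $I^n\times I$ so that the partial lift sits on $(Y\times\{0\})\cup(\partial Y\times I)$ for injectivity). The details you supply, including the verification that $\widetilde{G}(I^{n-1}\times I)\subset F$ and the remark on the absence of a group structure for $n=1$, are accurate.
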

As a consequence of the fibration property and the following proposition, we can extend Corollary 2.8 in \cite{04EO}, concerning coverings and the fundamental group, to all the homotopy groups (see Corollary \ref{cor:recubgrupos} below). For a definition of definable covering see Section 2 in \cite{04EO}.
\begin{prop}\label{prop:recufibracion}Every definable covering $p:E\rightarrow B$ is a definable Serre fibration. Moreover, every definable covering has the definable homotopy lifting property for definable sets.
\end{prop}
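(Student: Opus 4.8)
```latex
The plan is to prove the two assertions in Proposition \ref{prop:recufibracion} in a single stroke by reducing the homotopy lifting problem for a definable covering to the fundamental unique path-lifting property that underlies every covering map. Recall from \cite{04EO} that a definable covering $p:E\rightarrow B$ is, locally over a finite definable open cover $\{U_i\}$ of $B$, a trivial covering: $p^{-1}(U_i)$ is a disjoint union of definable open sets each mapped homeomorphically onto $U_i$ by $p$. The key local fact I would isolate first is the unique lifting of paths: given a definable path $\gamma:I\rightarrow B$ and a point $e\in p^{-1}(\gamma(0))$, there is a unique definable path $\widetilde{\gamma}:I\rightarrow E$ with $p\circ\widetilde{\gamma}=\gamma$ and $\widetilde{\gamma}(0)=e$. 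This is obtained by partitioning $I$ into finitely many closed subintervals whose images under $\gamma$ lie in single trivializing sets $U_i$ (here o-minimality guarantees finiteness of the partition), lifting over each subinterval using the local homeomorphism, and gluing. Uniqueness is immediate from discreteness of the fibers together with connectedness of $I$.

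Next I would establish the definable homotopy lifting property directly. By Remark \ref{obs:Serre fibration} it suffices to lift homotopies over a closed simplex $\sigma$ relative to $\partial\sigma$, and by induction over the skeleta (again as in that remark) it is enough to treat one simplex at a time. So let $H:\sigma\times I\rightarrow B$ be a definable homotopy with a given lift $\widetilde{H}_0=\widetilde{f}$ of $H_0$. The natural approach is to lift the homotopy fiberwise: for each fixed $x\in\sigma$, the restriction $H(x,-):I\rightarrow B$ is a path starting at $H(x,0)=p\circ\widetilde{f}(x)$, and path-lifting produces a unique $\widetilde{H}(x,-)$ starting at $\widetilde{f}(x)$. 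This defines $\widetilde{H}:\sigma\times I\rightarrow E$ set-theoretically and uniquely, and it satisfies $p\circ\widetilde{H}=H$ and $\widetilde{H}_0=\widetilde{f}$ by construction. Two things then remain: that $\widetilde{H}$ is definable, and that $\widetilde{H}$ is continuous.

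Definability I expect to follow from a uniform version of the path-lifting argument: by the cell decomposition / trivialization theorems available in the o-minimal setting, one can choose a single finite definable partition of $\sigma\times I$ adapted simultaneously to all the trivializing sets, so that the lift is given piecewise by composing $H$ with definable local inverses of $p$. Continuity is then the main obstacle I anticipate. The fiberwise lift is a priori only defined pointwise in $x$, and one must check it varies continuously across the partition boundaries; the standard remedy is to use the local product structure to see that on each piece $\widetilde{H}$ is a composite of continuous definable maps, together with the fact that the lifts agree on overlaps because the fibers are discrete and the base pieces are definably connected, forcing the local sheets to match up. Concretely, I would argue that for each $x$ the set of $t$ at which the pointwise lift coincides with a continuous local lift is clopen in $I$ and nonempty, hence all of $I$, propagating continuity from the initial condition $\widetilde{f}$. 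The second sentence of the proposition, the lifting property for arbitrary (not necessarily closed and bounded) definable sets $X$, then comes for free from the same fiberwise construction, since closedness and boundedness of $X$ were only needed in Remark \ref{obs:Serre fibration} to reduce to simplices via triangulation, whereas the uniqueness of path lifts lets us define $\widetilde{H}$ directly on any $X$ and check continuity locally.
```
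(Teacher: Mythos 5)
Your construction of the lift itself is the same as the paper's: lift each path $H_x$ through $p$ starting at $\widetilde{f}(x)$, note that this determines $\widetilde{H}$ uniquely and set-theoretically, and then establish definability and continuity separately. (The detour through Remark \ref{obs:Serre fibration} and closed simplices is unnecessary, as you yourself concede at the end; the paper works with an arbitrary definable $X$ from the outset, which is precisely what yields the stronger second sentence of the proposition.) Where you genuinely diverge is the continuity step. The paper never argues via local trivializations: it uses the o-minimal criterion that a definable map is continuous if and only if it has the correct limits along definable curves, reduces to curves $u$ ending at $(x_0,s_0)$ which start at $(x_0,0)$ and are definably homotopic to the canonical path $t\mapsto (x_0,t)$, and then gets the endpoint identity $\widetilde{H\circ u}(1)=\widetilde{H}_{x_0}(1)$ from the path homotopy lifting property (Proposition 2.7 in \cite{04EO}) together with uniqueness of path lifts. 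You instead propose the classical local argument: the set $T_x$ of parameters $t$ near which $\widetilde{H}$ coincides with a continuous local lift is nonempty and clopen in $I$. This can be made to work (it avoids the Lebesgue number by trading compactness for connectedness), but it has one o-minimal-specific pitfall you must flag: over a non-archimedean $R$ the interval $I$ is totally disconnected as a topological space, so ``nonempty and clopen implies all of $I$'' is valid only for \emph{definable} subsets of $I$. Your clopen argument therefore presupposes the definability of $\widetilde{H}$, which is exactly the part you left as a vague appeal to uniform path lifting; that definability is not free (the paper outsources it to a modification of the proof of Proposition 2.6 in \cite{04EO}), and in your scheme it also carries the weight of the continuity proof. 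With that point made explicit, your route is a correct and somewhat more self-contained alternative; the paper's route buys continuity essentially for free from results already established in \cite{04EO}.
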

\begin{proof}Let $X$ be a definable set. Let $H:X\times I\rightarrow B$ a definable homotopy and $\widetilde{f}$ a definable map $\widetilde{f}:X\rightarrow E$ with $p\circ \widetilde{f}(x)=H(x,0)$ for all $x\in X$. Consider the definable family of paths $\{H_x:x\in X \}$, where $H_x:I\rightarrow B:t \mapsto H(x,t)$. Since $p$ has the path lifting property (see Proposition 2.6 in \cite{04EO}), for each $x\in X$ there is a (unique) lifting $\widetilde{H}_x:I\rightarrow E$ of $H_x$ such that $\widetilde{H}_x(0)=\widetilde{f}(x)$. Moreover, an easy modification of the proof of Proposition 2.6 in \cite{04EO} shows that the family of paths $\{\widetilde{H}_x:x\in X\}$ is definable. Therefore, the map $\widetilde{H}:X\times I\rightarrow E:(x,t)\mapsto \widetilde{H}_x(t)$ is definable, $p\circ \widetilde{H}=H$ and $\widetilde{H}(x,0)=\widetilde{f}(x)$ for all $x\in X$. It remains to prove that $\widetilde{H}$ is indeed continuous. Fix $(x_0,s_0)\in X\times I$. It is enough to prove that for each definable path $u:I\rightarrow X\times I$ with $u(1)=(x_0,s_0)$ we have that $\widetilde{H}(u(t))\rightarrow \widetilde{H}(x_0,s_0)$ when $t\rightarrow 1$. We will prove it for $s_0=1$, but the same proof works for every $s_0\in I$.\\

\vspace*{-0.3cm}
\hspace{-0.6cm}\textit{Claim:} We can assume that $u(0)=(x_0,0)$, that $u$ is definably homotopic to the canonical path $I\rightarrow X\times I:t\mapsto (x_0,t)$ and that $\widetilde{H}\circ u:[0,1)\rightarrow E$ is continuous.\\

\vspace*{-0.3cm}
\hspace{-0.6cm}Granted the Claim, the path homotopy lifting property of $p$ (see Proposition 2.7 in \cite{04EO}) implies that the respective liftings $\widetilde{H\circ u}$ and $\widetilde{H}_{x_0}$ of $H\circ u$ and $H_{x_0}$ starting at $\widetilde{f}(x_0)$, satisfy $\widetilde{H\circ u}(1)=\widetilde{H}_{x_0}(1)$. On the other hand, by the unicity of liftings of paths of $p$, we have that for every $\epsilon\in [0,1)$, $\widetilde{H}(u(t))=\widetilde{H\circ u}(t)$ for all $t\in [0,\epsilon]$. Therefore,  $\widetilde{H}(u(t))=\widetilde{H\circ u}(t)$ for all $t\in [0,1)$. Hence, $\widetilde{H}(u(t))\rightarrow \widetilde{H\circ u}(1)=\widetilde{H}_{x_0}(1)=\widetilde{H}(x_0,1)$ when $t\rightarrow 1$, as required.\\
\textit{Proof of the Claim:}  Since $X$ is definable, there exist a definably connected neighbourhood $U$ of $x_0$ which is definably contractible. Since $\widetilde{H}\circ u$ is definable, without loss of generality, $\widetilde{H}\circ u$ is continuous in $[\frac{2}{3},1)$ and $u(t)\in U\times I$ for all $t\in [\frac{2}{3},1)$. Denote by $u(\frac{2}{3})=(x_1,s_1)$ and take a definable path $w:[0,\frac{1}{3}]\rightarrow U$ such that $w(0)=x_0$ and $w(\frac{1}{3})=x_1$. We define the path $\hat{u}(t):I\rightarrow X\times I$ such that $\hat{u}(t):=(w(t),0)$ for all $t\in [0,\frac{1}{3}]$, $\hat{u}(t):=(x_1,3s_1(t-\frac{1}{3}))$ for all $t\in [\frac{1}{3},\frac{2}{3}]$ and $\hat{u}(t):=u(t)$ for all $t\in [\frac{2}{3},1]$. The definable path $\widetilde{H}(\hat{u}(t))$ is continuous for all $t\in [0,1)$ because $\widetilde{f}$ is continuous and because of the construction of $\widetilde{H}$. Since $U$ is definably contractible, $\{x_0\}\times I$ is a definable deformation retract of $U\times I$ and therefore $\hat{u}$ is definably homotopic to the canonical path $I\rightarrow X\times I:t\mapsto (x_0,t)$. Finally, since we are just interested in the behaviour of the definable path $u$ when $t$ is near $1$, we can replace $u$ by $\hat{u}$.
\end{proof}
\begin{cor}\label{cor:recubgrupos}Let $p:E\rightarrow B$ be a definable covering and let $p(e_0)=b_0$. Then $p_*:\pi_n(E,e_0)^{\mathcal{R}}\rightarrow \pi_n(B,b_0)^{\mathcal{R}}$ is an isomorphism for every $n>1$ and injective for $n=1$.
\end{cor}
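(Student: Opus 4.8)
The plan is to deduce Corollary \ref{cor:recubgrupos} directly from the fibration property (Theorem \ref{teo:fibrationproperty}) together with the fact, established in Proposition \ref{prop:recufibracion}, that every definable covering $p:E\rightarrow B$ is a definable Serre fibration. First I would set $F=p^{-1}(b_0)$, so that $e_0\in F$, and recall that for a covering the fibre $F$ is a definable set of dimension $0$, i.e. a finite discrete set of points. The key structural input is therefore that all the higher homotopy groups of the fibre vanish and that $\pi_0(F)$ is just the (finite) set of sheets of the covering over $b_0$.

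Next I would invoke Theorem \ref{teo:fibrationproperty}: since $p$ is a definable Serre fibration, the induced map $p_*:\pi_n(E,F,e_0)^{\mathcal{R}}\rightarrow\pi_n(B,b_0)^{\mathcal{R}}$ is an isomorphism for all $n\geq 2$ and a bijection for $n=1$. I would then feed this into the long exact sequence of the pointed pair $(E,F,e_0)$ from the exactness property (Properties \ref{propert}(2)), which reads
$$\cdots\rightarrow\pi_n(F,e_0)\rightarrow\pi_n(E,e_0)\stackrel{j_*}{\rightarrow}\pi_n(E,F,e_0)\stackrel{\partial}{\rightarrow}\pi_{n-1}(F,e_0)\rightarrow\cdots$$
with the superscript $\mathcal{R}$ omitted. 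Because $F$ is finite and discrete, $\pi_n(F,e_0)^{\mathcal{R}}=0$ for every $n\geq 1$; this is where I would use Corollary \ref{cinvextexp} (invariance under elementary extensions and o-minimal expansions) or, more directly, Corollary \ref{cor:homtopiadefigualtopo} to transfer the classical vanishing of the homotopy groups of a finite discrete space to the o-minimal setting. With these terms killed, the exact sequence forces $j_*:\pi_n(E,e_0)^{\mathcal{R}}\rightarrow\pi_n(E,F,e_0)^{\mathcal{R}}$ to be an isomorphism for every $n\geq 2$ and an injection for $n=1$.

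Finally I would compose: for $n\geq 2$ the map $p_*:\pi_n(E,e_0)^{\mathcal{R}}\rightarrow\pi_n(B,b_0)^{\mathcal{R}}$ factors (up to the canonical identifications) as the composite $\pi_n(E,e_0)\stackrel{j_*}{\rightarrow}\pi_n(E,F,e_0)\stackrel{p_*}{\rightarrow}\pi_n(B,b_0)$ of two isomorphisms, hence is itself an isomorphism; for $n=1$ the composite of an injection with a bijection is an injection. I expect the main obstacle to be purely bookkeeping rather than conceptual: one must check that the relative map $p_*$ of Theorem \ref{teo:fibrationproperty} and the absolute $p_*$ of the statement are compatible through $j_*$, i.e. that the relevant square commutes, which follows from functoriality of the induced maps and the fact that $p$ carries $(E,e_0,e_0)$ to $(B,b_0,b_0)$ while collapsing $F$ to $b_0$. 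Verifying $\pi_n(F)^{\mathcal{R}}=0$ rigorously in the o-minimal category (so that one is entitled to apply the classical transfer) is the only point requiring care, and Corollary \ref{cor:homtopiadefigualtopo} supplies exactly what is needed.
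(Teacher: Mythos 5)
Your proposal is correct and follows essentially the same route as the paper: the paper's proof likewise combines Proposition \ref{prop:recufibracion}, the fibration property, the exact sequence of the pair $(E,p^{-1}(b_0),e_0)$, and the vanishing of $\pi_n(p^{-1}(b_0),e_0)^{\mathcal{R}}$ for $n\geq 1$ because the fibre is finite. The only cosmetic difference is that the vanishing of the homotopy groups of the finite fibre is immediate (any definable map from $I^n$ has definably connected image, hence is constant), so no appeal to Corollary \ref{cor:homtopiadefigualtopo} is needed there.
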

\begin{proof}Since $p^{-1}(b_0)$ is finite, we have that $\pi_n(p^{-1}(b_0),e_0)=0$ for every $n\geq 1$. Then the result follows from Proposition \ref{prop:recufibracion} and both the exactness and the fibration properties.
\end{proof}
\end{proper}

\section{The o-minimal Hurewicz theorems and the o-minimal Whitehead theorem}\label{shurewicz}

Next we will prove both the absolute and relative Hurewicz theorems in the o-minimal setting by transferring from the semialgrebraic setting via Theorem \ref{teo:princi}.

First let us define the o-minimal Hurewicz homomorphism. Recall that, as it was proved in \cite{96W}, it is possible to develop an o-minimal singular homology theory $H_{*}(-)^{\mathcal{R}}$ on the category of definable sets. Moreover, by Proposition 3.2 in \cite{02BeO} there exists a natural isomorphism $\theta$  between the functors $H_{*}(-)^{\mathcal{R}_{0}}$ and
$H_{*}(-)^{\mathcal{R}}$ on the category of (pairs of) semialgebraic sets (note that the notation used in the above paper is different from ours, where $H_{*}(-)^{\mathcal{R}_{0}}=H_*^{sa}(-)$ and
$H_{*}(-)^{\mathcal{R}}=H_*^{def}(-)$ ). Fix $n\geq 1$. By Proposition 3.2 in \cite{02BeO}, $H_n(I^n,\partial I^n)^{\mathcal{R}_0}\cong H_n(I^n(\mathbb{R}),\partial I^n(\mathbb{R}))\cong \mathbb{Z}$. We fix a generator $z_n^{\mathcal{R}_0}$ of $H_n(I^n,\partial I^n)^{\mathcal{R}_0}$ and we define $z_n^{\mathcal{R}}:=\theta(z_n^{\mathcal{R}_0})$. Now, given a definable pointed set $(X,x_0)$, the \textbf{o-minimal Hurewicz homomorphism}, for $n\geq 1$, is the map $h_{n,\mathcal{R}}:\pi_{n}(X,x_{0})^{\mathcal{R}} \rightarrow H_{n}(X)^{\mathcal{R}}:[f] \mapsto h_{n,\mathcal{R}}( [f] )=f_{*}(z_{n}^{\mathcal{R}})$, where $f_{*}:H_{n}(I^{n},\partial I)^{\mathcal{R}}\rightarrow
H_{n}(X)^{\mathcal{R}}$ denotes the map in o-minimal singular homology
induced by $f$. We define the relative Hurewicz homomorphism adapting in the obvious way what was done in the absolute case. Now, following the classical proof, it is easy to check that $h_{n,\mathcal{R}}$ is a natural transformation between the functors $\pi_{n}(-)^{\mathcal{R}}$ and
$H_{n}(-)^{\mathcal{R}}$ (see Proposition V.4.1 in \cite{59Hu}). This fact can also be deduced from the semialgebraic setting (see Remark \ref{rhurewicz}).

The following result give us a relation between the semialgebraic and the o-minimal Hurewicz homomorphisms.
\begin{prop}\label{lema:homohurdefsa}Let $(X,x_{0})$ be a semialgebraic pointed set. Then the following diagram commutes
\begin{displaymath}
\xymatrix{ \pi_{n}(X,x_{0})^{\mathcal{R}_{0}}
\ar[r]^{h_{n,\mathcal{R}_{0}}} \ar[d]_{\rho} &
     H_{n}(X)^{\mathcal{R}_{0}} \ar[d]^{\theta}\\
\pi_{n}(X,x_{0})^{\mathcal{R}} \ar[r]_{h_{n,\mathcal{R}}} &
H_{n}(X)^{\mathcal{R}}}
\end{displaymath}
for all $n\geq 1$.
\end{prop}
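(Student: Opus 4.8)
The plan is to evaluate both composites in the square on an arbitrary class and to reduce the required equality to the naturality of $\theta$. Fix $n\ge 1$ and let $[f]\in\pi_n(X,x_0)^{\mathcal{R}_0}$ be represented by a semialgebraic map $f:(I^n,\partial I^n)\to(X,x_0)$. Travelling down and then right, $\rho([f])=[f]$, so the bottom--left composite is $h_{n,\mathcal{R}}(\rho([f]))=f_*^{\mathcal{R}}(z_n^{\mathcal{R}})$, where $f_*^{\mathcal{R}}:H_n(I^n,\partial I^n)^{\mathcal{R}}\to H_n(X)^{\mathcal{R}}$ is the map induced by $f$ in o-minimal singular homology. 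Travelling right and then down, $h_{n,\mathcal{R}_0}([f])=f_*^{\mathcal{R}_0}(z_n^{\mathcal{R}_0})$, so the top--right composite is $\theta\bigl(f_*^{\mathcal{R}_0}(z_n^{\mathcal{R}_0})\bigr)$. Thus it suffices to prove the single identity $\theta\bigl(f_*^{\mathcal{R}_0}(z_n^{\mathcal{R}_0})\bigr)=f_*^{\mathcal{R}}(z_n^{\mathcal{R}})$.

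Here I would invoke the fact, recalled just before the statement, that $\theta$ is a natural isomorphism between the functors $H_*(-)^{\mathcal{R}_0}$ and $H_*(-)^{\mathcal{R}}$ on the category of pairs of semialgebraic sets (Proposition 3.2 in \cite{02BeO}). Since $(X,x_0)$ is a semialgebraic pointed set and $f$ is a semialgebraic map, $f$ is a morphism of this category, and naturality gives the commuting square $\theta_X\circ f_*^{\mathcal{R}_0}=f_*^{\mathcal{R}}\circ\theta_{(I^n,\partial I^n)}$. Evaluating both sides at $z_n^{\mathcal{R}_0}$ and using the very definition $z_n^{\mathcal{R}}:=\theta(z_n^{\mathcal{R}_0})$ yields $\theta\bigl(f_*^{\mathcal{R}_0}(z_n^{\mathcal{R}_0})\bigr)=f_*^{\mathcal{R}}(\theta_{(I^n,\partial I^n)}(z_n^{\mathcal{R}_0}))=f_*^{\mathcal{R}}(z_n^{\mathcal{R}})$, which is exactly the desired identity. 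Since $\rho$, $h_{n,\mathcal{R}_0}$, $h_{n,\mathcal{R}}$ and $\theta$ are all well defined on classes, the computation does not depend on the chosen representative $f$.

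The only point requiring care is the identification implicit in the definition of the Hurewicz homomorphism: $f_*$ really denotes the composite of the map $H_n(I^n,\partial I^n)\to H_n(X,x_0)$ induced by $f$ as a map of pairs with the canonical isomorphism $H_n(X,x_0)\cong H_n(X)$, valid for $n\ge 1$. I would remark that this latter isomorphism is itself induced by the inclusion of semialgebraic pairs $(X,\emptyset)\hookrightarrow(X,x_0)$, so $\theta$ is compatible with it as well; hence naturality of $\theta$ applies to the full composite defining the Hurewicz map, not merely to the map of pairs. Granted this, the argument above goes through verbatim and the square commutes. I do not anticipate any genuine obstacle: once the definitions are unwound, the statement is precisely an instance of the naturality of $\theta$ combined with the normalization $z_n^{\mathcal{R}}=\theta(z_n^{\mathcal{R}_0})$.
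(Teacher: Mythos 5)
Your proof is correct and follows essentially the same route as the paper's: evaluate both composites on a class $[f]$, apply the naturality of $\theta$ to the semialgebraic map $f$, and use the normalization $z_n^{\mathcal{R}}=\theta(z_n^{\mathcal{R}_0})$. Your extra remark on the identification $H_n(X,x_0)\cong H_n(X)$ is a minor refinement the paper leaves implicit.
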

\begin{proof}Let $[f]\in \pi_{n}(X,x_{0})^{\mathcal{R}_{0}}$. By definition $z_n^{\mathcal{R}}=\theta(z_n^{\mathcal{R}_0})$ and by the naturality of $\theta$ we have that $\theta(f_*(z_n^{\mathcal{R}_{0}}))=f_*(\theta(z_n^{\mathcal{R}_0}))$. Therefore $\theta(h_{n,\mathcal{R}_{0}}([f]))=\theta(f_*(z_n^{\mathcal{R}_{0}}))=f_*(\theta(z_n^{\mathcal{R}_0}))=f_*(z_n^{\mathcal{R}})=h_{n,\mathcal{R}}(\rho([f]))$.
\end{proof}
\begin{obs}\label{rhurewicz}(1) This last result remains true in the relative case and its proof is similar.\\
(2) Since $h_{n,\mathcal{R}_0}$ is a homomorphism for $n\geq 1$ (see Theorem III.7.3 in \cite{85DK}), it follows from Proposition \ref{lema:homohurdefsa} and the Triangulation theorem that $h_{n,\mathcal{R}}$ is also a homomorphism for $n\geq 1$.
\end{obs}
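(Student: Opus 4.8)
Part (2) is the substantive claim; part (1) is the relative analogue of Proposition \ref{lema:homohurdefsa}, established by repeating the naturality-of-$\theta$ argument verbatim with the relative homotopy and homology functors in place of the absolute ones. So I concentrate on (2). The plan is to exhibit $h_{n,\mathcal{R}}$ as a composition of group homomorphisms; the only real work is to pass from the semialgebraic setting of Proposition \ref{lema:homohurdefsa} to an arbitrary definable pointed set, which is where the Triangulation Theorem enters.

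First I would carry out the reduction. Let $(X,x_0)$ be an arbitrary definable pointed set. By the Triangulation Theorem there is a simplicial complex $K$ --- which is semialgebraic, being a finite union of simplices in some $R^p$ --- together with a definable homeomorphism $\phi:(|K|,v_0)\to(X,x_0)$, for a suitable vertex $v_0$. Since $\pi_n(-)^{\mathcal{R}}$ and $H_n(-)^{\mathcal{R}}$ are functors, $\phi$ induces group isomorphisms $\phi_*$ on both. Because $h_{n,\mathcal{R}}$ is a natural transformation between these functors (as recalled just before Proposition \ref{lema:homohurdefsa}), its naturality square for $\phi$ commutes, giving $h_{n,\mathcal{R}}=\phi_*\circ h_{n,\mathcal{R}}\circ(\phi_*)^{-1}$, where the inner occurrence of $h_{n,\mathcal{R}}$ is now taken on the semialgebraic set $|K|$. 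As $\phi_*$ and $(\phi_*)^{-1}$ are homomorphisms, it suffices to prove the claim for $(|K|,v_0)$.

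For this semialgebraic pointed set Proposition \ref{lema:homohurdefsa} provides the commuting square $\theta\circ h_{n,\mathcal{R}_0}=h_{n,\mathcal{R}}\circ\rho$, whence $h_{n,\mathcal{R}}=\theta\circ h_{n,\mathcal{R}_0}\circ\rho^{-1}$. Here $\rho$ is a group isomorphism by Theorem \ref{rhoesismorfis}, so $\rho^{-1}$ is a homomorphism; $\theta$ is the natural isomorphism of the homology functors and hence a homomorphism on each $H_n$; and $h_{n,\mathcal{R}_0}$ is a homomorphism for $n\geq1$ by Theorem III.7.3 in \cite{85DK}. Therefore $h_{n,\mathcal{R}}$ on $|K|$ is a composition of group homomorphisms, hence itself a homomorphism; combined with the reduction, this proves (2). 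Note that the argument is uniform in $n\geq1$: since $h_{n,\mathcal{R}_0}$, $\rho$ and $\theta$ behave well already at $n=1$, no special treatment of the possibly non-abelian $\pi_1$ is needed.

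I do not expect a genuine obstacle, since the argument is a diagram chase. The two points demanding care are that $\rho$ is a group isomorphism and not merely a bijection --- exactly the content of Theorem \ref{rhoesismorfis} beyond the bijectivity of Theorem \ref{teo:princi} --- and that the triangulating map $\phi$, though only definable rather than semialgebraic, nonetheless induces group isomorphisms on $\pi_n(-)^{\mathcal{R}}$ and $H_n(-)^{\mathcal{R}}$ that are intertwined by $h_{n,\mathcal{R}}$ via its naturality, which is what legitimizes the reduction to the semialgebraic case.
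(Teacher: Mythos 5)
Your proposal is correct and follows exactly the route the paper intends: the Triangulation Theorem together with naturality of $h_{n,\mathcal{R}}$ reduces to the semialgebraic case $(|K|,v_0)$, where the commuting square of Proposition \ref{lema:homohurdefsa} exhibits $h_{n,\mathcal{R}}=\theta\circ h_{n,\mathcal{R}_0}\circ\rho^{-1}$ as a composition of homomorphisms, using Theorem \ref{rhoesismorfis} for $\rho$ and Theorem III.7.3 of \cite{85DK} for $h_{n,\mathcal{R}_0}$. The paper merely sketches this in one line; your write-up fills in precisely those details.
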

Recall the definition of the action of $\pi_1$ on $\pi_n$ defined in Properties \ref {propert}.3.
\begin{teo}[\textbf{The o-minimal Hurewicz theorems}]\label{teo:Hureabso}Let $(X,x_{0})$ be a definable pointed set and $n \geq 1$. Suppose that
$\pi_{r}(X,x_{0})^{\mathcal{R}}=0$ for  every $0\leq r \leq n-1$.
Then the o-minimal Hurewicz homomorphism $$h_{n,\mathcal{R}}:\pi_{n}(X,x_{0})^{\mathcal{R}}\rightarrow
H_{n}(X)^{\mathcal{R}}$$ is surjective and its kernel is the subgroup generated by $\{\beta_{[u]}([f])[f]^{-1}: [u]\in
\pi_{1}(X,x_{0})^{\mathcal{R}},[f]\in \pi_{n}(X,x_{0})^{\mathcal{R}}\}$. In particular, $h_{n,\mathcal{R}}$ is an isomorphism for $n\geq 2$.
\end{teo}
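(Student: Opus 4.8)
The plan is to reduce the o-minimal Hurewicz theorem to its semialgebraic counterpart, exactly as was done for the other homotopy-theoretic results in this paper. By the Triangulation Theorem together with Corollary \ref{cinvextexp}, I would first reduce to the case where $(X,x_0)$ is a semialgebraic pointed set; indeed, all the relevant objects --- the homotopy groups $\pi_r(X,x_0)^{\mathcal{R}}$, the homology $H_n(X)^{\mathcal{R}}$, and the Hurewicz map $h_{n,\mathcal{R}}$ --- are invariant under the triangulation, so we may replace $X$ by the realization of a simplicial complex defined over $\mathcal{R}_0$. The key organizing principle is the commutative square of Proposition \ref{lema:homohurdefsa}, which identifies $h_{n,\mathcal{R}}$ with $h_{n,\mathcal{R}_0}$ through the vertical isomorphisms $\rho$ (Theorem \ref{rhoesismorfis}) and $\theta$.

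\textbf{Key steps.} Assume now $X$ is semialgebraic. First I would translate the hypothesis: since $\rho:\pi_r(X,x_0)^{\mathcal{R}_0}\rightarrow \pi_r(X,x_0)^{\mathcal{R}}$ is a bijection for $r\geq 1$ by Theorem \ref{rhoesismorfis} (and $\pi_0$ counts definably connected components, which is also invariant), the assumption $\pi_r(X,x_0)^{\mathcal{R}}=0$ for $0\leq r\leq n-1$ is equivalent to $\pi_r(X,x_0)^{\mathcal{R}_0}=0$ for the same range. Hence the semialgebraic Hurewicz theorem applies to $X$. I would invoke it (the semialgebraic version, due to Delfs--Knebusch, or obtained by transfer through $X(\overline{\mathbb{Q}})$ and the classical Hurewicz theorem as in Corollary \ref{cor:homtopiadefigualtopo}) to conclude that $h_{n,\mathcal{R}_0}$ is surjective with kernel generated by the elements $\beta_{[u]}([f])[f]^{-1}$. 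Then I would chase the diagram of Proposition \ref{lema:homohurdefsa}: since $\theta$ and $\rho$ are isomorphisms and $h_{n,\mathcal{R}}\circ\rho=\theta\circ h_{n,\mathcal{R}_0}$, surjectivity of $h_{n,\mathcal{R}_0}$ transfers to surjectivity of $h_{n,\mathcal{R}}$, and $\ker h_{n,\mathcal{R}}=\rho(\ker h_{n,\mathcal{R}_0})$.

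\textbf{The main obstacle: matching the kernel descriptions.} The delicate point is verifying that $\rho$ carries the distinguished generating set of $\ker h_{n,\mathcal{R}_0}$ onto the distinguished generating set of $\ker h_{n,\mathcal{R}}$, i.e., that $\rho(\beta_{[u]}([f])[f]^{-1})=\beta_{\rho([u])}(\rho([f]))\rho([f])^{-1}$. This amounts to checking that $\rho$ is compatible with the $\pi_1$-action, that is, $\rho(\beta_{[u]}([f]))=\beta_{\rho([u])}(\rho([f]))$. I expect this to follow from the construction of $\beta$ recalled in Properties \ref{propert}.3: the action is defined through a definable homotopy $H$ whose existence is guaranteed by the o-minimal homotopy extension lemma (Lemma \ref{prop:ext.homotopia}), and since $\rho$ simply reinterprets a semialgebraic map as a definable one, a semialgebraic witness $H$ for $\beta_{[u]}([f])$ over $\mathcal{R}_0$ is also a definable witness over $\mathcal{R}$ for $\beta_{\rho([u])}(\rho([f]))$; the uniqueness (well-definedness) of the action on both sides then forces equality. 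Once this compatibility is established, the generating sets correspond under $\rho$, so $\ker h_{n,\mathcal{R}}$ is generated by $\{\beta_{[u]}([f])[f]^{-1}\}$ as stated. For $n\geq 2$ the group $\pi_1$ acts, but when additionally $\pi_1(X,x_0)^{\mathcal{R}}=0$ the generating set is trivial, so $h_{n,\mathcal{R}}$ is an isomorphism; and more generally for $n\geq 2$ the subgroup is the full ``action-commutator'' subgroup, recovering the classical statement.
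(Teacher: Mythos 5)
Your proposal is correct and follows essentially the same route as the paper: triangulate to reduce to a semialgebraic $X$, transfer the vanishing hypothesis and the Hurewicz map through the commutative square of Proposition \ref{lema:homohurdefsa} using the isomorphisms $\rho$ and $\theta$, and invoke the semialgebraic Hurewicz theorem of Delfs--Knebusch. The compatibility of the transfer maps with the $\pi_1$-action that you single out as the delicate point is exactly what the paper handles via Lemma \ref{lema:empujacurvas} (for $\phi_*$) and, as you observe, is immediate for $\rho$ since a semialgebraic witness for the action is also a definable one.
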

\begin{proof}Note that $X$ is definably connected since $\pi_0(X,x_0)^{\mathcal{R}}=0$. Let $(K,\phi)$ be a definable triangulation of
$X$ and $y_0=\phi^{-1}(x_0)$. Since $\pi_{r}(-)^{\mathcal{R}}$ is a covariant functor,
$\pi_{r}(|K|,y_{0})^{\mathcal{R}}=0$ for $0\leq r \leq n-1$. Moreover, as $\rho$ is a natural isomorphism, $\pi_{r}(|K|,y_{0})^{\mathcal{R}_{0}}\cong
\pi_{r}(|K|,y_{0})^{\mathcal{R}}=0$ for $0\leq r \leq n-1$. Since $h_{n,\mathcal{R}}$ is a natural transformation, the following diagram
\begin{displaymath}
\xymatrix{ \pi_{n}(|K|,y_{0})^{\mathcal{R}}
\ar[r]^{h_{n,\mathcal{R}}} \ar[d]_{\phi_{*}} &
     H_{n}(|K|)^{\mathcal{R}} \ar[d]^{\phi_*}\\
\pi_{n}(X,x_{0})^{\mathcal{R}} \ar[r]_{h_{n,\mathcal{R}}} &
H_{n}(X)^{\mathcal{R}}}
\end{displaymath}
commutes. Furthermore, since $\phi$ is a homeomorphism, the induced map $\phi_{*}$ in both homology and homotopy are isomorphism. Hence, by Lemma \ref{lema:empujacurvas}, it is enough to prove that
$h_{n,\mathcal{R}}:\pi_{n}(|K|,y_{0})^{\mathcal{R}}\rightarrow
H_{n}(|K|)^{\mathcal{R}}$ is surjective and that its kernel is the subgroup generated by $\{\beta_{[u]}([f])[f]^{-1}: [u]\in
\pi_{1}(|K|,y_{0})^{\mathcal{R}},[f]\in \pi_{n}(|K|,y_{0})^{\mathcal{R}}\}$. By Proposition \ref{lema:homohurdefsa}, the following diagram
\begin{displaymath}
\xymatrix{ \pi_{n}(|K|,y_{0})^{\mathcal{R}_{0}}
\ar[r]^{h_{n,\mathcal{R}_{0}}} \ar[d]_{\rho} &
     H_{n}(|K|)^{\mathcal{R}_{0}} \ar[d]^{\theta}\\
\pi_{n}(|K|,y_{0})^{\mathcal{R}} \ar[r]_{h_{n,\mathcal{R}}} &
H_{n}(|K|)^{\mathcal{R}}}
\end{displaymath}
commutes. Since $\rho$ and
$\theta$ are natural isomorphism, it is enough to prove that
$h_{n,\mathcal{R}_{0}}:\pi_{n}(|K|,y_{0})^{\mathcal{R}_{0}}\rightarrow
H_{n}(|K|)^{\mathcal{R}_{0}}$ is surjective and that its kernel is the subgroup generated by $\{\beta_{[u]}([f])[f]^{-1}: [u]\in
\pi_{1}(|K|,y_{0})^{\mathcal{R}_{0}},[f]\in
\pi_{n}(|K|,y_{0})\}^{\mathcal{R}_{0}}$. But this fact follows from the semialgebraic Hurewicz theorems (see Theorem III.7.4 in \cite{85DK}). Finally, the second part of the theorem follows immediately from the first one since for $n\geq 2$, by hypothesis, $\pi_1(X,x_{0})^{\mathcal{R}}=0$.
\end{proof}

\begin{teo}[The o-minimal relative Hurewicz theorems]\label{teo:Hurerel}Let $(X,A,x_{0})$ be a definable pointed pair and $n \geq 2$. Suppose that $\pi_{r}(X,A,x_{0})^{\mathcal{R}}=0$ for every $1\leq r \leq n-1$. Then the o-minimal Hurewicz homomorphism 
$h_{n,\mathcal{R}}:\pi_{n}(X,A,x_{0})^{\mathcal{R}}\rightarrow
H_{n}(X,A)^{\mathcal{R}}$ is surjective and its kernel is the subgroup generated by $\{\beta_{[u]}([f])[f]^{-1}: [u]\in
\pi_{1}(A,x_{0})^{\mathcal{R}},[f]\in \pi_{n}(X,A,x_{0})^{\mathcal{R}}\}$. In particular, $h_{n,\mathcal{R}}$ is an isomorphism for $n\geq 3$.
\end{teo}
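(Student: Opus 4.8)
The plan is to imitate, step for step, the proof of the absolute o-minimal Hurewicz theorem (Theorem \ref{teo:Hureabso}), replacing every ingredient by its relative analogue and transferring the whole problem to the semialgebraic setting via Theorem \ref{teo:princi}, so that in the end the statement is deduced from the semialgebraic relative Hurewicz theorem (Theorem III.7.4 in \cite{85DK}). The novelty with respect to the absolute case is purely bookkeeping: one must carry the distinguished set of kernel generators $\{\beta_{[u]}([f])[f]^{-1}\}$ through each comparison map.

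First I would triangulate the pair. By the Triangulation Theorem there is a definable triangulation $(K,\phi)$ of $X$ partitioning $A$; writing $K_A$ for the subcomplex with $\phi(|K_A|)=A$ and $y_0=\phi^{-1}(x_0)$, the map $\phi$ is a definable homeomorphism of pointed pairs $(|K|,|K_A|,y_0)\to (X,A,x_0)$. Since $\pi_r(-)^{\mathcal{R}}$ is a covariant functor and $\phi$ is a homeomorphism, the induced maps $\phi_{*}$ on relative homotopy and on relative homology are isomorphisms; moreover, by the evident relative analogue of Lemma \ref{lema:empujacurvas}, $\phi_{*}$ intertwines the $\pi_1(A,x_0)^{\mathcal{R}}$-action with the $\pi_1(|K_A|,y_0)^{\mathcal{R}}$-action, hence carries the set of kernel generators to the corresponding set. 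So it suffices to treat $(|K|,|K_A|,y_0)$, and in particular the vanishing hypothesis transfers to $\pi_r(|K|,|K_A|,y_0)^{\mathcal{R}}=0$ for $1\le r\le n-1$.

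Next I would descend to the semialgebraic level. By the relative version of Theorem \ref{rhoesismorfis}, $\rho$ is a natural isomorphism, so $\pi_r(|K|,|K_A|,y_0)^{\mathcal{R}_0}\cong \pi_r(|K|,|K_A|,y_0)^{\mathcal{R}}=0$ for $1\le r\le n-1$; this is precisely the semialgebraic $(n-1)$-connectivity needed downstairs. By the relative version of Proposition \ref{lema:homohurdefsa} (see Remark \ref{rhurewicz}(1)) the square comparing $h_{n,\mathcal{R}_0}$ with $h_{n,\mathcal{R}}$ through the natural isomorphisms $\rho$ and $\theta$ commutes, and since $\rho$ and $\theta$ are isomorphisms compatible with the respective $\pi_1$-actions, it is enough to prove surjectivity and the kernel description for $h_{n,\mathcal{R}_0}:\pi_n(|K|,|K_A|,y_0)^{\mathcal{R}_0}\to H_n(|K|,|K_A|)^{\mathcal{R}_0}$, applied to the complex with vertices in the real algebraic numbers. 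That is exactly the semialgebraic relative Hurewicz theorem (Theorem III.7.4 in \cite{85DK}). The final assertion, that $h_{n,\mathcal{R}}$ is an isomorphism for $n\ge 3$, is then read off from the very same transfer as the corresponding statement of Theorem III.7.4.

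The main obstacle I anticipate is not any new homotopy-theoretic content but the fidelity of this transport of the kernel. At each of the three reductions one must know that the comparison map ($\phi_{*}$, $\rho$, $\theta$) respects the action $\beta_{[u]}$, so that the subgroup generated by the elements $\beta_{[u]}([f])[f]^{-1}$ is sent onto the analogous subgroup and not merely into it; otherwise the kernel could shrink under transfer. The compatibility of $\phi_{*}$ is the relative analogue of Lemma \ref{lema:empujacurvas}, while that of $\rho$ and $\theta$ is their naturality together with the fact that $\beta_{[u]}$ is built, via Lemma \ref{prop:ext.homotopia}, out of definable homotopies that are preserved by these functors. Once these compatibilities are recorded, the remainder is a routine diagram chase reducing everything to \cite{85DK}.
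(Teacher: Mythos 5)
Your proposal is correct and follows essentially the same route as the paper: the paper's proof of Theorem \ref{teo:Hurerel} simply says to adapt the proof of the absolute case (triangulate, transfer via the relative versions of Theorem \ref{rhoesismorfis} and Proposition \ref{lema:homohurdefsa}, and invoke Theorem III.7.4 of \cite{85DK}), noting explicitly that the relative analogue of Lemma \ref{lema:empujacurvas} is needed to transport the kernel generators. You have spelled out exactly that adaptation, including the one point the paper singles out as requiring care.
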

\begin{proof}It is enough to adapt the proof of the o-minimal absolute Hurewicz theorems to the relative case. Note that at some point, we need the relative version of Lemma \ref{lema:empujacurvas} (whose proof is similar), i.e., that given a definable map $\psi:(X,A,x_{0}) \rightarrow (Y,B,y_{0})$ and $[u]\in \pi_{1}(A,x_{0})^{\mathcal{R}}$, we have that $\psi_{*}(\beta_{[u]}([f]))=\beta_{\psi_{*}([u])}(\psi_{*}([f]))$ for all $[f]\in \pi_{n}(X,A,x_0)^{\mathcal{R}}$.
\end{proof}
\begin{obs}1) With the hypotheses of Theorem \ref{teo:Hureabso}, for $n=1$, we have that $\text{Ker}(h_{1,\mathcal{R}})$ is the subgroup generated by
$\{\beta_{[u]}([v])[v]^{-1}: [u]\in
\pi_{1}(X,x_{0})^{\mathcal{R}},[v]\in \pi_{1}(X,x_{0})^{\mathcal{R}}\}$. On the other hand, using the definable homotopy $H(t,s)=u(ts)v(t)u(s-ts)$, we have that $\beta_{[u]}([v])[v]^{-1}=[u][v][u]^{-1}[v]^{-1}$. Hence, $\text{Ker}(h_{1,\mathcal{R}})$ is the commutator of $\pi_{1}(X,x_{0})^{\mathcal{R}}$. In particular, if \linebreak $\pi_{1}(X,x_{0})^{\mathcal{R}}$ is abelian then $h_{1,\mathcal{R}}$ is an isomorphism. This result was already proved in Theorem 5.1 in \cite{04EO}.\\
2)  With the hypotheses of Theorem \ref{teo:Hurerel}, $\pi_2(X,A,x_0)^\mathcal{R}/\text{Ker}(h_{2,\mathcal{R}}) \linebreak \cong H_2(X,A)^\mathcal{R}$ is abelian and therefore $\text{Ker}(h_{2,\mathcal{R}})$ contains the commutator subgroup of $\pi_2(X,A,x_0)^\mathcal{R}$. This fact can also be shown directly by proving that for every $[f],[g]\in \pi_2(X,A,x_0)^\mathcal{R}$, $[g][f][g]^{-1}=\beta_{[u]}([f])$,
where $u(t)=g(t,0)$ for $t\in I$.
\end{obs}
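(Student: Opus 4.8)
The plan is to treat the two parts of the remark separately, in each case reading off the generators of the relevant kernel from the Hurewicz theorems already proved and then recognising those generators as commutators by means of explicit definable homotopies. For part 1) the first assertion is literally the content of Theorem~\ref{teo:Hureabso} specialised to $n=1$: the hypothesis $\pi_r(X,x_0)^{\mathcal R}=0$ for $0\le r\le n-1$ reduces to $\pi_0(X,x_0)^{\mathcal R}=0$ (so $X$ is definably connected), and the theorem then says that $h_{1,\mathcal R}$ is onto with $\text{Ker}(h_{1,\mathcal R})$ generated by the elements $\beta_{[u]}([v])[v]^{-1}$, $[u],[v]\in\pi_1(X,x_0)^{\mathcal R}$. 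The one thing left to prove is the identity $\beta_{[u]}([v])[v]^{-1}=[u][v][u]^{-1}[v]^{-1}$. Recall from Properties~\ref{propert}.3 that $\beta_{[u]}([v])=[H(\cdot,1)]$ for any definable homotopy $H$ with $H(t,0)=v(t)$ and $H(0,s)=H(1,s)=u(s)$. I would take the explicit $H(t,s)=u(ts)\,v(t)\,u(s-ts)$ recorded in the statement (juxtaposition denoting concatenation of loops), check that it is definable (being assembled from the definable loops $u,v$ by composition with the semialgebraic reparametrisations $(t,s)\mapsto ts$ and $(t,s)\mapsto s-ts$ of $I\times I$), and verify its boundary values, so that $H(\cdot,1)=u\cdot v\cdot\bar u$ represents $\beta_{[u]}([v])=[u][v][u]^{-1}$. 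Multiplying by $[v]^{-1}$ gives the commutator, and as $[u],[v]$ range over $\pi_1(X,x_0)^{\mathcal R}$ these commutators generate the commutator subgroup; hence $\text{Ker}(h_{1,\mathcal R})=[\pi_1(X,x_0)^{\mathcal R},\pi_1(X,x_0)^{\mathcal R}]$. When $\pi_1(X,x_0)^{\mathcal R}$ is abelian this kernel is trivial and the surjection $h_{1,\mathcal R}$ becomes an isomorphism.

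For part 2) I would give both arguments indicated. The abstract one uses Theorem~\ref{teo:Hurerel} with $n=2$, whose hypothesis is $\pi_1(X,A,x_0)^{\mathcal R}=0$ and whose conclusion includes the surjectivity of $h_{2,\mathcal R}$; the first isomorphism theorem then yields $\pi_2(X,A,x_0)^{\mathcal R}/\text{Ker}(h_{2,\mathcal R})\cong H_2(X,A)^{\mathcal R}$. Since the homology group on the right is abelian, the quotient is abelian, and this is equivalent to $\text{Ker}(h_{2,\mathcal R})$ containing the commutator subgroup of $\pi_2(X,A,x_0)^{\mathcal R}$.

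For the direct argument in part 2) I would show that conjugation in $\pi_2(X,A,x_0)^{\mathcal R}$ coincides with the $\pi_1(A,x_0)^{\mathcal R}$-action of Properties~\ref{propert}.3. Given $[f],[g]\in\pi_2(X,A,x_0)^{\mathcal R}$, put $u(t)=g(t,0)$; this is a loop in $A$ based at $x_0$, since $g(I^1)\subset A$ and the endpoints of $I^1$ lie in $J^1$ and are therefore sent to $x_0$ (indeed $[u]=\partial[g]$ in the notation of Properties~\ref{propert}.2). The task is then to construct an explicit definable homotopy realising $[g][f][g]^{-1}=\beta_{[u]}([f])$, the relative counterpart of the computation of part 1). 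Granting it, $[g][f][g]^{-1}[f]^{-1}=\beta_{[u]}([f])[f]^{-1}$ is one of the generators of $\text{Ker}(h_{2,\mathcal R})$ supplied by Theorem~\ref{teo:Hurerel}, so every commutator lies in the kernel and we recover the inclusion obtained abstractly above.

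The arguments are the classical ones throughout, so the only genuine point of care, and the place where I expect the (mild) main obstacle, is definability: every homotopy invoked must itself be a definable map. This holds because each is built from the definable data $u,v,f,g$ by composition with semialgebraic reparametrisations of cubes, and definable maps are closed under such operations; crucially no appeal to the compact-open topology on path spaces is needed. Once this is settled, checking the boundary and endpoint conditions of the explicit formulas is routine, and the two parts of the remark follow.
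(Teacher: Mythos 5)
Your proposal is correct and follows essentially the same route as the paper's own (largely self-contained) justification: part 1) is Theorem \ref{teo:Hureabso} at $n=1$ combined with the explicit basepoint-dragging homotopy $H(t,s)=u(ts)v(t)u(s-ts)$ identifying $\beta_{[u]}([v])[v]^{-1}$ with the commutator $[u][v][u]^{-1}[v]^{-1}$, and part 2) is the first isomorphism theorem plus abelianness of $H_2(X,A)^{\mathcal{R}}$, with the direct alternative via $[g][f][g]^{-1}=\beta_{[\partial g]}([f])$. Your reading of the juxtaposition in $H$ as concatenation (forced, since $X$ carries no group structure) and your attention to definability of the reparametrised homotopies are exactly the points the paper implicitly relies on.
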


We finish this section with the proof of the o-minimal Whitehead theorem. We say that a definable map $\psi:X\rightarrow Y$ is a \textbf{definable homotopy equivalence} if there exist a definable map $\psi':Y\rightarrow X$ such that $\psi\circ \psi'\thicksim \textrm{id}_{Y}$ and $\psi'\circ \psi \thicksim \textrm{id}_{X}$. Note that if a definable map $\psi$ is a definable homotopy equivalence then it is a definable homotopy equivalence relative to a point. Indeed, it suffices to adapt the classical proof using Lemma \ref{prop:ext.homotopia} instead of the lifting property (see Proposition 0.19 in \cite{02H}).

\begin{teo}[\textbf{The o-minimal Whitehead theorem}]\label{whitehead}Let $X$ and $Y$ be two definably connected sets. Let $\psi:X\rightarrow Y$ be a definable map such that for some $x_0\in X$, $\psi_{*}:\pi_{n}(X,x_{0})^{\mathcal{R}}\rightarrow \pi_{n}(Y,\psi(x_{0}))^{\mathcal{R}}$ is an isomorphism for all $n\geq 1$. Then $\psi$ is a definable homotopy equivalence.
\end{teo}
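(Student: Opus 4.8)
The plan is to transfer the statement to the semialgebraic setting, exactly as was done for the o-minimal Hurewicz theorems, and then invoke the semialgebraic Whitehead theorem of Delfs and Knebusch. First I would reduce to semialgebraic data. By the Triangulation Theorem both $X$ and $Y$ are definably homeomorphic to realizations of simplicial complexes with vertices in the real algebraic numbers; since a definable homeomorphism is in particular a definable homotopy equivalence, I may assume that $X$ and $Y$ are semialgebraic. Next, by Corollary \ref{cor:sinpar} the definable map $\psi$ is definably homotopic to a semialgebraic map $\psi'\colon X\to Y$. Because being a definable homotopy equivalence depends only on the definable homotopy class, and because freely homotopic maps induce the same map on homotopy groups up to the change-of-base-point isomorphisms, it suffices to prove that $\psi'$ is a definable homotopy equivalence, and moreover $\psi'_*\colon\pi_n(X,x_0)^{\mathcal{R}}\to\pi_n(Y,\psi'(x_0))^{\mathcal{R}}$ is still an isomorphism for every $n\geq 1$ (here I use that $X$ and $Y$ are definably connected, so that all the base-point bookkeeping is harmless).

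Now I would push the hypothesis across $\rho$. By Theorem \ref{rhoesismorfis} the natural map $\rho\colon\pi_n(-)^{\mathcal{R}_0}\to\pi_n(-)^{\mathcal{R}}$ is an isomorphism, and the naturality square for the semialgebraic map $\psi'$ identifies the semialgebraic $\psi'_*$ with the conjugate $\rho^{-1}\circ\psi'_*\circ\rho$ of the o-minimal one; hence $\psi'_*\colon\pi_n(X,x_0)^{\mathcal{R}_0}\to\pi_n(Y,\psi'(x_0))^{\mathcal{R}_0}$ is an isomorphism for every $n\geq 1$. At this point I would apply the semialgebraic Whitehead theorem of \cite{85DK} to the semialgebraic map $\psi'$ between the semialgebraically connected sets $X$ and $Y$: it yields a semialgebraic map $\psi''\colon Y\to X$ and semialgebraic homotopies $\psi'\circ\psi''\thicksim\mathrm{id}_Y$ and $\psi''\circ\psi'\thicksim\mathrm{id}_X$. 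Since every semialgebraic map is definable in $\mathcal{R}$ and every semialgebraic homotopy is a definable homotopy, $\psi''$ is a definable homotopy inverse of $\psi'$. Thus $\psi'$, and therefore $\psi$, is a definable homotopy equivalence.

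The only genuinely delicate points on this route are the reductions of the first paragraph: one must check that the free definable homotopy $\psi\thicksim\psi'$ provided by Corollary \ref{cor:sinpar} does not destroy the isomorphism hypothesis (the standard change-of-base-point argument, legitimate here because $X$ and $Y$ are definably connected), and that the semialgebraic Whitehead theorem of \cite{85DK} applies verbatim to our affine semialgebraic $X$ and $Y$. Everything else is formal.

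Should a direct, self-contained argument be preferred, I would instead adapt the classical proof (Theorem 4.5 in \cite{02H}) to the o-minimal category. Form the definable mapping cylinder $M_\psi$ of $\psi$ (definable, e.g. after replacing $\psi$ by a simplicial map); then $\psi$ factors as the inclusion $i\colon X\hookrightarrow M_\psi$ followed by the definable strong deformation retraction $M_\psi\to Y$, so it suffices to show that $i$ is a definable homotopy equivalence. Since $i_*$ is then an isomorphism in every degree, the exactness property (Properties \ref{propert}.2) forces $\pi_n(M_\psi,X,x_0)^{\mathcal{R}}=0$ for all $n\geq 1$, and the task reduces to an o-minimal compression lemma: a definable pointed pair $(Z,A,x_0)$ with $\pi_n(Z,A,x_0)^{\mathcal{R}}=0$ for all $n\geq 1$ admits a definable strong deformation retraction of $Z$ onto $A$. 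I expect this compression lemma to be the main obstacle. I would prove it by triangulating $(Z,A)$ so that $A$ is a subcomplex and inducting over the finitely many skeleta: over the closure of each simplex $\sigma\subset Z\setminus A$ the vanishing of the relevant relative homotopy group compresses the identity into $A$ relative to the already-retracted frontier, and the o-minimal homotopy extension lemma (Lemma \ref{prop:ext.homotopia}) extends each partial homotopy; the finiteness of the triangulation lets me splice these definable homotopies into a single definable deformation retraction, which is precisely where Lemma \ref{prop:ext.homotopia} does the work played classically by the homotopy extension property.
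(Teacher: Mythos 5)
Your proposal is correct and follows essentially the same route as the paper: triangulate, replace $\psi$ by a semialgebraic map in its definable homotopy class, transfer the isomorphism hypothesis through $\rho$ (Theorem \ref{rhoesismorfis}), apply the semialgebraic Whitehead theorem of Delfs--Knebusch, and note that the semialgebraic homotopy inverse is in particular a definable one. The only cosmetic difference is that the paper obtains the semialgebraic replacement directly from Theorem \ref{teo:princi} applied to the pointed pairs $(|K|,x_1)$ and $(|L|,y_1)$, so the homotopy is basepoint-preserving and the change-of-basepoint bookkeeping you invoke (via Corollary \ref{cor:sinpar} and a free homotopy) is not needed.
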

\begin{proof}Let $(K,\phi_{1})$ and $(L,\phi_{2})$ be definable respective triangulations of $X$ and $Y$. Consider the points $x_1=\phi_1^{-1}(x_{0})$ and $y_1=\phi_2^{-1}(\psi(x_{0}))$. It suffices to prove that the definable map
$\widetilde{\psi}=\phi_{2}^{-1}\circ \psi \circ \phi_{1}:|K|\rightarrow |L|$
is a definable homotopy equivalence provided $\widetilde{\psi}_*:\pi_n(|K|,x_1)^{\mathcal{R}}\rightarrow \pi_n(|L|,y_1)^{\mathcal{R}}$ is an isomorphism for all $n\geq 1$.
By Theorem \ref{teo:princi} there exists a semialgebraic map $\varphi:(|K|,x_1)\rightarrow (|L|,y_1)$ such that $\varphi\thicksim \widetilde{\psi}$. 
By the homotopy property it follows that $\varphi_{*}=\widetilde{\psi}_{*}:\pi_{n}(|K|,x_1)^{\mathcal{R}}\rightarrow \pi_{n}(|L|,y_1)^{\mathcal{R}}$
is an isomorphism for all $n\geq 1$. Therefore by Theorem \ref{rhoesismorfis}, $\varphi_{*}:\pi_{n}(|K|,x_1)^{\mathcal{R}_{0}}\rightarrow \pi_{n}(|L|,y_1)^{\mathcal{R}_{0}}$
is an isomorphism for all $n\geq 1$. Hence, by Theorem III.6.6 in \cite{85DK}, $\varphi$ is a semialgebraic homotopy equivalence, that is, there exists a semialgebraic map $\varphi':|L|\rightarrow |K|$ such that $\textrm{id}_{|K|}\thicksim_0\varphi'\circ \varphi$ and $\textrm{id}_{|L|}\thicksim_0\varphi\circ \varphi'$, where $\thicksim_0$ means ``semialgebraically homotopic''. Hence $\textrm{id}_{|K|}\thicksim_0\varphi'\circ \varphi \thicksim \varphi' \circ \widetilde{\psi}$ and so $\textrm{id}_{|K|} \thicksim \varphi' \circ \widetilde{\psi}$.
In a similar way we prove that  $\textrm{id}_{|L|}\thicksim\widetilde{\psi}\circ \varphi'$. Therefore $\widetilde{\psi}$ is a definable homotopy equivalence, as required.
\end{proof}
\begin{cor}\label{cor:contrac}Let $X$ be a definable set and let $x_0\in X$. If $\pi_n(X,x_0)^{\mathcal{R}}=0$ for all $n\geq 0$ then $X$ is definably contractible.
\end{cor}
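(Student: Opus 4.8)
The plan is to deduce this directly from the o-minimal Whitehead theorem (Theorem \ref{whitehead}) by comparing $X$ with a one-point space. First I would observe that since $\pi_0(X,x_0)^{\mathcal{R}}=0$, the set $X$ is definably connected, so the connectedness hypotheses of Theorem \ref{whitehead} on source and target can both be arranged.

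Next I would introduce the singleton $\{x_0\}$, which is a definable and (trivially) definably connected set, together with the inclusion map $\iota:\{x_0\}\rightarrow X$. Since every definable map into a single point is the constant map, it is immediate that $\pi_n(\{x_0\},x_0)^{\mathcal{R}}=0$ for all $n\geq 1$. Consequently the induced homomorphisms $\iota_{*}:\pi_n(\{x_0\},x_0)^{\mathcal{R}}\rightarrow \pi_n(X,x_0)^{\mathcal{R}}$ are maps between trivial groups --the left-hand side trivial by the previous remark, the right-hand side trivial by hypothesis-- and are therefore isomorphisms for every $n\geq 1$.

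Applying Theorem \ref{whitehead} to $\iota$ then yields that $\iota$ is a definable homotopy equivalence: there is a definable map $\iota':X\rightarrow\{x_0\}$ (necessarily the constant map at $x_0$) such that $\iota\circ\iota'\thicksim \textrm{id}_{X}$ and $\iota'\circ\iota\thicksim\textrm{id}_{\{x_0\}}$. The first of these relations says precisely that $\textrm{id}_{X}$ is definably homotopic to the constant map sending all of $X$ to $x_0$, which is exactly the statement that $X$ is definably contractible.

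I do not expect any genuine obstacle here, as the corollary is essentially a formal consequence of Whitehead's theorem; the only point meriting a word of justification is the triviality of the higher homotopy groups of the one-point space, and this follows at once from the existence of a unique definable map from any definable set into $\{x_0\}$.
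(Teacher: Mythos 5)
Your proof is correct and is essentially the paper's own argument: the paper likewise invokes Theorem \ref{whitehead} with the one-point space (phrased as ``applied to a constant map''), the only cosmetic difference being that you use the inclusion $\{x_0\}\rightarrow X$ rather than the constant map $X\rightarrow\{x_0\}$, which yields the same conclusion since the homotopy inverse produced by Whitehead in either direction gives the contracting homotopy.
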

\begin{proof}It follows from Theorem \ref{whitehead} applied to a constant map.
\end{proof}
Next result follows the transfer approach developed in \cite{03BeO}.
\begin{cor}Let $X$ be a semialgebraic set defined without parameters. Then $X$ is definably contractible if and only if $X(\mathbb{R})$ is contractible in the classical sense. 
\end{cor}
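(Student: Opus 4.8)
The plan is to reduce both implications to the vanishing of all homotopy groups and then transfer this vanishing between the o-minimal and the classical settings via Corollary \ref{cor:homtopiadefigualtopo}. First I would record, on each side, the standard equivalence between contractibility and the simultaneous vanishing of $\pi_0$ and of all higher homotopy groups. In the o-minimal direction one implication is exactly Corollary \ref{cor:contrac}: if $\pi_n(X,x_0)^{\mathcal{R}}=0$ for all $n\geq 0$ then $X$ is definably contractible. Conversely, if $X$ is definably contractible then the constant map is a definable homotopy equivalence, so by the homotopy property (Properties \ref{propert}.1) the induced maps on $\pi_n(-)^{\mathcal{R}}$ are isomorphisms and hence all these groups vanish. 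On the classical side, note that $X(\mathbb{R})$ is a semialgebraic subset of some $\mathbb{R}^m$, hence triangulable and thus homotopy equivalent to a finite CW complex; therefore the classical Whitehead theorem gives that $X(\mathbb{R})$ is contractible if and only if $\pi_n(X(\mathbb{R}),x_0)=0$ for all $n\geq 0$.

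With these two reformulations in place, I would fix a base point $x_0\in X(\mathbb{R})\subset X$ and invoke Corollary \ref{cor:homtopiadefigualtopo} to obtain natural isomorphisms $\pi_n(X(\mathbb{R}),x_0)\cong \pi_n(X(R),x_0)^{\mathcal{R}}$ for every $n\geq 1$. Thus the higher o-minimal homotopy groups of $X$ all vanish if and only if the higher classical homotopy groups of $X(\mathbb{R})$ all vanish, and it only remains to match the case $n=0$.

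The main point not covered by Corollary \ref{cor:homtopiadefigualtopo} is this $\pi_0$, i.e. the equivalence between $X$ being definably connected and $X(\mathbb{R})$ being connected. I would settle it by triangulation: by the Triangulation Theorem choose a triangulation $(K,\phi)$ of $X$ with $K$ defined without parameters (vertices in $\overline{\mathbb{Q}}$). Definable connectedness of $X\cong|K|$ is a purely combinatorial condition on $K$ (connectedness of the incidence graph of its simplices), and the same combinatorial condition detects the topological connectedness of $|K(\mathbb{R})|=X(\mathbb{R})$; hence $X$ is definably connected if and only if $X(\mathbb{R})$ is connected. Combining this with the previous paragraph, all the groups $\pi_n(X,x_0)^{\mathcal{R}}$, $n\geq 0$, vanish if and only if all the groups $\pi_n(X(\mathbb{R}),x_0)$, $n\geq 0$, vanish, and the two contractibility reformulations then yield the desired equivalence.

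The only real subtlety I anticipate is the careful treatment of the base point and of the case $n=0$, since Corollary \ref{cor:homtopiadefigualtopo} is stated for $n\geq 1$; the triangulation argument above is precisely what bridges that gap. I would also note the harmless standing assumption that $X$ be nonempty, so that a base point exists and the word ``contractible'' is meaningful on both sides.
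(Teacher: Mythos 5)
Your proposal is correct and follows essentially the same route as the paper, which simply cites Corollary \ref{cor:homtopiadefigualtopo} together with Corollary \ref{cor:contrac}; you have merely spelled out the details (the classical Whitehead step, the $\pi_0$ matching via triangulation, and the base point) that the paper leaves implicit.
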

\begin{proof}It follows from Corollary \ref{cor:homtopiadefigualtopo} and Corollary \ref{cor:contrac}.
\end{proof}

\vspace{0.6cm}
\hspace{-0.6cm}\begin{scriptsize}DEPARTAMENTO DE MATEM\'ATICAS, UNIVERSIDAD AUT\'ONOMA DE MADRID, 28049, MADRID, SPAIN.\end{scriptsize}\\
\begin{scriptsize}\textit{E-mail address}: \texttt{elias.baro@uam.es}\end{scriptsize}\\

\hspace{-0.6cm}\begin{scriptsize}DEPARTAMENTO DE MATEM\'ATICAS, UNIVERSIDAD AUT\'ONOMA DE MADRID, 28049, MADRID, SPAIN.\end{scriptsize}\\
\begin{scriptsize}\textit{E-mail address}: \texttt{margarita.otero@uam.es}\end{scriptsize}\\

\begin{thebibliography}{99}
\bibitem[1]{07pB} E. Baro, Normal triangulations in o-minimal structures, preprint, 15pp., 2007, \texttt{www.uam.es/elias.baro/articulos.html}.
\bibitem[2]{02BeO} A. Berarducci and M. Otero, o-minimal fundamental group, homology and
manifolds,  J. London Math. Soc. (2)  65  (2002),  no. 2, 257--270.
\bibitem[3]{03BeO} A. Berarducci and M. Otero, Transfer methods for o-minimal topology,  J. Symb. Log. 68 (2003) 785--794.
\bibitem[4]{84DK} H. Delfs and M. Knebusch, Separation, retractions and homotopy extension in semialgebraic spaces, Pacific J. Math.  114  (1984),  no. 1, 47--71.
\bibitem[5]{85DK} H. Delfs and M. Knebusch, \emph{Locally semialgebraic
spaces}, Lecture Notes in Mathematics, 1173, Springer-Verlag, Berlin, 1985.
\bibitem[6]{04EO} M. Edmundo and M. Otero, Definably compact abelian groups,  J. Math. Log.  4  (2004),  no. 2, 163--180. 
\bibitem[7]{02H} A. Hatcher, \textit{Algebraic topology}, Cambridge University Press, 2002.
\bibitem[8]{59Hu} S. Hu, \emph{Homotopy theory}, Pure and Applied Mathematics, Vol. VIII Academic Press, New York-London 1959.
\bibitem[9]{96W} A. Woerheide, O-minimal homology, PhD Thesis, University of Illinois at Urbana-Champaign, 1996.
\end{thebibliography}
\end{document}